\newtheorem{theorem}[equation]{Theorem}
\newtheorem{lemma}[equation]{Lemma}
\newtheorem{proposition}[equation]{Proposition}
\newtheorem{corollary}[equation]{Corollary}
\theoremstyle{definition}
\newtheorem{definition}[equation]{Definition}
\newtheorem{tablesymb}[equation]{Table of Symbols}
\newtheorem{example}[equation]{Example}
\theoremstyle{remark}
\newtheorem{remark}[equation]{Remark}
\numberwithin{equation}{subsection}
\newcommand{\FF}{\mathbb{F}}
\newcommand{\ZZ}{\mathbb{Z}}
\newcommand{\QQ}{\mathbb{Q}}
\newcommand{\RR}{\mathbb{R}}
\newcommand{\LL}{\mathbb{L}}
\newcommand{\TT}{\mathbb{T}}
\newcommand{\GG}{\mathbb{G}}
\newcommand{\CC}{\mathbb{C}}
\newcommand{\KK}{\mathbb{K}}
\newcommand{\NN}{\mathbb{N}}
\newcommand{\ba}{\mathbf{a}}
\newcommand{\bff}{\mathbf{f}}
\newcommand{\bg}{\mathbf{g}}
\newcommand{\bh}{\mathbf{h}}
\newcommand{\bk}{\mathbf{k}}
\newcommand{\bmm}{\mathbf{m}}
\newcommand{\bn}{\mathbf{n}}
\newcommand{\bA}{\mathbf{A}}
\newcommand{\bx}{\mathbf{x}}
\newcommand{\bu}{\mathbf{u}}
\newcommand{\bv}{\mathbf{v}}
\newcommand{\bC}{\mathbf{C}}
\newcommand{\bz}{\mathbf{z}}
\newcommand{\bj}{\mathbf{j}}
\newcommand{\bw}{\mathbf{w}}
\newcommand{\bY}{\mathbf{Y}}
\newcommand{\cM}{\mathcal{M}}
\newcommand{\cN}{\mathcal{N}}
\newcommand{\cF}{\mathcal{F}}
\DeclareMathAlphabet{\matheur}{U}{eur}{m}{n}
\newcommand{\fs}{\mathfrak{s}}
 \DeclareMathOperator{\Lie}{Lie}
\DeclareMathOperator{\Ker}{Ker} 
\DeclareMathOperator{\Mat}{Mat} 
\DeclareMathOperator{\End}{End} 
\DeclareMathOperator{\Spec}{Spec}
 \DeclareMathOperator{\wt}{wt}
\DeclareMathOperator{\Ext}{Ext}  %\DeclareMathOperator{\exp}{exp}
\DeclareMathOperator{\Li}{Li}
\DeclareMathOperator{\fLi}{\mathfrak{Li}}
\DeclareMathOperator{\dep}{dep}
\DeclareMathOperator{\Span}{Span}
\newcommand{\oK}{\overline{K}}
\newcommand{\tr}{\mathrm{tr}}
\newcommand{\Lis}{\Li^{\star}}
\newcommand{\fLis}{\mathfrak{Li}^{\star}}
\newcommand{\zetaAmot}{\zeta_{A}^{\text{\tiny{\rm{AT}}}}}
\newcommand{\power}[2]{{#1 [\![ #2 ]\!]}}
\newcommand{\laurent}[2]{{#1 (\!( #2 )\!)}}
\definecolor{ForestGreen}{rgb}{0.0, 0.5, 0.0}
\newcommand{\C}{\ensuremath \mathbb{C}}
\newcommand{\Z}{\ensuremath \mathbb{Z}}
\newcommand{\N}{\ensuremath \mathbb{N}}
\newcommand{\F}{\ensuremath \mathbb{F}}
\newcommand{\bc}{\mathbf{c}}
\newcommand{\bd}{\mathbf{d}}
\newcommand{\inv}{\ensuremath ^{-1}}
\newcommand{\pinv}{\ensuremath ^{'-1}}
\DeclareMathOperator{\Exp}{Exp}
\DeclareMathOperator{\Log}{Log}
\DeclareMathOperator{\ord}{ord}
\newcommand{\twist}{^{(1)}}
\newcommand{\invtwist}{^{(-1)}}
\newcommand{\twistinv}{^{(-1)}}
\newcommand{\twisti}{^{(i)}}
\newcommand{\twistk}[1]{^{(#1)}}
\def\XXint#1#2#3{{\setbox0=\hbox{$#1{#2#3}{\int}$ }
\vcenter{\hbox{$#2#3$ }}\kern-.6\wd0}}
\title[Taylor coefficients of Anderson-Thakur series]{Taylor coefficients of Anderson-Thakur series and explicit formulae}
\author{Chieh-Yu Chang}
\address{Department of Mathematics, National Tsing Hua University, Hsinchu City 30042, Taiwan
  R.O.C.}
\email{cychang@math.nthu.edu.tw}
\author{Nathan Green}
\address{Department of Mathematics, University of California at San Diego, San Diego, CA 92093, USA}
\email{n2green@ucsd.edu}
\author{Yoshinori Mishiba}
\address{Department of Mathematical Sciences, University of the Ryukyus, 1 Senbaru, Nishihara-cho, Okinawa 903-0213, Japan}
\email{mishiba@sci.u-ryukyu.ac.jp}
\thanks{The first author was partially supported by MOST Grant
  107-2628-M-007-002-MY4 . The third author was supported by JSPS KAKENHI Grant Number JP18K13398.  This project was partially supported by JSPS Bilateral Open Partnership Joint Research Projects. }
\begin{document}

\begin{abstract}
For each positive characteristic multiple zeta value (defined by Thakur~\cite{T04}), the first and third authors  in~\cite{CM17b} constructed a $t$-module together with an algebraic point such that a specified coordinate of the logarithmic vector  of the algebraic point is a rational multiple of that multiple zeta value. The objective of this paper is to use the Taylor coefficients of Anderson-Thakur series and $t$-motivic Carlitz multiple star polylogarithms  to give explicit formulae for all of the coordinates of this logarithmic vector.
\end{abstract}

\subjclass[2020]{Primary 11M38; Secondary 11G09}
\keywords{Multiple zeta valutes, Anderson-Thakur series, t-motivic Carlitz multiple star polylogarithms, t-modules, logarithms of t-modules, t-motives}

\date{\today}
\maketitle

\setcounter{tocdepth}{1}
\tableofcontents

\section{Introduction}

In~\cite{CM17b}, the first and third authors of the present paper verify a function field analogue of Furusho's conjecture, asserting that the $p$-adic multiple zeta values (abbreviated as MZV's) satisfy the same $\mathbb{Q}$-linear relations that the corresponding real-valued MZV's satisfy. The key technique in that paper is to show that any $\infty$-adic MZV can be realized as a coordinate of the logarithm of an explicitly constructed $t$-module at a specific algebraic point. The main purpose of this paper is to give explicit formulae for all the coordinates of such a logarithmic vector in terms of Taylor coefficients of Anderson-Thakur series and $t$-motivic Carlitz multiple star polylogarithms. 

\subsection{Positive characteristic MZV's}  Let $t$ and $\theta$ be independent variables. Let $A:=\FF_{q}[\theta]$ be the polynomial ring in $\theta$ over a finite field $\FF_{q}$ with quotient field $K := \F_q(\theta)$. Let $K_{\infty}$ be the completion of $K$ with respect to the normalized absolute value $|\cdot|_{\infty}$ associated to the infinite place $\infty$, and let $\CC_{\infty}$ be the $\infty$-adic completion of a fixed algebraic closure of $K_{\infty}$.  Let $\overline{K}$ be the algebraic closure of $K$ inside $\CC_{\infty}$. Thakur~\cite{T04} defined positive characteristic multiple zeta values associated to $A$ as follows.  For any $r$-tuple of positive integers $\fs=(s_{1},\ldots,s_{r})\in \NN^{r}$, define
\begin{equation}\label{MZVdef}
\zeta_{A}(\fs):=\sum \frac{1}{a_{1}^{s_1}\cdots a_{r}^{s_r}}\in K_{\infty},
\end{equation}
where the sum is over all $r$-tuples of monic polynomials $a_{1},\ldots,a_{r}$ in $A$ with the restriction $|a_{1}|_{\infty}> |a_{2}|_{\infty}>\cdots>|a_{r}|_{\infty}$. The weight and the depth of $\zeta_{A}(\fs)$ are defined by  $\wt(\fs):=\sum_{i=1}^{r} s_{i}$ and $\dep(\fs):=r$ respectively. When $r=1$, the special values above are called Carlitz zeta values, due to Carlitz~\cite{Ca35}.

The classical real-valued MZV's
\[ \zeta(\fs):=\sum_{n_{1}>\cdots>n_{r}\geq 1}\frac{1}{n_{1}^{s_{1}}\cdots n_{r}^{s_{r}}} \in \RR^{\times}\]
for $\fs=(s_{1},\ldots,s_{r})\in \NN^{r}$ with $s_{1}\geq 2$,  are generalizations of special values of the Riemann $\zeta$-function at positive integers.  They occur as periods of mixed Tate motives and have many interesting connections between different research areas (see~\cite{An04, BGF18, Zh16}). Note that $\zeta(\fs)$ is the specialization at $z=1$ of the one-variable multiple polylogarithm 
\[ \sum_{n_{1}>\cdots>n_{r}\geq 1}\frac{z^{n_{1}}}{n_{1}^{s_{1}}\cdots n_{r}^{s_{r}}}  .\] In the function field setting,  this ``logarithmic" interpretation is replaced by a more subtle formula expressing $\zeta_A(\mathfrak s)$ as a $K$-linear combination of certain Carlitz multiple star polylogarithms (abbreviated as CMSPL's) evaluated at well-chosen integral points (by \cite{AT90} for depth one and \cite{C14} for arbitrary depth). See~\cite[Thm.~5.2.5]{CM17b}.

The Carlitz logarithm, denoted by $\Log_{\bC}$, plays an analogous role for the Carlitz module (see \ref{[t]ndef}) as the classical logarithm does for the multiplicative group $\GG_{m}$.  It is given by the series $\Log_{\bC}(z)=\sum_{i=0}^{\infty} \frac{z^{q^{i}}}{L_{i}}$, where $L_{0}:=1$ and $L_{i}:=(\theta-\theta^{q})\cdots (\theta-\theta^{q^{i}})$ for $i\in \NN$. We then define their deformation polynomials as

\begin{equation}\label{E: LLi}
 \LL_{0}:=1\hbox{ and }\LL_{i}:=(t-\theta^{q})\cdots(t-\theta^{q^{i}}) \hbox{ for }i\in\NN.
\end{equation}
For any index $\fs=(s_{1},\ldots,s_{r})\in \NN^{r}$ and $\bu=(u_{1},\ldots,u_{r})\in\oK^{r}$, the $\fs$-th ($\infty$-adic) $t$-motivic Carlitz multiple star polylogarithm associated to $\bu$ is defined by the series

\[ \fLis_{\fs,\bu}(t) = \sum_{i_1\geq \dots \geq i_r \geq 0} \frac{u_1^{q^{i_1}}\dots u_r^{q^{i_r}}}{\LL_{i_1}^{s_1}\dots \LL_{i_r}^{s_r}}\in \power{\oK}{t}, \] 
which is the deformation series of the CMSPL $\Lis_{\fs}$ at $\bu$ when $\Lis_{\fs}(\bu)$ is defined (see Sec.~\ref{Sub: t-motivic CMSPL}), i.e., $\fLis_{\fs,\bu}|_{t=\theta}=\Lis_{\fs}(\bu)$. The Anderson-Thakur series associated to $\fs$  was constructed by Anderson-Thakur~\cite{AT09} using the so-called Anderson-Thakur polynomials~\cite{AT90}  and we  denote it by $\zetaAmot(\fs)\in \power{\oK}{t}$ (see Def. \ref{Def: motivicMZV} for a precise definition).  Note that by~\cite{AT09}, $\zetaAmot(\fs)$ is a deformation of $\zeta_{A}(\fs)$ in the sense that $\zetaAmot(\fs)|_{t=\theta}=\Gamma_{\fs}\zeta_{A}(\fs)$, where $\Gamma_{\fs}:=\Gamma_{s_{1}}\cdots \Gamma_{s_{r}}\in A$ is the product of Carlitz factorials defined in \eqref{E: Carlitz factorials}.

We show in Lemma~\ref{L:Deformation identity} that $\zetaAmot(\fs)$ can be expressed using the following identity
\begin{equation}\label{E: IntroMotivicMZV}
\zetaAmot(\fs)=\sum_{\ell=1}^{T_\fs} b_{\ell}(t)\cdot (-1)^{\dep(\fs_{\ell})-1}\fLis_{\fs_{\ell},\bu_{\ell}}  
\end{equation}
for some positive integer $T_\fs$, explicit polynomials $b_{\ell}\in \FF_{q}[t]$, explicit indexes  $\fs_{\ell}\in \NN^{\dep(\fs_{\ell})}$ with $\dep(\fs_{\ell}) = 1$ ($1 \leq \ell \leq s$), $2 \leq \dep(\fs_{\ell}) \leq \dep(\fs)$ ($s+1 \leq \ell \leq T_{\fs}$) for some $1 \leq s \leq T_{\fs}$ and $\wt(\fs_{\ell})=\wt(\fs)$ and explicit integral points $\bu_{\ell}\in A^{\dep(\fs_{\ell})}$.  Note that specializing the identity \eqref{E: IntroMotivicMZV} at $t=\theta$ recovers the ``logarithmic" interpretation mentioned above:
\[ \Gamma_{\fs}\zeta_{A}(\fs)=\sum_{\ell=1}^{T_\fs} b_{\ell}(\theta)\cdot (-1)^{\dep(\fs_{\ell})-1}\Lis_{\fs_{\ell}}(\bu_{\ell}) .\]

\subsection{Statement of the main result}
In the world of global fields in positive characteristic, Anderson's $t$-modules (see~Sec.~\ref{Sec: background of t-modules}) play the role that commutative algebraic groups do in the classical theory.  In the classical setting, a productive direction of study for many transcendence questions has been to relate the special values one wishes to study to coordinates of the generalized logarithm of a commutative algebraic group over ${\overline{\mathbb{Q}}}$ evaluated at algebraic points. This method often meets with success because of  W\"ustholz's analytic subgroup theorem~\cite{W89}, which asserts that the $\overline{\QQ}$-linear relations among the coordinates of the generalized logarithm evaluated at a $\overline{\QQ}$-valued point of a commutative algebraic group defined over $\overline{\QQ}$ arise from the defining equations of $\Lie H$ for some algebraic subgroup $H \subset G$ defined over $\overline{\QQ}$.  This technique has led to many important applications in classical transcendental number theory (see~\cite{BW07}). In our positive characteristic setting, Yu's sub-$t$-module theorem~\cite{Yu97} plays the analogue of W\"ustholz's analytic subgroup theorem, and we hope to eventually study transcendence properties of Taylor coefficients of Anderson-Thakur series using our logarithmic interpretation given here.  For length considerations, we delay this to a future paper.

For  any index $\fs=(s_{1},\ldots,s_{r})\in \NN^{r}$, we let $\bu=(u_{1},\ldots,u_{r})\in \oK^{r}$ with $|u_i|_{\infty} \leq q^{\frac{s_i q}{q-1}}$ for each $1 \leq i \leq r-1$ and $|u_r|_{\infty} < q^{\frac{s_r q}{q-1}}$.  Following~\cite{CM17a}, we explicitly construct a $t$-module $G_{\fs,\bu}$ and an algebraic point $\bv_{\fs,\bu}\in G_{\fs,\bu}(\oK)$ in (\ref{E:Explicit t-moduleCMPL}) and (\ref{E:v_s,u}) using $\fs$ and $\bu$, and note that the logarithm of the $t$-module $G_{\fs,\bu}$, denoted by $\Log_{G_{\fs,\bu}}$, converges $\infty$-adically at $\bv_{\fs,\bu}$. We then establish the following explicit formula for this logarithmic vector $\Log_{G_{\fs,\bu}}(\bv_{\fs,\bu})$ (stated as Theorem~\ref{T:Logcoords} via Proposition~\ref{P: Taylor coeff} using hyperderivatives).

\begin{theorem}\label{T: IntrodT1}
Let the notation and assumptions be given as above.  For each $1\leq i\leq r$, we let $d_{i}:=s_{i}+\cdots+s_{r}$ and set $d:=d_{1}+\cdots+d_{r}$. For each $1\leq i\leq r$, we consider the Taylor expansion of the following $t$-motivic CMSPL at $t=\theta$:
\[\fLis_{(s_{r},\ldots,s_{i}), (u_{r},\ldots,u_{i})}(t)=\sum_{j=0}^{\infty}\alpha_{i,j}(t-\theta)^{j}. \]Define
\[
\bY_{\fs,\bu}:=\begin{pmatrix}
Y_{1}\\
\vdots\\
Y_{r}
\end{pmatrix}\in \Mat_{d\times 1}(\CC_{\infty}),
\]
where for each $1\leq i\leq r$, $Y_{i}$ is given by
\[
Y_{i}  = 
\begin{pmatrix}
(-1)^{r-i}\alpha_{i,d_{i}-1}\\
(-1)^{r-i}\alpha_{i,d_{i}-2}\\
\vdots\\
(-1)^{r-i} \alpha_{i,0}
\end{pmatrix}
\in \Mat_{d_i \times 1}(\CC_{\infty}).\]
Then we have the following formula 
\begin{equation}\label{D:logvector}
\Log_{G_{\fs,\bu}}(\bv_{\fs,\bu}) = \bY_{\fs,\bu}\in \Lie G_{\fs,\bu}(\CC_{\infty}).
\end{equation}
\end{theorem}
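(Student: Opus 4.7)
The plan is to establish the equivalent identity
\begin{equation*}
\Exp_{G_{\fs,\bu}}(\bY_{\fs,\bu})=\bv_{\fs,\bu}
\end{equation*}
in $G_{\fs,\bu}(\CC_{\infty})$ and then invoke the fact that $\Log_{G_{\fs,\bu}}$ is a formal inverse of $\Exp_{G_{\fs,\bu}}$ on the $\infty$-adic neighborhood of the origin where it converges. Since the hypothesis $|u_i|_{\infty}\leq q^{s_i q/(q-1)}$ (strict at $i=r$) guarantees $\infty$-adic convergence of $\Log_{G_{\fs,\bu}}$ at $\bv_{\fs,\bu}$ and of each series $\fLis_{(s_r,\ldots,s_i),(u_r,\ldots,u_i)}(t)$ in a disc around $t=\theta$, this reduces the problem to a formal verification of the exponentiation identity.

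Next, I would unpack the explicit block structure of $G_{\fs,\bu}$ and $\bv_{\fs,\bu}$ from \cite{CM17a}: the Lie algebra decomposes as a direct sum of $r$ blocks of dimensions $d_1,\ldots,d_r$, on which $\partial[t]$ acts by a block upper-triangular matrix whose diagonal $d_i\times d_i$ block is the Jordan block with eigenvalue $\theta$ (so each diagonal block is the Lie algebra of a Carlitz tensor power $C^{\otimes d_i}$), and whose superdiagonal block between the $i$-th and $(i+1)$-st blocks encodes the extension data built from $u_{i+1}$. The coordinate of $\bv_{\fs,\bu}$ in each block likewise only involves $u_i^{(k)}$ through a single entry, and the Frobenius $\tau$ on $G_{\fs,\bu}$ acts on these coordinates in a prescribed way. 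Writing $\Exp_{G_{\fs,\bu}}(\bz)=\sum_{n\geq 0}E_{n}\bz^{(n)}$ with $\partial[t]E_n=E_n\theta^{q^n}+E_{n-1}$ (the Anderson recursion), the identity $\Exp_{G_{\fs,\bu}}(\bY_{\fs,\bu})=\bv_{\fs,\bu}$ becomes a coordinate-wise identity of infinite series in the $u_i^{(k)}$ and the $\theta^{q^k}$.

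The heart of the proof is then to recognize the Taylor coefficients $\alpha_{i,j}$ of $\fLis_{(s_r,\ldots,s_i),(u_r,\ldots,u_i)}(t)$ at $t=\theta$, together with the alternating signs $(-1)^{r-i}$, as precisely the Anderson solutions of this coordinate-wise system. The key tool is the Frobenius functional equation satisfied by $\fLis$: a direct computation on the defining series gives, for each $1\leq i\leq r$,
\begin{equation*}
\fLis_{(s_r,\ldots,s_i),(u_r,\ldots,u_i)}(t)-\fLis_{(s_r,\ldots,s_i),(u_r,\ldots,u_i)}^{(1)}(t)\cdot \LL_{1}^{s_i}\;=\;u_i\cdot \fLis_{(s_r,\ldots,s_{i+1}),(u_r,\ldots,u_{i+1})}(t),
\end{equation*}
with the convention that the right-hand side is $u_r$ when $i=r$ (i.e.\ using depth-one $\fLis$). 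Taking the $j$-th hyperderivative at $t=\theta$ of this relation (and using Leibniz for $\LL_1^{s_i}=(t-\theta^{q})^{s_i}$) shifts the index $j$ within a fixed block $Y_i$ (matching the Jordan part of $\partial[t]-\theta$) and, at the block boundary $j=d_i-1$, produces exactly the inter-block coupling recorded by the superdiagonal of $\partial[t]$. The alternating sign $(-1)^{r-i}$ arises because the functional equation links the depth-$(r-i+1)$ and depth-$(r-i)$ star-polylogarithms with opposite signs, and is precisely the sign already appearing in the expansion~\eqref{E: IntroMotivicMZV}.

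Assembling the hyperderivative identities for all $i$ and all $0\leq j\leq d_i-1$ into a single matrix equation yields the formal identity $\Exp_{G_{\fs,\bu}}(\bY_{\fs,\bu})=\bv_{\fs,\bu}$, and the theorem follows. The main technical obstacle is the bookkeeping at the block boundaries: one must check that the top entry of block $Y_i$ (namely $(-1)^{r-i}\alpha_{i,d_i-1}$) couples to the bottom entry of block $Y_{i-1}$ in precisely the way dictated by the superdiagonal block of $\partial[t]$, including the coefficient $u_i$, the Jordan shifts, and the signs. Once this alignment is verified, everything else reduces to specializing hyperderivative identities from $\power{\oK}{t}$ to $\CC_{\infty}$ at $t=\theta$, which is routine.
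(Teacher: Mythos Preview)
Your overall strategy---show $\Exp_{G_{\fs,\bu}}(\bY_{\fs,\bu})=\bv_{\fs,\bu}$ and then deduce the logarithmic identity---matches the paper's endpoint, but your route diverges substantially and contains a concrete error.

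The functional equation you write down is not correct. Setting $f_i(t):=\fLis_{(s_r,\ldots,s_i),(u_r,\ldots,u_i)}(t)$, a direct computation using $\LL_j^{(1)}=\LL_{j+1}/\LL_1$ gives
\[
f_i(t)\;-\;\frac{f_i^{(1)}(t)}{\LL_1^{\,d_i}}\;=\;u_i\cdot f_{i+1}(t),\qquad d_i=s_i+\cdots+s_r,
\]
not $f_i-f_i^{(1)}\cdot\LL_1^{s_i}=u_if_{i+1}$: both the placement of $\LL_1$ (division, not multiplication) and the exponent ($d_i$, not $s_i$) are wrong. With the corrected equation, the ``take hyperderivatives at $t=\theta$'' step no longer produces a clean index shift, because $\LL_1(\theta)=\theta-\theta^q\neq 0$; the coupling you need between blocks comes from the $\tau$-part $E$ of $\rho(t)$, not merely from the Jordan block $N$, and matching the exponential recursion $e_n(\theta^{q^n}I+N)=(\theta I+N)e_n+E\,e_{n-1}^{(1)}$ against hyperderivatives of the corrected functional equation is considerably more delicate than your sketch suggests.

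The paper does not attempt this direct verification. It instead invokes Anderson's theorem (Theorem~\ref{T: Anderson}): for $\bg$ satisfying $\bg^{(-1)}\Phi'-\bg=\bw$ one has $\Exp_G(\delta_0\circ\iota(\bg+\bw))=\delta_1\circ\iota(\bw)$. The paper builds $\bg$ explicitly as the convergent series $\sum_{n\ge1}\bw^{(n)}\prod_{m}(\Phi\pinv)^{(m)}$ (Proposition~\ref{P:Convergence}), then computes $\delta_0\circ\iota$ of the partial sums via a block $\partial$-matrix formula in $\Theta=(\Phi\pinv)^{\tr}$ (the Technical Lemma~\ref{L:technical lemma}, specialized as Proposition~\ref{P: special case of general formulae}). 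A combinatorial bijection of index sets identifies the resulting sum with $\bY_{\fs,\bu}$. This machinery sidesteps entirely the block-boundary bookkeeping you flag as the main obstacle.

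Finally, your last step is too casual. Knowing $\Exp_G(\bY_{\fs,\bu})=\bv_{\fs,\bu}$ and $\Exp_G(\Log_G(\bv_{\fs,\bu}))=\bv_{\fs,\bu}$ does not immediately give equality, since $\Exp_G$ has a nontrivial period lattice; you would need $\bY_{\fs,\bu}$ to lie in the region where $\Log_G\circ\Exp_G=\mathrm{id}$, which you have not checked. The paper instead proves a tailored uniqueness lemma (Lemma~\ref{Yu's lemma}, generalizing Yu's last-coordinate theorem): two vectors in $\Lie G(\CC_\infty)$ that exponentiate to algebraic points and agree in the last coordinate of each $d_i$-block must be equal. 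This is applied using Theorem~\ref{T: Thm4.2.3 of CM17}, which already identifies those last coordinates for $\Log_G(\bv_{\fs,\bu})$.
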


For any $r$-tuple $\Lambda=(\lambda_{1},\ldots,\lambda_{r})$, denote the reversed order $r$-tuple by $\widetilde{\Lambda}:=(\lambda_{r},\ldots,\lambda_{1})$. We explicitly construct the $t$-module $G_{\fs}$ over $K$, which arises from a fiber coproduct of the Anderson dual $t$-motives associated to the $t$-modules $G_{\widetilde{\fs_{\ell}},\widetilde{\bu_{\ell}}}$, and we construct a special point $\bv_{\fs}\in G_{\fs}(K)$ (see~Sec.~\ref{Subsec: fiber coproduct} for the detailed definitions).  In~\cite{CM17b}, the authors gave a logarithmic interpretation of $\zeta_{A}(\fs)$, in the sense that we construct a vector $Z_{\fs}\in \Lie G_{\fs}(\CC_{\infty})$ for which $\Gamma_{\fs}\zeta_{A}(\fs)$ occurs as the $\wt(\fs)$-th coordinate of $Z_{\fs}$.  The primary purpose of this paper is to  give explicit formulae for all the coordinates of $Z_{\fs}$. 

For any index $\fs\in\NN^{r}$, let the $\FF_{q}[t]$-module structure on $G_{\fs}$ (resp. $G_{\tilde{\fs}_{\ell},\tilde{\bu}_{\ell}}$) be defined by $\rho$ (resp.~$\rho_{\ell}$), and denote by $\partial\rho$ (resp.~$\partial\rho_{\ell}$) the induced map on $\Lie G_{\fs}$ (resp.~$\Lie G_{\tilde{\fs}_{\ell},\tilde{\bu}_{\ell}}$). Precise details are given in Sec.~\ref{Sec: background of t-modules}. For any vector $\bz\in \Mat_{n\times 1}(\CC_{\infty})$ with $n\geq d_{1}$, we denote by $\bz_{-}$ the vector of the last $n-d_{1}$ coordinates of $\bz$ (see Definition~\ref{Def: z-}). The main result of this paper is stated as follows (Theorem~\ref{T:MainThm}).

\begin{theorem}\label{T: IntrodT2} For any index $\fs=(s_{1},\ldots,s_{r})\in \NN^{r}$, we keep the notation in~\eqref{E: IntroMotivicMZV}. Let $d_{1}:=s_{1}+\cdots+s_{r}$ and let $Z_{\fs}\in \Lie G_{\fs}(\CC_{\infty})$ be the vector described as above. For each $s+1\leq \ell \leq T_{\fs}$, we  set $\bY_{\ell}:=\bY_{\tilde{\fs}_{\ell},\tilde{\bu}_{\ell}}$ given in Theorem~\ref{T: IntrodT1}. Considering the Taylor expansion of $\zetaAmot(\fs)$ at $t=\theta$,
\[\zetaAmot(\fs)=\sum_{j=0}^{\infty} a_{\fs,j}(t-\theta)^{j}   .\]
Then $Z_{\fs}$ has the following explicit formula 
\[
Z_{\fs}=\begin{pmatrix}
a_{\fs,d_{1}-1}\\
\vdots\\
a_{\fs,1}\\
a_{\fs,0}\\
\left(\partial \rho_{s+1}(b_{s+1})(\bY_{s+1})\right)_{-}\\
\vdots\\
\left( \partial \rho_{T_{\fs}}(b_{T_{\fs}}) (\bY_{T_{\fs}})\right)_{-}
\end{pmatrix}.
\]
\end{theorem}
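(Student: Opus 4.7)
The plan is to assemble the log vector $Z_{\fs}$ from the individual log vectors provided by Theorem~\ref{T: IntrodT1}, using the fiber coproduct description of $G_{\fs}$ together with the deformation identity~\eqref{E: IntroMotivicMZV} for $\zetaAmot(\fs)$. First I would apply Theorem~\ref{T: IntrodT1} to each of the depth-$\geq 2$ building blocks: for every $s+1 \leq \ell \leq T_{\fs}$, this yields
\[
\Log_{G_{\widetilde{\fs_{\ell}},\widetilde{\bu_{\ell}}}}\!\bigl(\bv_{\widetilde{\fs_{\ell}},\widetilde{\bu_{\ell}}}\bigr) \;=\; \bY_{\ell} \;\in\; \Lie G_{\widetilde{\fs_{\ell}},\widetilde{\bu_{\ell}}}(\CC_{\infty}).
\]
Because $\Log_{G_{\ell}}$ intertwines the $\FF_{q}[t]$-action $\rho_{\ell}$ on $G_{\ell}$ with the action $\partial\rho_{\ell}$ on $\Lie G_{\ell}$, multiplying by $b_{\ell}(t) \in \FF_{q}[t]$ produces $\Log_{G_{\ell}}(\rho_{\ell}(b_{\ell}) \bv_{\ell}) = \partial\rho_{\ell}(b_{\ell})(\bY_{\ell})$. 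The depth-one summands $1 \leq \ell \leq s$ in~\eqref{E: IntroMotivicMZV} are handled in parallel, with the observation that they contribute only to a single Carlitz tensor power block $C^{\otimes d_{1}}$ of dimension $d_{1} = \wt(\fs)$.

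Next I would unwind the fiber coproduct: by construction in Section~\ref{Subsec: fiber coproduct}, the dual $t$-motive of $G_{\fs}$ is the pushout gluing the dual $t$-motives of the various $G_{\widetilde{\fs_{\ell}},\widetilde{\bu_{\ell}}}$ along their common $C^{\otimes d_{1}}$ quotient. Dualizing, this means $\Lie G_{\fs}$ fits into a diagram in which every $\partial\rho_{\ell}(b_{\ell})(\bY_{\ell})$ maps into $\Lie G_{\fs}$ with its top $d_{1}$ coordinates landing in a common block and its bottom $\dim\Lie G_{\ell} - d_{1}$ coordinates landing in its own reserved block. By the functoriality of the logarithm under $t$-module morphisms, $Z_{\fs}$ is characterized as the unique vector in $\Lie G_{\fs}(\CC_{\infty})$ whose pullback to each $\Lie G_{\ell}$ agrees with $\partial\rho_{\ell}(b_{\ell})(\bY_{\ell})$. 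Thus the bottom block of each $\ell$-th constituent of $Z_{\fs}$ is literally $\bigl(\partial\rho_{\ell}(b_{\ell})(\bY_{\ell})\bigr)_{-}$, as claimed.

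It remains to identify the shared top $d_{1}$ coordinates of $Z_{\fs}$. By Theorem~\ref{T: IntrodT1} applied after the order reversal, the top $d_{1}$ entries of $\partial\rho_{\ell}(b_{\ell})(\bY_{\ell})$ are the first $d_{1}$ Taylor coefficients (at $t=\theta$) of $b_{\ell}(t) \cdot (-1)^{\dep(\fs_{\ell})-1} \fLis_{\fs_{\ell},\bu_{\ell}}(t)$ arranged in reversed order. Summing over all $\ell$ (including depth-one terms) and using~\eqref{E: IntroMotivicMZV}, the first $d_{1}$ coordinates reassemble into the Taylor coefficients $a_{\fs, d_{1}-1}, a_{\fs, d_{1}-2}, \ldots, a_{\fs, 0}$ of $\zetaAmot(\fs)$, matching the formula in the statement.

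The main obstacle will be the bookkeeping in this last step: one must verify that the projection-to-top-$d_{1}$ maps from each $\Lie G_{\widetilde{\fs_{\ell}},\widetilde{\bu_{\ell}}}$ into the fiber-coproduct block $\Lie C^{\otimes d_{1}}$ really do implement, on the level of Taylor expansions, the identity~\eqref{E: IntroMotivicMZV}, with the correct signs, the correct reversal of indices between $\fs_{\ell}$ and $\widetilde{\fs_{\ell}}$, and the correct positioning of hyperderivative coordinates. Once this compatibility between the reversed CMSPL log formula of Theorem~\ref{T: IntrodT1} and the coordinate-wise Taylor expansion of the Anderson-Thakur series is established, the theorem follows by reading off the blocks.
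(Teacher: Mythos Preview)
Your proposal is essentially correct and follows the same route as the paper: apply Theorem~\ref{T: IntrodT1} to identify each $Z_{\ell}=\Log_{G_{\ell}}(\bv_{\ell})$ with $\bY_{\ell}$, compute $\partial\rho_{\ell}(b_{\ell})(\bY_{\ell})$ (the paper does this via the $d$-matrix/$\partial$-matrix product rule, Proposition~\ref{P:dmatfacts}(2), recorded as Corollary~\ref{C: partial bY}), then push everything through the explicit map $\partial\pi$ coming from the fiber coproduct, and finally invoke the deformation identity~\eqref{E: IntroMotivicMZV} (Lemma~\ref{L:Deformation identity}) to recognize the sum of top-$d_{1}$ blocks as the Taylor coefficients of $\zetaAmot(\fs)$.

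One point of phrasing needs correction: you write that ``$Z_{\fs}$ is characterized as the unique vector in $\Lie G_{\fs}(\CC_{\infty})$ whose pullback to each $\Lie G_{\ell}$ agrees with $\partial\rho_{\ell}(b_{\ell})(\bY_{\ell})$.'' This is not how $Z_{\fs}$ arises. The map $\partial\pi:\bigoplus_{\ell}\Lie G_{\ell}\to\Lie G_{\fs}$ is a \emph{surjection}, and $Z_{\fs}$ is \emph{defined} in the paper as the image $\partial\pi\bigl(\partial\rho_{1}(b_{1})Z_{1},\ldots,\partial\rho_{T}(b_{T})Z_{T}\bigr)$; there is no pullback and no uniqueness statement of the kind you describe. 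Fortunately your subsequent description of what happens---bottom blocks $(\cdot)_{-}$ carried over intact for $\ell\geq s+1$, and the common top-$d_{1}$ block equal to the \emph{sum} of the individual top blocks---is exactly the formula for $\partial\pi$, so the slip in framing does not affect the computation.
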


Note that  $\partial \rho_{\ell}(b_{\ell})\left( \bY_{\ell}\right)$ can be explicitly written down (see Corollary~\ref{C: partial bY}) for each $s+1\leq \ell \leq T_{\fs}$, and the constant term $a_{\fs,0}$ is equal to $\Gamma_{\fs} \zeta_{A}(\fs)$, and the other $j$-th Taylor coefficient $a_{\fs,j}$ is given in terms of the $j$-th hyperderivative (with respect to $t$) of $\zetaAmot(\fs)$ evaluated at $t=\theta$.

\subsection{Some remarks} Here we supply some remarks concerning the above.

\begin{enumerate}

\item In many number theory settings, Taylor coefficients of important functions have interesting arithmetic interpretations.  In the setting of $t$-modules, Anderson and Thakur gave the first period interpretation of certain Taylor coefficients in~\cite{AT90}. There, they showed that for any positive integer $s$, the kernel of $\Exp_{\bC^{\otimes s}}$ is a free $A$-module of rank one, and the first $s$ Taylor coefficients of the Anderson-Thakur function $\omega_{s}$ (see \cite[Sec.~2.5]{AT90}) give the coordinates of the generator of $\Ker \Exp_{\bC^{\otimes s}}$  (see also~\cite[Lem.~8.3]{Ma18}).  A similar period interpretation occurs for the more general Drinfeld $\bA$-modules in \cite[Thm. 6.7]{G17a}. The results of Theorems~\ref{T: IntrodT1} and~\ref{T: IntrodT2} give a logarithmic interpretation for Anderson-Thakur series and $t$-motivic CMSPL's in terms of Taylor coefficients. 

\item When $\dep(\fs)=1$, ie., $\fs=s\in \NN$, our $t$-module $G_{s}$ is equal to the $s$-th power $\bC^{\otimes s}$ of the Carlitz module $\bC$, and our vector $Z_s$ and special point $\bv_{s}$ are equal to the ones given in~\cite{AT90}  (where they used the notation $z_{s}$ and $Z_{s}$ respectively). In this special case of depth one, Anderson-Thakur showed that the last coordinate of $Z_{s}$ is given by $\Gamma_{s}\zeta_{A}(s)$, but did not figure out the other coordinates of $Z_{s}$.  In~\cite{Pp}, Papanikolas gives formulae for all other coordinates of $Z_{s}$, but his formulae are different from ours in Theorem \ref{T: IntrodT2}. 

\item In~\cite{AT09},  for each index $\fs\in \NN^{r}$  Anderson and Thakur constructed a $t$-motive $M_{\fs}$ together with a system of Frobenius difference equations $\Psi_{\fs}^{(-1)}=\Phi_{\fs}\Psi_{\fs}$  so that $\Gamma_{\fs}\zeta_{A}(\fs)/\tilde{\pi}^{\wt(\fs)}$ occurs as an entry of the period matrix $\Psi_{\fs}|_{t=\theta}$ of $M_{\fs}$, where $\Phi_{\fs}$ is a matrix of size $r+1$ with entries in $\overline{K}[t]$, where $\Psi_{\fs}$ is an invertible matrix of size $r+1$ with entries in $\power{\CC_{\infty}}{t}$, and $\tilde{\pi}$ is a fundamental period of the Carlitz module $\bC$. Here the terminology of $t$-motive is in the sense of~\cite{P08}. The value $\Gamma_{\fs}\zeta_{A}(\fs)/\tilde{\pi}^{\wt(\fs)}$ is the specialization at $t=\theta$ of a certain power series $\Omega^{\wt(\fs)}\cdot \zetaAmot(\fs) \in \power{\C_\infty}{t}$, which is an entry of $\Psi_{\fs}$  (see \cite[\S 2.5]{AT09} for details). Papanikolas~\cite{P08} showed that $Z_{\Psi_{\fs}}:= \Spec \overline{K}(t)[\Psi_{\fs},\det \Psi_{\fs}^{-1}]$ is a torsor for the algebraic group $\Gamma_{M_{\fs}}\times_{\FF_{q}(t)} \overline{K}(t)$, which arises from the base change of the fundamental group of the Tannakian category generated by $M_{\fs}$, the so-called $t$-motivic Galois group of $M_{\fs}$. Since $\zetaAmot(\fs)$ occurs in the affine coordinate ring of $Z_{\Psi_{\fs}}$, which we regard as a period torsor for $\Gamma_{M_{\fs}}$,  it can be viewed as a $t$-motivic period (cf.~\cite[Def.~4.1]{Br14}). These ideas of $t$-motives and $t$-deformations of the logarithm originally go back to Drinfeld and Anderson (see \cite{A86} and also Anderson-Thakur~\cite{AT90}, Papanikolas~\cite{P08}), and they were later extensively studied in~\cite{CY07, AT09, C14, M17}.

\item  As far as we know, there is no analogue of Theorem~\ref{T: IntrodT2} in the classical case as it is not clear whether one can relate real-valued MZV's to coordinates of generalized logarithms of commutative algebraic groups defined over $\overline{\mathbb{Q}}$ at algebraic points.
  
\item  In \cite{Ma18}, Maurischat realizes hyperderivatives of the coordinates of periods of $t$-modules as coordinates of the period matrix of a specifically constructed $t$-module called a prolongation.  He then combines these techniques with Papanikolas' theory~\cite{P08} to show that the coordinates of periods of certain $t$-modules are algebraically independent.  Because the individual coordinates in our main theorem \ref{T:MainThm} can be expressed in terms of hyperderivatives, it would be natural to try applying Maurischat's techniques to study transcendence questions about MZV's.  Specifically, we hope to be able to prove that the coordinates of the logarithmic vector of Theorem \ref{T:MainThm} are algebraically independent.    However, in order to do this, we would need to compute the $t$-motivic Galois group associated to the $t$-motive.  This would be quite complicated as the size of $\mathcal{M}$ is quite large.  We hope to study this transcendence question in a future project.
\end{enumerate}

\subsection{Organization of the paper}
To establish the explicit formulae for $Z_{\fs}$ in Theorem~\ref{T: IntrodT2}, we must first go back to the construction of $Z_{\fs}$ in \eqref{E: Def of Zs}. This boils down to establishing explicit formulae for the logarithmic vector \eqref{D:logvector}, whose proof is the core and technical part of this paper. 

In Section~\ref{Sec: Anderson's theory} we review the theory of $t$-modules and dual $t$-motives as well as go over some fundamental theories of Anderson, particularly Theorem~\ref{T: Anderson} which is a crucial tool in the proof of Theorem~\ref{T: IntrodT1}. We further review how to construct the associated $t$-module from a dual $t$-motive using his theory of $t$-frames. In this section, we also define the central object of our study, a dual $t$-motive $M'$ whose associated logarithm function gives the CMSPL's we're studying.

The main goal of Section~\ref{Sec: Hyperderivatives of CMSPL} is to express the logarithmic vector associated to $M'$ in terms of CMSPLs.  This is done in Theorem~~\ref{T: IntrodT1} (stated as Theorem~\ref{T:Logcoords}).  The main ingredient in the proof of Theorem~\ref{T:Logcoords} is Proposition~\ref{P: special case of general formulae}, which relates the logarithmic vector to invariants of the $t$-motive $M'$.  The reader should note that Proposition \ref{P: special case of general formulae} is a special case of a technical lemma, Lemma~\ref{L:technical lemma}, and for continuity of exposition, we delay the proof of Lemma~\ref{L:technical lemma} (and thus also Proposition \ref{P: special case of general formulae}) until Section \ref{Sec: general formulae}.  Because Section \ref{Sec: general formulae} is very technical, we suggest that the reader might wish to skim this section during a first reading.

In Section~\ref{Section of t-motivic MZV's}, we first review Anderson-Thakur series and then derive a deformation identity for them in Lemma~\ref{L:Deformation identity}.  In section~\ref{S:ExplicitFormulae} we make the final connection between our logarithmic vector and Anderson-Thakur series.  We begin by reviewing fiber coproducts of dual $t$-motives and the construction of $G_{\fs}$ and $Z_{\fs}$ mentioned in the introduction. With the results from Theorem~\ref{T:Logcoords} and Lemma~\ref{L:Deformation identity}, we use techniques developed in~\cite{CM17b} to derive an explicit formula for $Z_{\fs}$, stated in our Main Theorem~\ref{T:MainThm}. Finally, in Theorem~\ref{T: Monomial of MZV's} we further relate any monomial of MZV's to a coordinate of the logarithm of a certain $t$-module evaluated at a special point, and give a description of the other coordinates of this logarithmic vector explicitly in terms of Taylor coefficients of $t$-motivic CMSPL's.

\subsection*{Acknowledgements}
We are grateful to M. Papanikolas for sharing his manuscript~\cite{Pp} with us, and to J. Yu for his many helpful suggestions and comments. The first and third authors thank National Center for Theoretical Sciences and Kyushu University for their financial support and hospitality.   Finally, we thank the referees for  their helpful comments that improve the exposition of this paper.

\section{Anderson's theory revisited}\label{Sec: Anderson's theory}
In this section we briefly review the theory of Anderson's $t$-modules and their connection with dual $t$-motives.  The key result of this section is a formula connecting the exponential function of a $t$-module with invariants of the dual $t$-motive, given in Theorem \ref{T: Anderson}.

\subsection{Notation and Frobenius twistings}

\begin{tablesymb}\label{tableofsymbols} \textup{We use the following symbols throughout this paper}.\\
${}$\\
\begin{xtabular}{l r l l}

$\NN$ & $=$ & the set of positive integers.\\
$q$ & $=$ &  a power of a prime number $p$.\\
$\F_q$& $=$ & the finite field of $q$ elements.\\
$t,\theta$& $=$ & independent variables. \\
$A$ &$=$ & $\F_q[\theta]$, the polynomial ring in the variable $\theta$ over $\FF_{q}$.\\
$A_{+}$ & $=$ & the set of monic polynomials in $A$. \\
$K$ &$=$ & $\F_q(\theta)$, the quotient field of $A$.\\
$\ord_\infty$ &= &  the normalized valuation of $K$  at the infinite place for which $\ord_{\infty}(1/\theta)=1$.\\
$\lvert\cdot\rvert_{\infty}$ &$=$ & $q^{-\ord_{\infty}(\cdot)}$, an absolute value on $K$.\\
$K_\infty$ &$=$ &$\laurent{\F_q}{1/\theta}$,  the completion of $K$ at the infinite place.\\
$\C_\infty$ &$=$ &$\widehat{\overline{K}}_\infty$,  the completion of an algebraic closure of $K_\infty$.\\
$\oK$ & $=$ &  the algebraic closure of $K$ in $\CC_{\infty}$.\\
$L_{i}$ & $=$ & $(\theta-\theta^{q})\cdots (\theta-\theta^{q^{i}})$ for $i\in \NN$, and $L_{0}:=1$\\
 $\LL_{i}$ & $=$ & $(t-\theta^{q})\cdots (t-\theta^{q^{i}})$ for $i\in \NN$, and $\LL_{0}:=1$\\ 
$\widetilde{\Lambda}$ &$=$ &$(\lambda_{r},\ldots,\lambda_{1})$ for any $r$-tuple $\Lambda=(\lambda_{1},\ldots,\lambda_{r})$ of elements of a nonempty set.\\
$\wt(\fs)$&= & $\sum_{i=1}^{r}s_{i}$ for an index $\fs=(s_{1},\ldots,s_{r})\in \NN^{r}$.\\
$\dep(\fs)$&= & $r$ for an index $\fs=(s_{1},\ldots,s_{r})\in \NN^{r}$.\\
\end{xtabular}
\end{tablesymb}

 We denote by $\power{\CC_{\infty}}{t}$ the ring of formal power series in the variable $t$ with coefficients in $\CC_{\infty}$, and denote by $\laurent{\CC_{\infty}}{t}$ the quotient field of $\power{\CC_{\infty}}{t}$.  We define $\TT_{\theta}$ to be the subring of $\power{\CC_{\infty}}{t}$ consisting of power series convergent on $|t|_{\infty}\leq |\theta|_{\infty}$.
Then $\TT_{\theta}$ is complete with respect to the Gauss norm by putting \[ \lVert f \rVert_{\theta}:=\max_{i} \left\{ |b_{i} \theta^{i}|_{\infty}  \right\} \] for $f=\sum_{i \geq 0} b_{i}t^{i}\in \TT_{\theta}$ -- for more details see \eqref{Tatealgdef}. Finally for any integer $n$, we define the $n$-th fold Frobenius twist on $\laurent{\CC_{\infty}}{t}$:
\[ \begin{matrix}
 \laurent{\CC_{\infty}}{t}& \rightarrow& \laurent{\CC_{\infty}}{t}\\
 f:=\sum a_{i} t^{i}&\mapsto& f^{(n)}:=\sum a_{i}^{q^{n}}t^{i}.  
\end{matrix} \]
We then extend these Frobenius twists to matrices over $\laurent{\CC_{\infty}}{t}$, i.e., for any matrix $B=(b_{ij})\in \Mat_{\ell \times m}(\laurent{\CC_{\infty}}{t})$, we define 
\[B^{(n)}:=(b_{ij}^{(n)})  .\]

For any $A$-subalgebra $R$ of $\CC_{\infty}$, we define the ring of twisted polynomials 
\[ \Mat_{d}(R)[\tau]:=\left\{ \sum_{i=0}^{\infty} \alpha_{i}\tau^{i}| \alpha_{i}\in \Mat_{d}(R)\ \forall i \ \textnormal{and} \ \alpha_{i}=0\hbox{ for }i\gg 0  \right\}  \]
subject to the relation: for $\alpha,\beta\in \Mat_{d}(R)$,
\[ \alpha \tau^{i}\cdot \beta \tau^{j}:= \alpha \beta^{(i)} \tau^{i+j}. \]
We put $R[\tau]:=\Mat_{1}(R)[\tau]$ and indeed we have the natural identity $\Mat_{d}(R[\tau])=\Mat_{d}(R)[\tau]$. For any $\phi=\sum_{i=0}^{\infty}\alpha_{i}\tau^{i}\in \Mat_{d}(R)[\tau]$, we put
\[ \partial \phi:=\alpha_{0} .\]

\subsection{Background on Anderson $t$-modules}\label{Sec: background of t-modules} Fix a positive integer $d$ and an $A$-subalgebra $R$ of $\CC_{\infty}$ with quotient field $F$. Let $\rho$ be an $\FF_{q}$-linear ring homomorphism 
\[ \rho:\FF_{q}[t]\rightarrow \Mat_{d}(R)[\tau] \]
so that $\partial \rho(t)-\theta I_{d}$ is a nilpotent matrix. We have the natural identification
\[\Mat_{d}(R)[\tau]\cong \End_{\FF_{q}}\left(  {\GG_{a}^{d}}_{/R}\right) ,\]
where the latter is the ring of $\FF_{q}$-linear endomorphisms over $R$ of the algebraic group scheme $ {\GG_{a}^{d}}_{/R}$ and $\tau$ is identified with the Frobenius operator that acts on $  {\GG_{a}^{d}}_{/R}$ by raising each coordinate to the $q$-th power. Thus, the map $\rho$ gives rise to an $\FF_{q}[t]$-module structure on $  {\GG_{a}^{d}}_{/R}(R')$ for any $R$-algebra $R'$.  By a $d$-dimensional $t$-module  defined over $R$, we mean the pair $G=(  {\GG_{a}^{d}}_{/R}, \rho)$, which has underlying group scheme $  {\GG_{a}^{d}}_{/R}$, with an $\FF_{q}[t]$-module structure via $\rho$. Note that since $\rho$ is an $\FF_{q}$-linear ring homomorphism, $\rho$ is uniquely determined by $\rho(t)$. A sub-$t$-module of $G$ is a connected algebraic subgroup of $G$ that is invariant under the $\FF_{q}[t]$-action. 

A basic example is the $n$-th tensor power of the Carlitz module denoted by $\bC^{\otimes n}:=(  {\GG_{a}^{n}}_{/A}, [\cdot]_{n})$ for $n\in \NN$ introduced by Anderson and Thakur~\cite{AT90}, where 
$[\cdot]_{n}:\FF_{q}[t]\rightarrow \Mat_{n}(A)[\tau]$ is the $\FF_{q}$-linear ring homomorphism given by
\begin{equation}\label{[t]ndef}
 [t]_{n}=\theta I_{n}+N_{n}+E_{n}\tau
\end{equation}
with $N_n$ the matrix with $1$'s in the first super diagonal and $0$'s elsewhere and $E_n$ the matrix with a single $1$ in the lower left corner and $0$'s elsewhere.
%\[N_{n} = \left(
%\begin{array}{ccccc}
%0 & 1 & 0 & \cdots & 0 \\
%& 0 & 1 & \ddots & \vdots \\
%& & \ddots & \ddots & 0 \\
%& & & \ddots & 1 \\
%& & & & 0
%\end{array}
%\right)
%\in \Mat_{n}(\FF_{q}) \ \textnormal{and} \ E_{n}= \left(
%\begin{array}{cccc}
%0 & \cdots & \cdots & 0 \\
%\vdots & \ddots & & \vdots \\
%0 & & \ddots & \vdots \\
%1 & 0 & \cdots & 0
%\end{array}
%\right)\in \Mat_{n}(\FF_{q}) .\]
When $n=1$, $\bC:=\bC^{\otimes 1}$ is the so-called Carlitz $\FF_{q}[t]$-module. 

Fix a $t$-module $G=(  {\GG_{a}^{d}}_{/R}, \rho)$ as above.  Anderson~\cite{A86} showed that one has an exponential map $\Exp_{G}$,  that is an entire $\FF_{q}$-linear map 
\[\Exp_{G}:\Lie G(\CC_{\infty})\rightarrow G(\CC_{\infty}) \]
satisfying the functional equation
\[\Exp_{G}\circ \partial \rho(a)=\rho(a)\circ \Exp_{G} \ \forall a\in \FF_{q}[t]. \] As a $d$-variable vector-valued power series, it is expressed as 
\[ \Exp_{G}(\bz)=\sum_{i=0}^{\infty} e_{i}\bz^{(i)} ,\]
where $e_{0}=I_d$ and $e_{i}\in \Mat_{d}(F)$ for all $i$ and $\bz=(z_{1},\ldots,z_{d})^{\tr}$ with $\bz^{(i)}:=(z_{1}^{q^{i}},\ldots,z_{d}^{q^{i}})^{\tr}$. As vector-valued power series, the formal inverse of $\Exp_{G}$ is called the logarithm of $G$ and is denoted by $\Log_{G}$. That is, we have the identities
\[\Log_{G}\circ \Exp_{G}=\textnormal{identity}=\Exp_{G}\circ \Log_{G}  \]
and we find that $\Log_{G}$ satisfies the functional equation
\[\Log_{G}\circ \rho(a)=\partial \rho(a)\circ \Log_{G}\ \forall a\in \FF_{q}[t].  \]
We note that in general, $\Log_{G}$ is not an entire function on $ G(\CC_{\infty})$ but converges on a certain open subset. 

By a morphism $\phi$ of two $t$-modules  $G_{1} = ( \GG_{a/R}^{d_{1}}, \rho_{1})$ and $G_{2} = ( \GG_{a/R}^{d_{2}}, \rho_{2})$ over $R$, we mean that $\phi:G_{1}\rightarrow G_{2}$ is a morphism of algebraic group schemes over $R$ and that it commutes with the $\FF_{q}[t]$-actions. By writing $\phi=\sum_{i=0}^{\infty}\alpha_{i}\tau^{i}\in \Mat_{d_{2}\times d_{1}}(R)[\tau]$, we note that the differential of $\phi$ at the origin is identified with 
\begin{equation}\label{E: partial phi}
\partial \phi:=\alpha_{0}:\Lie G_{1}\rightarrow \Lie G_{2} .
\end{equation}
The exponential maps of $t$-modules are functorial in the sense that one has the following commutative diagram:

\[
\xymatrix{
 G_{1} \ar@{->}[r]^{ \phi}& G_{2}  \\
 \Lie G_{1} \ar@{->}[r]^{ \partial \phi} \ar[u]^{\Exp_{G_{1}}} & \Lie G_{2} \ar[u]_{\Exp_{G_{2}}} .
}\] 

\subsection{From Anderson dual $t$-motives to Anderson $t$-modules}\label{Subsec: t-motives to t-modules}

In this section we will construct Frobenius modules and dual $t$-motives and show how one can functorially construct $t$-modules from these objects. In what follows, we take an algebraically closed subfield $\KK$ of $\CC_{\infty}$ containing $K$. For example, $\KK$ can be $\oK$ or $\CC_{\infty}$.  Let $\KK[t,\sigma]:=\KK[t][\sigma]$ be the ring obtained by joining the non-commutative variable  $\sigma$ to the polynomial ring $\KK[t]$ subject to the relation
\[ \sigma f=f^{(-1)} \sigma\hbox{ for }f\in \KK[t]  .\]
Note that $\KK[t,\sigma]$ contains the two subrings $\KK[t]$ and $\KK[\sigma]$, but the latter is non-commutative. 

\subsubsection{Frobenius modules and dual $t$-motives}\label{Subsubsec:FrobeniusModules}
We follow~\cite{CPY14} to adapt the terminology of Frobenius modules.

\begin{definition}
A Frobenius module over $\KK$ is a left $\KK[t,\sigma]$-module that is free of finite rank over $\KK[t]$.
\end{definition}

The most basic example of a Frobenius module is the trivial module ${\bf{1}}$, which has underlying module $\KK[t]$, on which $\sigma$ acts by
\[ \sigma f:=f^{(-1)} \ \forall f\in{\bf{1}}. \]
Another important example is the $n$-th tensor power of the Carlitz $t$-motive $C^{\otimes n}$ for $n\in \NN$. Here, $\KK[t]$ is the underlying module of $C^{\otimes n}$, on which the action of $\sigma$ is given by
\[\sigma f:=(t-\theta)^{n}  f^{(-1)}  \ \forall f\in C^{\otimes n}  .\]

 One core object  that we study in this paper is the $t$-module arising from the following Frobenius module $M$. \begin{definition}  Fix a positive integer $r$ and an $r$-tuple of positive integers $\fs=(s_{1},\ldots,s_{r})\in \NN^{r}$ and an $r$-tuple of polynomials ${\mathfrak{Q}} = (Q_{1},\ldots,Q_{r})\in \KK[t]^r$.  Let $M$ be a free left $\KK[t]$-module of rank $r+1$ with a fixed basis $\left\{m_{1},\ldots,m_{r+1} \right\}$ and put ${\bf{m}}:=(m_{1},\ldots,m_{r+1})^{\tr}\in \Mat_{(r+1)\times 1}(M)$. We define the following matrix
\begin{equation}\label{E:Phi s}
\Phi :=
               \begin{pmatrix}
                (t-\theta)^{s_{1}+\cdots+s_{r}}  & 0 & 0 &\cdots  & 0 \\
                Q_{1}^{(-1)}(t-\theta)^{s_{1}+\cdots+s_{r}}  & (t-\theta)^{s_{2}+\cdots+s_{r}} & 0 & \cdots & 0 \\
                 0 &Q_{2}^{(-1)} (t-\theta)^{s_{2}+\cdots+s_{r}} &  \ddots&  &\vdots  \\
                 \vdots &  & \ddots & (t-\theta)^{s_{r}} & 0 \\
                 0 & \cdots & 0 & Q_{r}^{(-1)}(t-\theta)^{s_{r}} & 1 \\
               \end{pmatrix}\in \Mat_{(r+1)}(\KK[t]),
\end{equation}
then define a left $\KK[t,\sigma]$-module structure on $M$ by setting
\[ \sigma {\bf{m}}:=\Phi {\bf{m}} .\]

\end{definition}

It follows from the definition that $M$ is a Frobenius module.  We mention that this $M$ was first studied by Anderson-Thakur~\cite{AT09} in order to give a period  interpretation for the multiple zeta value $\zeta_{A}(\fs)$ (defined in \eqref{MZVdef}) when restricting $Q_{i}$ to the Anderson-Thakur polynomial $H_{s_{i}-1}$.  It was then revisited in \cite{C14, CPY14} for studying Carlitz multiple polylogarithms when restricting $Q_{i}$ to certain algebraic elements over $K$. 

We let $M'$ be the Frobenius submodule of $M$ which is the free $\KK[t]$-submodule of rank $r$ spanned by the basis $\left\{ m_{1},\ldots,m_{r}\right\}
$, where the action of $\sigma$ on
\begin{equation}\label{m'basis}
\bmm' := \left( m_{1},\ldots,m_{r}\right)^{\tr}\in \Mat_{r\times 1}(M)
\end{equation}
is represented by the matrix
\begin{equation}\label{E:Phi s'}
\Phi' :=
\begin{pmatrix}
                (t-\theta)^{s_{1}+\cdots+s_{r}}  &  &  &  \\
                Q_{1}^{(-1)}(t-\theta)^{s_{1}+\cdots+s_{r}}  & (t-\theta)^{s_{2}+\cdots+s_{r}}   &  &  \\
                  & \ddots & \ddots &  \\
                  &  & Q_{r-1}^{(-1)}(t-\theta)^{s_{r-1}+s_{r}} & (t-\theta)^{s_{r}}  \\
\end{pmatrix}\in \Mat_{r}(\KK[t]).
\end{equation}
Note that  $\Phi'$ is the square matrix of size $r$ cut from the upper left square of $\Phi$. Following~\cite{CM17a}, for each $1\leq i\leq r$ we put
\begin{equation}\label{E: d_i}
d_{i}:=s_{i}+\cdots+s_{r}  .
\end{equation}
One observes that $M'$ possesses the following properties (cf.~\cite{CPY14}):
\begin{itemize}
\item $M'$ is free of rank $r$ over $\KK[t]$.
\item $M'$ is free of rank $d:=d_{1}+\cdots+d_{r}$ over $\KK[\sigma]$.
\item $(t-\theta)^{n} M'\subset \sigma M'$ for all integers $n\geq d_{1}$ (see the proof of Proposition~\ref{P: delta0}).
\end{itemize}
Note that a natural $\KK[\sigma]$-basis of $M'$ is given by
\begin{equation}\label{sigmabasis}
\left\{ (t-\theta)^{d_{1}-1}m_{1},(t-\theta)^{d_{1}-2}m_{1}, \ldots,m_{1},\ldots,(t-\theta)^{d_{r}-1}m_{r}, (t-\theta)^{d_{r}-2}m_{r}, \ldots,m_{r}  \right\}
\end{equation}
and we label this basis as $\left\{ e_{1},\ldots,e_{d} \right\}$. Note further that $M'$ is a dual $t$-motive in the sense of \cite[Sec.~4.4.1]{ABP04}. 

\begin{definition}\label{D:dualtmotive}
A dual $t$-motive is a left $\KK[t,\sigma]$-module $\cM$ with the following three properties.
\begin{itemize}
\item $\cM$ is free of finite rank over $\KK[t]$.
\item $\cM$ is free of finite rank over $\KK[\sigma]$.
\item $(t-\theta)^{n} \cM \subset \sigma \cM$ for all $n \gg 0$.
\end{itemize}
\end{definition}

\subsubsection{The $t$-module associated to $M'$}\label{Subsec: t-module G} In this section, we quickly review Anderson's theory of $t$-frames, which allows one to construct the $t$-module $G:=({\GG_{a}^{d}}_{/ \KK},\rho)$ which is associated to the Frobenius module $M'$. Since $M'$ is free over $\KK[t]$ with basis $\left\{m_{1},\ldots,m_{r} \right\}$, we can identify $\Mat_{1\times r}(\KK[t])$ with $M'$:
\[ 
\begin{array}{rcl}
\Mat_{1\times r}(\KK[t])&\rightarrow& M'\\
(a_{1},\ldots,a_{r})&\mapsto& a_{1}m_{1}+\cdots+a_{r}m_{r} .
\end{array} \]
As $M'$ is also free over $\KK[\sigma]$ with basis $\left\{ e_{1},\ldots,e_{d}\right\}$, we can identify $M'$ with $\Mat_{1\times d}(\KK[\sigma])$:
\[
\begin{array}{ccl}
 M'&\rightarrow &\Mat_{1\times d}(\KK[\sigma]) \\
b_{1}e_{1}+\cdots+b_{d}e_{d}& \mapsto & (b_{1},\ldots,b_{d}).
\end{array}
\] Composing the two maps above, we have the following identification
\[ 
\begin{array}{rrcl}
\iota : & \Mat_{1\times r}(\KK[t])& \rightarrow & \Mat_{1\times d}(\KK[\sigma])\\
& (a_{1},\ldots,a_{r}) &\mapsto  & (b_{1},\ldots,b_{d})
\end{array}
 \]by expressing elements of $M'$ in terms of the fixed $\KK[t]$-basis and $\KK[\sigma]$-basis above.

We remark that if $x \in M'$ can be written as $x=a_1m_1 + \dots + a_rm_r$ as above  and ${\bf{m}}':=(m_{1},\ldots,m_{r})^{\tr}\in \Mat_{r \times 1}(M')$, then we have the equation
\[  \sigma x = \sigma (a_1,\dots,a_r){\bf{m}}' = (a_1,\dots,a_r)\twistinv \Phi' {\bf{m}}', \]
and thus under the identification $\Mat_{1\times r}(\KK[t])\rightarrow M'$ the action of $\sigma$ on $\Mat_{1\times r}(\KK[t])$ is given by
\begin{equation}\label{sigmaaction}
\sigma(a_1,\dots,a_r) = (a_1,\dots,a_r)\twistinv  \Phi'.
\end{equation}
We similarly observe that the action of $\sigma$ on $\Mat_{1\times d}(\KK[\sigma])$ is the diagonal action.

Next, we define maps $\delta_0,\delta_1: \Mat_{1\times d}(\KK[\sigma])= \Mat_{1\times d}(\KK)[\sigma]\to \KK^d$ by
\begin{equation}\label{E: Def of delta0}
\delta_0\left (\sum_{i\geq 0} \bc_i \sigma^i\right ) := \bc_0^{\tr}
\end{equation}
\begin{equation}\label{E: Def of delta1}
\delta_1\left (\sum_{i\geq 0} \bc_i \sigma^i\right ) = \delta_1\left (\sum_{i\geq 0} \sigma^i \bc_i^{(i)} \right ) := \sum_{i\geq 0}\left (\bc_i\twisti\right )^{\tr}.
\end{equation}
Note that in~\cite{CPY14} we denote by  $\Delta:M'\twoheadrightarrow \KK^{d}$ the $\FF_{q}$-linear composite map  of $M'\cong \Mat_{1\times d}(\KK[\sigma])$ and $\delta_{1}$. We then have $\Ker \Delta=(\sigma-1)M'$, and so have the identification as $\FF_{q}$-vector spaces
\begin{equation}\label{Amotivediagram}
\begin{diagram}
M'/(\sigma-1)M' &\rTo^{\Delta} & \KK^{d}  \\
\dTo^{a} & & \dTo_{\,\rho(a)} \\
M'/(\sigma-1)M' &\rTo^{\Delta} & \KK^{d}
\end{diagram}.
\end{equation}
Since we have an $\FF_{q}[t]$-module structure on $M'/(\sigma-1)M'$ given by multiplication, we can equip an induced left $\FF_{q}[t]$-module structure on $\KK^{d}$ via the identification above, which we denote as $\rho$.  As the $\KK$-valued points of ${\GG_{a}^{d}}_{/ \KK}$ is $\KK^{d}$ which is Zariski dense inside the algebraic group ${\GG_{a}^{d}}_{/ \KK}$, the $\FF_{q}[t]$-module structure on $\KK^{d}$ above gives rise to an $\FF_{q}$-linear ring homomorphism
\[ \rho: \FF_{q}[t]\rightarrow \Mat_{d}(\KK)[\tau]. \]
This defines the $t$-module associated to $M'$, which we will denote by $G:=({\GG_{a}^{d}}_{/ \KK},\rho)$. For more details on this construction, see \cite[\S 5.2]{HJ16}.

\subsection{One crucial result of Anderson} Now we take the field $\KK=\CC_{\infty}$, and consider the $\FF_{q}$-linear map
\[ \delta_{0}\circ \iota: \Mat_{1\times r}(\CC_{\infty}[t]) \rightarrow \CC_{\infty}^{d}. \]
In order to give an explicit description of the above map, we make the following definition first.

\begin{definition}
Given a function $f(t) \in \TT_\theta$ which has a Taylor series centered at $\theta$ that is given by
\[ f(t) =  \sum_{n=0}^{\infty} c_{n}(t-\theta)^{n}, \]
we define the $k$-th jet of $f$ at the point $\theta$ to be the polynomial 

\[J^k_\theta(f) = c_k(t-\theta)^k+\cdots+c_1(t-\theta)+c_0  \in \C_\infty[t].\]
\end{definition}

Note that by~(\ref{E: eta}), power series in $\TT_\theta$ always have such a Taylor series as given above, and so our definition is natural.  We now describe the explicit formula for the map $\delta_0\circ \iota$.

\begin{proposition}\label{P: delta0}
For a vector $\ba = (a_1, \dots , a_r) \in \Mat_{1\times r}(\CC_{\infty}[t])$, for each coordinate $a_{i}$, we form the $(d_i - 1)$-st jet at $\theta$ and label these as
\[ J^{d_i-1}_\theta (a_{i}) = c_{i,1}(t-\theta)^{d_{i}-1}+c_{i,2}(t-\theta)^{d_{i}-2}+\cdots+c_{i,d_{i}}.\]
Then the map $\delta_0 \circ \iota$ is given by 
\[ 
\delta_0\circ \iota(\ba) = 
(c_{1,1}, c_{1,2}, \ldots, c_{1,d_{1}}, c_{2,1}, c_{2,2}, \ldots, c_{2,d_{2}}, \ldots, c_{r,1}, c_{r,2}, \ldots, c_{r,d_{r}})^{\tr}. 
\]
\end{proposition}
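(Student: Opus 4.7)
The plan is to identify $\delta_0 \circ \iota$ with the canonical projection $M' \twoheadrightarrow M'/\sigma M' \cong \CC_\infty^d$ read off against the $\KK[\sigma]$-basis $\{e_1, \ldots, e_d\}$. Since $M'$ is free of rank $d$ over $\KK[\sigma]$, we have $M'/\sigma M' \cong \bigoplus_{j=1}^d \CC_\infty\, \overline{e_j}$, and by construction $\delta_0$ extracts exactly the $\sigma^0$-coefficients; hence computing $\delta_0 \circ \iota(\ba)$ reduces to reducing $\sum_i a_i m_i$ modulo $\sigma M'$ and reading off the coefficients of the classes $\overline{e_j}$.

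The key combinatorial input I would prove is the containment
\[
(t-\theta)^{d_i}\, \CC_\infty[t] \cdot m_i \subseteq \sigma M' \qquad (1 \leq i \leq r),
\]
by induction on $i$. For $i=1$, the first row of $\Phi'$ gives $\sigma m_1 = (t-\theta)^{d_1} m_1$, so for any $f \in \CC_\infty[t]$ one has $f(t-\theta)^{d_1} m_1 = (f \sigma) m_1 = \sigma (f^{(1)} m_1) \in \sigma M'$, using the twisting relation $f \sigma = \sigma f^{(1)}$. For $i \geq 2$, the $i$-th row of $\Phi'$ gives
\[
(t-\theta)^{d_i} m_i = \sigma m_i - Q_{i-1}^{(-1)}(t-\theta)^{d_{i-1}} m_{i-1};
\]
multiplying by any $f \in \CC_\infty[t]$, the first summand lies in $\sigma M'$ as above, while the second lies in $(t-\theta)^{d_{i-1}} \CC_\infty[t] \cdot m_{i-1} \subseteq \sigma M'$ by the inductive hypothesis.

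Granting this, Euclidean division of $a_i$ by $(t-\theta)^{d_i}$ yields $a_i = J^{d_i-1}_\theta(a_i) + (t-\theta)^{d_i} b_i$ for some $b_i \in \CC_\infty[t]$, so $a_i m_i \equiv J^{d_i-1}_\theta(a_i) m_i \pmod{\sigma M'}$. Expanding
\[
J^{d_i-1}_\theta(a_i)\, m_i = \sum_{j=1}^{d_i} c_{i,j} (t-\theta)^{d_i-j} m_i
\]
realizes this residue as a $\CC_\infty$-linear combination of the basis vectors $e_\ell$ at positions $d_1 + \cdots + d_{i-1}+j$ (for $1 \leq j \leq d_i$), with coefficients $c_{i,j}$. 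Concatenating over $i = 1, \ldots, r$ produces the asserted column vector.

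The main technical subtlety I anticipate is the inductive step for $i \geq 2$: the relation for $\sigma m_i$ introduces a cross-term involving $m_{i-1}$, which could in principle contaminate the $\sigma^0$-projection of lower blocks. The crucial point is that this cross-term already carries the factor $(t-\theta)^{d_{i-1}}$, so it is absorbed directly into $\sigma M'$ by the previously handled case; thus no cross-term ever survives to contribute to $\delta_0$, and only the jet piece $J^{d_i-1}_\theta(a_i) m_i$ contributes, yielding the stated formula.
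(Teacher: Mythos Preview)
Your proposal is correct and follows essentially the same approach as the paper: both reduce the computation to showing $(t-\theta)^{d_i}\,\CC_\infty[t]\cdot m_i \subseteq \sigma M'$ by induction on $i$, using the $i$-th row relation of $\Phi'$ to handle the cross-term via the inductive hypothesis. The paper phrases the key containment as $(t-\theta)^N m_i \in \sigma M'$ for all $N \geq d_i$, but this is equivalent to your formulation, and the inductive argument is the same.
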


\begin{proof} 
By the definition of $\delta_{0}$, it suffices to show for each $i$ with $1\leq i \leq r$ that, $(t-\theta)^{N} m_{i} \in \sigma M'$  for all $N \geq d_{i}$. We prove this assertion by induction on $i$.
For $i = 1$, the claim holds since we have
\begin{align*}
(t-\theta)^{N} m_{1} = (t-\theta)^{N-d_{1}} \cdot (t-\theta)^{d_{1}} m_{1}
= (t-\theta)^{N-d_{1}} \cdot \sigma m_{1}
= \sigma (t-\theta^{q})^{N-d_{1}} m_{1} \in \sigma M'.
\end{align*}
Let $i \geq 2$, and assume that there exists $m \in M'$ such that $(t-\theta)^{d_{i-1}} m_{i-1} = \sigma m$.
Then we have
\begin{align*}
(t-\theta)^{N} m_{i} &= (t-\theta)^{N-d_{i}} \cdot (t-\theta)^{d_{i}} m_{i}
= (t-\theta)^{N-d_{i}} \left( \sigma m_i - Q_{i-1}\twistinv (t-\theta)^{d_{i-1}} m_{i-1} \right) \\
&= (t-\theta)^{N-d_{i}} \left( \sigma m_{i} - Q_{i-1}\twistinv \sigma m \right)
= \sigma (t-\theta^{q})^{N-d_{i}} \left( m_{i} - Q_{i-1} m \right) \in \sigma M'.
\end{align*}
\end{proof}

In other words, the map $\delta_{0}\circ \iota$ factors through the following map still denoted by $\delta_{0}\circ \iota$:
\[ \CC_{\infty}[t]/\left((t-\theta)^{d_{1}}\right)\times \cdots\times \CC_{\infty}[t]/\left((t-\theta)^{d_{r}}\right)\rightarrow \CC_{\infty}^{d}. \]
Anderson gives a theorem (see~\cite{HJ16} and~\cite{NP18}) that states that there exists a unique extension of the composition $\delta_0\circ \iota$ to vectors over the Tate algebra $\TT_\theta$ and he calls this extension 
\[\widehat{\delta_0\circ \iota}:\Mat_{1\times r}(\TT_\theta) \to \CC_{\infty}^d. \]
 With the above analysis of $\delta_0\circ \iota$ we can see concretely that the procedure for calculating the extended map $\widehat{\delta_0\circ \iota}$ is the same as for the original.  Namely, $\widehat{\delta_0\circ \iota}$ is given by composing the following maps
 \[ \Mat_{1\times r}(\TT_{\theta})\hookrightarrow \Mat_{1\times r}(\power{\CC_{\infty}}{t-\theta})  \twoheadrightarrow  \prod_{i=1}^{r} \power{\CC_{\infty}}{t-\theta}/\left( (t-\theta)^{d_{i}}\right) \cong  \prod_{i=1}^{r} \CC_{\infty}[t]/\left( (t-\theta)^{d_{i}}\right) \xrightarrow[]{\delta_{0}\circ \iota} \CC_{\infty}^{d}, \]
 where the first embedding is via the following natural embedding componentwise
 \begin{equation}\label{E: eta}
 \begin{array}{rccl}
\eta: &\TT_{\theta} &\hookrightarrow& \power{\CC_{\infty}}{t-\theta} \\
&&&\\
&{\displaystyle \sum_{i=0}^{\infty}b_{i} t^{i}}& \mapsto &{\displaystyle \eta(\sum_{i=0}^{\infty}b_{i} t^{i}):= \sum_{i=0}^{\infty}\left( \sum_{j=i}^{\infty} {{j}\choose {i}}  b_{j} \theta^{j-i}  \right) (t-\theta)^{i}}. 
 \end{array}
 \end{equation}
 Note that $\eta$ is injective. Indeed, assume that $\eta({\displaystyle \sum_{i=0}^{\infty}}b_{i} t^{i}) = 0$, and fix $i_{0} \geq 0$. Since we have ${\displaystyle \sum_{j=i_{0}}^{\infty} {{j}\choose {i_{0}}}} b_{j} \theta^{j-i_{0}} = 0$ and the absolute value $|\cdot|_{\infty}$ is non-archimedean, there exists $i_{1} > i_{0}$ such that
\[{ |b_{i_{0}} |_{\infty} \leq \left|{{i_{1}}\choose {i_{0}}} b_{i_{1}} \theta^{i_{1}-i_{0}} \right|_{\infty} }\]
and so
\[ |b_{i_{0}} \theta^{i_{0}}|_{\infty} \leq \left|{{i_{1}}\choose {i_{0}}} b_{i_{1}} \theta^{i_{1}} \right|_{\infty} \leq |b_{i_{1}} \theta^{i_{1}}|_{\infty}. \]
Repeating this argument, we can take $i_{0} < i_{1} < i_{2} < \cdots$ such that $|b_{i_{0}} \theta^{i_{0}}|_{\infty} \leq |b_{i_{1}} \theta^{i_{1}}|_{\infty} \leq |b_{i_{2}} \theta^{i_{2}}|_{\infty} \leq \cdots$. Since $|b_{i} \theta^{i}|_{\infty} \to 0$, we have $b_{i_{0}} = 0$. Therefore, $\eta$ is injective.
Moreover, we can verify that the image of $\eta$ is
\[ \left \{ \sum_{i=0}^\infty b_i (t-\theta)^i \in \power{\CC_{\infty}}{t-\theta} \biggm| \big\lvert b_i \theta^i \big\rvert_{\infty} \to 0 \right \}, \]
and we can construct $\eta^{-1}$ directly. 

Since it does not cause any confusion, we will drop the hat notation and just use $\delta_0\circ \iota$ for the extension. We now state Anderson's theorem from his unpublished notes, which one can find the statement and its proof in~\cite[Cor.~2.5.23]{HJ16} and~\cite{NP18}.

\begin{theorem}[Anderson]\label{T: Anderson}
Let $\Phi'$ define the dual $t$-motive given in \eqref{E:Phi s'}, and let its associated $t$-module be denoted by $G$.  For all $\bg \in \Mat_{1\times r}(\TT_\theta)$ and $\bw \in \Mat_{1\times r}(\C_\infty[t])$ satisfying the functional equation
\begin{equation}\label{AndersonCondition}
\bg\invtwist \Phi' - \bg = \bw
\end{equation}
one has
\begin{equation}\label{AndersonTheorem}
\Exp_G \left(\delta_0 \circ \iota(\bg+\bw) \right)=\delta_{1}\circ \iota(\bw).
\end{equation}
\end{theorem}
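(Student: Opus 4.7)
Proof proposal:

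The plan is to prove the identity by a direct power-series computation that relates the $\tau$-expansion of $\Exp_G$ to the $\sigma$-difference equation $\bg^{(-1)}\Phi' - \bg = \bw$ through Taylor expansions at $t=\theta$.

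First, I reinterpret the hypothesis as $\bg+\bw = \bg^{(-1)}\Phi'$ in $\TT_\theta^{1\times r}$ and compute the left-hand side using the jet-based formula for $\widehat{\delta_0\circ\iota}$ supplied by Proposition~\ref{P: delta0}.  The $j$-th column of $\Phi'$ has every nonzero entry divisible by $(t-\theta)^{d_j}$, so the $j$-th coordinate of $\bg^{(-1)}\Phi'$ carries $(t-\theta)^{d_j}$ as a common factor; this cancels any pole at $t=\theta$ that the Frobenius twist $\bg^{(-1)}$ may develop, and the first $d_j$ Taylor coefficients of each coordinate of $\bg+\bw$ at $t=\theta$ are well-defined and can be read off explicitly from the Taylor expansion of $\bg$ itself.

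Next I expand the right-hand side.  Since $\bw\in\C_\infty[t]^{1\times r}$ is a polynomial, its image $\iota(\bw)=\sum_{i=0}^{N}\bd_i\sigma^i$ is a finite sum and $\delta_1\circ\iota(\bw) = \sum_{i=0}^{N}(\bd_i^{(i)})^{\tr}$.  On the left I write $\Exp_G(\bz)=\sum_{n\geq 0} e_n\bz^{(n)}$ with $e_0=I_d$, the matrices $e_n \in \Mat_d(\C_\infty)$ being uniquely determined by the functional equation $\Exp_G\circ\partial\rho(t) = \rho(t)\circ\Exp_G$, and match the two sides term by term.  Iteratively applying the relation $\bg = \bg^{(-1)}\Phi' - \bw$ expresses the Taylor coefficients of $\bg$ at $\theta$ in terms of Frobenius-inverse twists of the $\bd_i$'s; applying $\Exp_G$ then transports each contribution through the correct number of Frobenius twists, so that the $n$-th term collapses precisely onto $(\bd_n^{(n)})^{\tr}$.

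\emph{Main obstacle.}  The most delicate step is controlling the interplay between Frobenius twisting and the Tate-algebra topology.  Although $\bg\in\TT_\theta^{1\times r}$, its twist $\bg^{(-1)}$ need not lie in $\TT_\theta^{1\times r}$ due to possible new poles at $t=\theta$; only after multiplication by the $(t-\theta)^{d_j}$-factors in the $j$-th column of $\Phi'$ does one return to $\TT_\theta$.  Verifying that these cancellations are uniform in the Gauss norm $\lVert\cdot\rVert_\theta$, and that $\widehat{\delta_0\circ\iota}$ commutes with the iterative expansion of $\bg$ as a series in $\sigma$-twists of $\bw$, is the main technical hurdle.  A cleaner alternative would be to introduce Anderson's generating function $\cE(\bz;t) := \sum_{i\geq 0}\Exp_G(\partial\rho(t)^{-i-1}\bz)\,t^i$ applied to $\bz := \widehat{\delta_0\circ\iota}(\bg+\bw)$, check that $\cE(\bz;t)$ satisfies the same $\sigma$-difference equation as $\bg$, and then conclude by uniqueness after comparing initial data; in either route the heart of the argument is the same cancellation between $\Phi'$ and the Frobenius twist.
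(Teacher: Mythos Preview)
The paper does not supply its own proof of this theorem: it is quoted as a result of Anderson from unpublished notes, with the proof deferred to \cite{HJ16} (Cor.~2.5.23) and \cite{NP18}.  So there is no in-paper argument to compare against; I can only assess your proposal on its own terms.

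Your primary route has a genuine gap.  After rewriting $\bg+\bw=\bg^{(-1)}\Phi'$ you propose to iterate $\bg=\bg^{(-1)}\Phi'-\bw$ and then let $\Exp_G$ ``transport each contribution through the correct number of Frobenius twists, so that the $n$-th term collapses precisely onto $(\bd_n^{(n)})^{\tr}$.''  But iterating in this direction produces
\[
\bg=\bg^{(-n)}\Phi'^{(-n+1)}\cdots\Phi'^{(-1)}\Phi'-\sum_{k=0}^{n-1}\bw^{(-k)}\Phi'^{(-k+1)}\cdots\Phi',
\]
which involves \emph{negative} Frobenius twists of $\bg$ and $\bw$; none of these need lie in $\TT_\theta$, and the $\tau$-expansion $\Exp_G(\bz)=\sum_{n\ge 0}e_n\bz^{(n)}$ is built from \emph{positive} Frobenius, so there is no mechanism for the claimed collapse.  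The sentence about the $n$-th term collapsing onto $(\bd_n^{(n)})^{\tr}$ is precisely the content of the theorem and is asserted, not argued.  (A smaller point: $\bg^{(-1)}$ is a formal power series, so ``poles at $t=\theta$'' is not the right description; the issue is failure of convergence on $|t|_\infty\le|\theta|_\infty$, i.e.\ $\bg^{(-1)}\notin\TT_\theta$ in general.)

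Your alternative via Anderson generating series is much closer in spirit to the proofs in \cite{HJ16} and \cite{NP18}.  There one shows that $\Exp_G\circ\widehat{\delta_0\circ\iota}$ and a suitable continuous extension of $\delta_1\circ\iota$ agree on all of $\Mat_{1\times r}(\TT_\theta)$, by checking that both are $\FF_q$-linear, continuous, intertwine the $t$-action with $\rho(t)$, and agree on a generating set; the theorem then follows by specializing to $\bh=\bg+\bw=\sigma\bg$.  Your sketch names the right object but does not carry out any of these verifications, and the ``uniqueness after comparing initial data'' step hides exactly the work that needs to be done (what the initial data is, and why the difference equation determines the solution in $\TT_\theta$).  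If you pursue this route, the substantive steps to supply are: (i) the identities $\delta_0\circ\iota(t\cdot m)=\partial\rho(t)\,\delta_0\circ\iota(m)$ and $\delta_1\circ\iota(t\cdot m)=\rho(t)\,\delta_1\circ\iota(m)$ on $M'$, (ii) the existence and continuity of the extension of $\delta_1\circ\iota$, and (iii) the comparison of the two maps via the functional equation of $\Exp_G$.
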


\section{Hyperderivatives of $t$-motivic Carlitz multiple star polylogarithms}\label{Sec: Hyperderivatives of CMSPL}
Carlitz multiple polylogarithms (CMPL's) were introduced by the first author in \cite{C14} and are generalizations of the Carlitz polylogarithm of Anderson and Thakur from \cite{AT90}.  Carlitz multiple star polylogarithms (CMSPL's) were introduced by the first and third authors in~\cite{CM17b} in order to connect MZV's with logarithms of $t$-modules.  The primary goal of this section is to derive the formula stated in Theorem~\ref{T: IntrodT1}, which connects the logarithmic vector \eqref{D:logvector} with these CMSPL's.

\subsection{$t$-motivic Carlitz multiple star polylogarithms}\label{Sub: t-motivic CMSPL}
  For $\fs = (s_1,\dots,s_r) \in \N^r$, we define the associated ($r$-variable) CMPL and CMSPL by
\begin{equation}\label{Lidef}
\Li_\fs(z_1,\dots,z_r) = \sum_{i_1>\dots>i_r \geq 0} \frac{z_1^{q^{i_1}}\dots z_r^{q^{i_r}}}{L_{i_1}^{s_1}\dots L_{i_r}^{s_r}}\in \power{K}{z_{1},\ldots,z_{r}}
\end{equation}
and
\begin{equation}\label{Lisdef}
\Lis_\fs(z_1,\dots,z_r) = \sum_{i_1\geq \dots \geq i_r \geq 0} \frac{z_1^{q^{i_1}}\dots z_r^{q^{i_r}}}{L_{i_1}^{s_1}\dots L_{i_r}^{s_r}}\in \power{K}{z_{1},\ldots,z_{r}}.
\end{equation}
Also the  $t$-motivic CMPL and $t$-motivic CMSPL are defined by
\begin{equation}\label{tLidef}
\fLi_\fs(t;z_1,\dots,z_r) = \sum_{i_1>\dots>i_r \geq 0} \frac{z_1^{q^{i_1}}\dots z_r^{q^{i_r}}}{\LL_{i_1}^{s_1}\dots \LL_{i_r}^{s_r}}\in \power{K}{t,z_{1},\ldots,z_{r}}
\end{equation}
and
\begin{equation}\label{tLisdef}
\fLis_{\fs}(t;z_1,\dots,z_r) = \sum_{i_1\geq \dots \geq i_r \geq 0} \frac{z_1^{q^{i_1}}\dots z_r^{q^{i_r}}}{\LL_{i_1}^{s_1}\dots \LL_{i_r}^{s_r}}\in \power{K}{t,z_{1},\ldots,z_{r}},
\end{equation}
and observe that $\fLi_\fs|_{t=\theta} = \Li_\fs$ and $\fLis_\fs|_{t=\theta} = \Lis_\fs$.

Given $\bu=(u_{1},\ldots,u_{r})^{\tr}\in \oK^{r}$, we note that  $\dfrac{u_1^{q^{i_1}} \cdots u_r^{q^{i_r}}}{\LL_{i_1}^{s_1} \cdots \LL_{i_r}^{s_r}} \in \TT_{\theta}$, and if
\begin{equation}\label{E: hypothesis on u}
|u_1|_{\infty} < q^{\frac{s_1 q}{q-1}} \  {\textnormal{and}} \ |u_i|_{\infty} \leq q^{\frac{s_i q}{q-1}} \ \mathrm{for \ each} \ 2 \leq i \leq r,
\end{equation}
then  $\left \lVert \dfrac{u_1^{q^{i_1}} \cdots u_r^{q^{i_r}}}{\LL_{i_1}^{s_1} \cdots \LL_{i_r}^{s_r}} \right \rVert_{\theta} \to 0, \ (i_1 \to \infty)$. Thus $\fLi_{\fs}(t; u_1, \dots, u_r)$ and $\fLis_{\fs}(t; u_1, \dots, u_r)$ converge in $\TT_{\theta}$ when specializating $z_{i}=u_{i}$ for each $i$, and in this situation we have 
\[ \fLi_{\fs}(\theta; u_{1},\ldots,u_{r})=\Li_{\fs}(\bu)\hbox{ and }\fLis_{\fs}(\theta;u_{1},\ldots,u_{r})=\Lis_{\fs}(\bu).
\]
In this situation, we simplify the notation by putting

\[ \fLi_{\fs,\bu}(t):=\fLi_{\fs}(t;u_{1},\ldots,u_{r})\in \power{\oK}{t} \hbox{ and } \fLis_{\fs,\bu}(t):=\fLis_{\fs}(t;u_{1},\ldots,u_{r})\in \power{\oK}{t}. \]

When $r=1$ and $s=1$, the series $\fLi_{1,u}(t)$ for $u\in \overline{K}^{\times}$ with $|u|_{\infty}<q^{\frac{q}{q-1}}$ was introduced by Papanikolas~\cite{P08} to relate the Carlitz logarithm at $u$ to a period of certain $t$-motive.  It was then generalized by the first author and Yu~\cite{CY07} in the case of $r=1$ and $s>1$ to study Carlitz zeta values. The general series $\fLi_{\fs,\bu}$ was further studied by the first author~\cite{C14} to relate the CMPL's at algebraic points to periods of certain $t$-motives. These series play an essential role when applying the ABP-criterion~\cite{ABP04} for the CMPL's at algebraic points in question. See also~\cite{CPY14, M17}.

\subsection{Hyperderivatives}\label{SS:hyperderivatives}
The main formula in this section expresses the logarithmic vector \eqref{D:logvector} in terms of hyperderivatives of $t$-motivic CMSPL's.  In this section we outline some of the basic theory for hyperderivatives. For any non-negative integer $n$, we define the $n$-th hyperderivative (with respect to $t$)  $\partial_{t}^{n}:\laurent{\CC_{\infty}}{t}\rightarrow \laurent{\CC_{\infty}}{t}$
by

\begin{equation}\label{E: DefHyperderivative}
\partial_{t}^{n} \left( \sum_{i=i_{0}}^{\infty} a_{i}t^{i} \right):=\sum_{i=i_{0}}^{\infty} {{i}\choose{n}}a_{i}t^{i-n},  
\end{equation}
where ${{i}\choose{n}}$ refers to the usual binomial coefficient, but modulo $p$. From the definition one sees that $\partial_{t}^{n}$ is a $\CC_{\infty}$-linear operator and that $\partial_{t}^{0}$ is the identity map. We further note that the hyperderivatives  satisfy the product rule: for $n\in \NN$ and $f,g\in \laurent{\CC_{\infty}}{t}$,
\begin{equation}\label{E:ProdRule}
\partial_{t}^{n}\left( fg\right)=\sum_{i=0}^{n} \partial_{t}^{i} (f)\cdot \partial_{t}^{n-i}(g).
\end{equation}

 In this paper, we are interested in Taylor coefficients of the series expansion of $f\in \TT_{\theta}$ at $t=\theta$, and the following proposition shows that such Taylor coefficients are expressed as the hyperderivatives of $f$ evaluated at $t=\theta$ (see~\cite[Lem.~2.4.1]{Pp} in the case of rational functions and \cite[Cor.~2.7]{US98} for the general case).

\begin{proposition}\label{P: Taylor coeff}
For any $f\in \TT_{\theta}$, we write $\eta(f)=\sum_{i=0}^{\infty} a_{i}(t-\theta)^{i}$. Then for any non-negative integer $n$ we have
\[\partial_{t}^{n}(f) \in \TT_{\theta} \ \mathrm{and} \ a_{n}=\partial_{t}^{n}(f)|_{t=\theta}  .\]
\end{proposition}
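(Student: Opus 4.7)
The plan is to verify both claims by a direct computation with the defining formulas, leveraging only that $\lvert \binom{i}{n}\rvert_{\infty} \leq 1$ since binomial coefficients reduce to elements of $\mathbb{F}_p$. Write $f = \sum_{i=0}^{\infty} b_i t^i \in \TT_\theta$, so that by definition $|b_i \theta^i|_{\infty} \to 0$.

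First, I would handle the convergence claim $\partial_t^n(f) \in \TT_\theta$. Applying the definition of $\partial_t^n$ gives
\[ \partial_t^n(f) = \sum_{i \geq n} \binom{i}{n} b_i t^{i-n} = \sum_{k \geq 0} \binom{k+n}{n} b_{k+n} t^k, \]
after reindexing with $k = i-n$. Then
\[ \left\lvert \binom{k+n}{n} b_{k+n} \theta^k \right\rvert_{\infty} \leq |b_{k+n} \theta^{k+n}|_{\infty} \cdot |\theta|_{\infty}^{-n} \to 0 \quad (k \to \infty), \]
which shows $\partial_t^n(f) \in \TT_\theta$.

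Next, I would identify $\partial_t^n(f)|_{t=\theta}$ with $a_n$. By the formula defining $\eta$ in~\eqref{E: eta}, the value at $t = \theta$ of any element of $\TT_\theta$ is its constant coefficient in the expansion around $\theta$, which is the $i=0$ term of $\eta$. Applied to $\partial_t^n(f)$ (using the series expression from the previous step), the constant coefficient is
\[ \sum_{k=0}^{\infty} \binom{k+n}{n} b_{k+n} \theta^k = \sum_{j \geq n} \binom{j}{n} b_j \theta^{j-n}. \]
On the other hand, reading off the $i = n$ coefficient directly from the definition of $\eta(f) = \sum_i a_i (t-\theta)^i$ gives
\[ a_n = \sum_{j \geq n} \binom{j}{n} b_j \theta^{j-n}, \]
so the two expressions agree and $\partial_t^n(f)|_{t=\theta} = a_n$.

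The only potential obstacle is justifying the manipulation of the double sums and the evaluation of $\eta(\partial_t^n(f))$ at $t = \theta$ via its constant coefficient. Both are straightforward consequences of the fact that $\TT_\theta$ is complete with respect to the Gauss norm $\lVert \cdot \rVert_\theta$, combined with the non-archimedean inequality and $|\binom{i}{n}|_\infty \leq 1$; there is no conceptual obstacle beyond this bookkeeping. An alternative, more conceptual, route would be to check that $\partial_t^n$ commutes with $\eta$ and acts on $(t-\theta)^i$ by $\binom{i}{n}(t-\theta)^{i-n}$, then take the constant term; but this ultimately requires the same convergence argument, so the direct computation above seems shortest.
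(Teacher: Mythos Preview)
Your proof is correct and self-contained. The paper does not actually supply a proof of this proposition; it only cites \cite[Lem.~2.4.1]{Pp} for the rational-function case and \cite[Cor.~2.7]{US98} for the general case. Your direct computation---using the coefficient formula for $\eta$ in~\eqref{E: eta}, the definition of $\partial_t^n$, and the bound $\lvert \binom{i}{n}\rvert_\infty \leq 1$---is exactly the elementary argument one would expect those references to contain, and it goes through without any gaps. So in effect you have filled in what the paper leaves to citation.
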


\begin{remark}
Since under the hypothesis~(\ref{E: hypothesis on u}) the above series $\fLi_{\fs,\bu}$ and $\fLis_{\fs,\bu}$ are in $\TT_{\theta}$, the hyperderivatives $\partial_{t}^{j}\fLi_{\fs,\bu} $ and  $\partial_{t}^{j}\fLis_{\fs,\bu}$ are still in $\TT_{\theta}$ for every positive integer $j$, whence we can specialize at $t=\theta$.
\end{remark}

\subsection{Statement of the formulae}

From now on, we fix two $r$-tuples $\fs=(s_{1},\ldots,s_{r})\in \NN^{r}$ and $\bu=(u_{1},\ldots,u_{r})\in \oK^{r}$.

\subsubsection{$t$-modules associated to CMSPL's}\label{Sub:Gsu} We note that the $t$-module $G=(\GG_{a}^{d},\rho)$ associated to the dual $t$-motive $M'$ in Sec.~\ref{Subsec: t-module G} when replacing $Q_i$ by $u_{i}$ can be explicitly written down. We will use this explicit description of $G$ heavily going forward, so we take a moment to recall it from \cite{CM17a}. Let $d_{i}$ be given in \eqref{E: d_i} for $1\leq i\leq r$, and put $d:=d_{1}+\cdots+d_{r}$. Let $B$ be a $d \times d$-matrix of the form

\[
\left( \begin{array}{c|c|c}
B[11] & \cdots & B[1r] \\ \hline
\vdots & & \vdots \\ \hline
B[r1] & \cdots & B[rr]
\end{array} \right),
\]
where $B[\ell m]$ is a $d_{\ell} \times d_{m}$-matrix for each $\ell$ and $m$ and we call $B[\ell m]$ the $(\ell, m)$-th block sub-matrix of $B$.

For $1 \leq \ell \leq m \leq r$, we define the following matrices:
\begin{enumerate}
\item $N_{\ell} \in \Mat_{d_{\ell}}(\FF_{q})$ is the matrix with $1$'s along the first super diagonal and $0$'s elsewhere.
\item $N \in \Mat_{d}(\FF_{q})$ is the block diagonal matrix with $N_\ell$ along the diagonal and $0$ blocks elsewhere.
\item $E[\ell m] \in \Mat_{d_{\ell} \times d_{m}}(\oK)$ is the matrix with a $1$ in the lower left coordinate if $\ell = m$ and with $(-1)^{m-\ell} \prod_{e=\ell}^{m-1} u_{e}$ in the lower left coordinate if $\ell <m$ and $0$'s elsewhere ($E[\ell m]=0$ otherwise).
\item $E \in \Mat_{d}(\oK)$ is the upper triangular block matrix with $E[\ell m]$ in each block.
\end{enumerate}

We then define the $t$-module $G = G_{\fs, \bu} := (\GG_{a}^{d}, \rho)$ by
\begin{equation}\label{E:Explicit t-moduleCMPL}
  \rho(t) = \theta I_{d} + N + E \tau
  \in \Mat_{d}(\oK[\tau]),
\end{equation}
and note that $G$ depends  only on $u_{1},\ldots,u_{r-1}$. Finally, we define the special point
\begin{equation}\label{E:v_s,u} 
\begin{array}{rrcccccccccccccl}
& & \multicolumn{4}{c}{\overbrace{\hspace{10.0em}}^{d_{1}}} & \multicolumn{4}{c}{\overbrace{\hspace{10.0em}}^{d_{2}}} & & \multicolumn{4}{c}{\overbrace{\hspace{4.5em}}^{d_{r}}} & \\
\bv = \bv_{\fs, \bu} := \hspace{-0.5em} & \ldelim({1}{0em} & 0, \hspace{-0.6em} & \ldots, \hspace{-0.6em} & 0, \hspace{-0.6em} & (-1)^{r-1} u_{1} \cdots u_{r}, \hspace{-0.6em} & 0, \hspace{-0.6em} & \ldots, \hspace{-0.6em} & 0, \hspace{-0.6em} & (-1)^{r-2} u_{2} \cdots u_{r}, \hspace{-0.6em} & \ldots, \hspace{-0.6em} & 0, \hspace{-0.6em} & \ldots, \hspace{-0.6em} & 0, \hspace{-0.6em} & u_{r} & \hspace{-0.6em} \rdelim){1}{0.5em}^{\tr} \in G(\oK).
\end{array} 
\end{equation}
It is not hard to see that either the $t$-module $G$ is $\bC^{\otimes d_{1}}$ if $r=1$, or that $G$ is an iterated extension of the tensor powers of the Carlitz module if $r>1$.   Finally, we note that if we take $\bu=(u_{1},\ldots,u_{r})\in A^{r}$, then the $t$-module $G$ is defined over $A$ in the sense that $\rho(t)\in \Mat_{d}(A[\tau])$, and $\bv\in G(A)$. 

\begin{remark}\label{Rem: Ext1}
Let $\cF$ be the category of Frobenius modules over $\oK$ with morphisms given by left $\oK[t,\sigma]$-module homomorphisms. One then sees that $M$ is an extension of ${\bf{1}}$ by $M'$, i.e., $M\in \Ext_{\cF}^{1}\left( {\bf{1}},M' \right)$. We can equip an $\FF_{q}[t]$-module structure on $ \Ext_{\cF}^{1}\left( {\bf{1}},M' \right)$ and  have the following isomorphisms as $\FF_{q}[t]$-modules due to Anderson (see~\cite{CPY14}):
\[\Ext_{\cF}^{1}\left( {\bf{1}},M' \right) \cong M'/(\sigma-1)M' \cong G(\oK).  \]
The special point $\bv$  is the image of $M$ under the composite of the isomorphisms above.  For details, see \cite{CPY14, CM17a}.
\end{remark}

\subsubsection{The explicit formulae}

In \cite{CM17b}, CMSPL's are related to certain coordinates of the logarithm of $G$ evaluated at $\bv$ under certain assumptions on the absolute values of the coordinates of $\bu$. We first recall the result as follows.

\begin{theorem}[{\cite[Thm.~4.2.3]{CM17b}}]\label{T: Thm4.2.3 of CM17}
Given any $\fs=(s_{1},\ldots,s_{r})\in \NN^{r}$, we let $\bu=(u_{1},\ldots,u_{r})\in \oK^{r}$ with $|u_i|_{\infty} \leq q^{\frac{s_i q}{q-1}}$ for each $1 \leq i \leq r-1$ and $|u_r|_{\infty} < q^{\frac{s_r q}{q-1}}$.  Let $G$ and $\bv$ be defined in \eqref{E:Explicit t-moduleCMPL} and \eqref{E:v_s,u} respectively using $\fs$ and $\bu$. Then $\Log_{G}$ converges $\infty$-adically at $\bv$ and we have the formula
\[
\Log_{G} (\bv) =
\begin{array}{rcll}
\ldelim( {15}{4pt}[] & * & \rdelim) {15}{4pt}[] & \rdelim\}{4}{10pt}[$d_{1}$] \\
& \vdots & & \\
& * & & \\
& (-1)^{r-1}\Li_{(s_{r}, \dots, s_{1})}^{\star}(u_{r}, \dots, u_{1}) & & \\
& * & & \rdelim\}{4}{10pt}[$d_{2}$] \\
& \vdots & & \\
& * & & \\
& (-1)^{r-2}\Li_{(s_{r}, \dots, s_{2})}^{\star}(u_{r}, \dots, u_{2}) & & \\
& \vdots & & \vdots \\
& * & & \rdelim\}{4}{10pt}[$d_{r}$] \\
& \vdots & & \\
& * & & \\
& \Li^{\star}_{s_{r}}(u_{r}) & & \\[10pt]
\end{array} \ \ \ \in \Lie G(\CC_{\infty}).
\] In particular, the $(s_{1}+\cdots+s_{r})$-th coordinate of $\Log_{G}(\bv)$ is $(-1)^{\dep(\fs)-1} \Lis_{\widetilde{\fs}}(\widetilde{\bu})$.
\end{theorem}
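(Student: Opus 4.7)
The plan is to apply Anderson's Theorem~\ref{T: Anderson} to a carefully chosen pair $(\bg,\bw)$ and then read off the coordinates via Proposition~\ref{P: delta0}. The two key tasks are (i) to find a vector $\bw$ such that $\delta_1\circ\iota(\bw)=\bv$, and (ii) to produce an explicit series solution $\bg\in\Mat_{1\times r}(\TT_\theta)$ to the Anderson functional equation $\bg\twistinv\Phi'-\bg=\bw$.

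For (i), I would observe that in the $\oK[\sigma]$-basis \eqref{sigmabasis} the element $m_i$ sits at position $d_1+\cdots+d_i$ (the last slot of the $i$-th block). A constant row vector $\bw=(w_1,\ldots,w_r)\in\oK^r$ is therefore sent by $\iota$ to the element of $\Mat_{1\times d}(\oK[\sigma])$ with $w_i$ in position $d_1+\cdots+d_i$ and zeros elsewhere, so $\delta_1\circ\iota(\bw)$ is the corresponding column vector. Comparing with the explicit description of $\bv$ in \eqref{E:v_s,u} forces the choice
\[
w_i:=(-1)^{r-i}u_iu_{i+1}\cdots u_r\qquad(1\leq i\leq r).
\]

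For (ii), I would propose the ansatz
\[
g_i(t):=(-1)^{r-i}\bigl[\fLis_{(s_r,\ldots,s_i),(u_r,\ldots,u_i)}(t)-u_iu_{i+1}\cdots u_r\bigr],
\]
which lies in $\TT_\theta$ because the hypotheses $|u_r|_\infty<q^{s_rq/(q-1)}$ and $|u_j|_\infty\leq q^{s_jq/(q-1)}$ for $j<r$ give exactly condition~\eqref{E: hypothesis on u} applied to the reversed tuple $(s_r,\ldots,s_i)$ and point $(u_r,\ldots,u_i)$. To verify $\bg\twistinv\Phi'-\bg=\bw$, I would write $\cL_i:=\fLis_{(s_r,\ldots,s_i),(u_r,\ldots,u_i)}$ and split the sum defining $\cL_i\twistinv$ according to how many trailing summation indices equal $0$; using $\LL_j\twistinv=(t-\theta)\LL_{j-1}$ for $j\geq1$, this yields the recursive identity
\[
\cL_i\twistinv(t-\theta)^{d_i}=\cL_i+\sum_{k=1}^{r-i+1}(u_i\cdots u_{i+k-1})^{1/q}(t-\theta)^{s_i+\cdots+s_{i+k-1}}\,\cL_{i+k},
\]
with the convention $\cL_{r+1}:=1$, together with the analogous identity for $\cL_{i+1}\twistinv$. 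Plugging both into $g_i\twistinv(t-\theta)^{d_i}+g_{i+1}\twistinv u_i\twistinv(t-\theta)^{d_i}-g_i$, all the $1/q$-power contributions cancel telescopically between the two twist expansions, leaving precisely $(-1)^{r-i}u_i\cdots u_r=w_i$; the base case $i=r$ is the Papanikolas-type identity $\fLis_{s_r,u_r}\twistinv(t-\theta)^{s_r}-\fLis_{s_r,u_r}=u_r^{1/q}(t-\theta)^{s_r}$.

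Given such $(\bg,\bw)$, Anderson's Theorem~\ref{T: Anderson} yields $\Exp_G(\delta_0\circ\iota(\bg+\bw))=\bv$. To conclude I would verify via direct $\infty$-adic size estimates on the entries of $\delta_0\circ\iota(\bg+\bw)$ (using the convergence bounds for the $g_i$) that this vector lies in the disc of convergence of $\Log_G$, whence $\Log_G(\bv)=\delta_0\circ\iota(\bg+\bw)$. The explicit formula of Proposition~\ref{P: delta0} then identifies the $(d_1+\cdots+d_i)$-th coordinate as the constant term of $g_i+w_i$ at $t=\theta$, which evaluates to
\[
g_i(\theta)+w_i=(-1)^{r-i}\bigl[\Lis_{(s_r,\ldots,s_i)}(u_r,\ldots,u_i)-u_i\cdots u_r\bigr]+(-1)^{r-i}u_i\cdots u_r=(-1)^{r-i}\Lis_{(s_r,\ldots,s_i)}(u_r,\ldots,u_i),
\]
matching the claim. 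The main technical obstacle will be the bookkeeping for the cancellation of the spurious $1/q$-power terms (which live in $\TT_\theta\setminus\oK[t]$) during the verification of the functional equation, and a secondary issue is confirming $\infty$-adic convergence of $\Log_G$ at $\bv$ despite the weak bounds $|u_j|_\infty\leq q^{s_jq/(q-1)}$ for $j<r$.
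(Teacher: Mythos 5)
The paper does not actually prove Theorem~\ref{T: Thm4.2.3 of CM17}; it cites it from~\cite{CM17b}. The paper's own closest proof is that of Theorem~\ref{T:Logcoords}, which applies Theorem~\ref{T: Anderson} via the series $\bg$ of Proposition~\ref{P:Convergence}, and then closes the gap from $\Exp_G(\bY)=\bv$ to $\Log_G(\bv)=\bY$ by invoking Lemma~\ref{Yu's lemma} \emph{together with the cited Theorem~\ref{T: Thm4.2.3 of CM17}}. Your proposal is thus parallel to the paper's framework but attempts to prove the cited theorem from scratch, so it cannot use Lemma~\ref{Yu's lemma} without circularity and must instead carry the convergence argument itself.

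Most of what you do checks out and is, in substance, what the paper does in disguise. Your choice $w_i=(-1)^{r-i}u_i\cdots u_r$ does give $\delta_1\circ\iota(\bw)=\bv$, since $m_i$ is the $(d_1+\cdots+d_i)$-th element of the $\KK[\sigma]$-basis~\eqref{sigmabasis}. Your closed form $g_i=(-1)^{r-i}\bigl(\fLis_{(s_r,\ldots,s_i),(u_r,\ldots,u_i)}-u_i\cdots u_r\bigr)$ is exactly the value that the paper's series $\bg$ in~\eqref{E: Def of g} sums to (compare the computation of $\sum_n\beta_{n,i}$ in the proof of Theorem~\ref{T:Logcoords}), and the recursive twist identity you write for $\cL_i^{(-1)}(t-\theta)^{d_i}$ is correct: the $1/q$-power terms from $g_i^{(-1)}(t-\theta)^{d_i}$ and from $g_{i+1}^{(-1)}u_i^{(-1)}(t-\theta)^{d_i}$ do cancel telescopically, leaving $(-1)^{r-i}\cL_i$, so that subtracting $g_i$ yields $w_i$. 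The hypotheses~\eqref{E: hypothesis on u} applied to the reversed tuple place each $\cL_i$ in $\TT_\theta$, so Theorem~\ref{T: Anderson} applies and gives $\Exp_G(\delta_0\circ\iota(\bg+\bw))=\bv$. Reading off the last slot of the $i$-th block via Proposition~\ref{P: delta0} gives $(g_i+w_i)|_{t=\theta}=(-1)^{r-i}\Lis_{(s_r,\ldots,s_i)}(u_r,\ldots,u_i)$, as claimed.

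The genuine gap is the last step, and it is more than a ``secondary issue.'' Knowing that $Y:=\delta_0\circ\iota(\bg+\bw)$ satisfies $\Exp_G(Y)=\bv$ and is $\infty$-adically small does \emph{not} by itself give $\Log_G(\bv)=Y$. You need two further things: (a) that the power series $\Log_G$ actually converges at $\bv$ (which is part of what the theorem asserts and is nontrivial at the boundary $|u_j|_\infty=q^{s_jq/(q-1)}$ for $j<r$, where the Gauss-norm estimates are tight), and (b) that $\Exp_G$ is injective on a polydisc containing both $Y$ and $\Log_G(\bv)$, so that the two preimages of $\bv$ agree. Point (b) requires controlling where the period lattice of $G$ sits, or equivalently where the inverse of $\Exp_G$ is single-valued; a size bound on $Y$ alone does not supply this. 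In the paper's proof of Theorem~\ref{T:Logcoords} this identification is done with Lemma~\ref{Yu's lemma}, which however presupposes the existence and last-coordinate formula of $\Log_G(\bv)$ from the very theorem you are trying to prove. So your argument as written would need to replace the sketched ``direct $\infty$-adic size estimates'' with an honest convergence-plus-injectivity argument for $\Exp_G$; that is precisely the technical content carried by the citation to~\cite{CM17b}, and it is not present in your proposal.
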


The primary result in this section is to give explicit formulae for the (previously unknown) $*$-coordinates in the theorem above. 

\begin{theorem}\label{T:Logcoords}
Let the notation and assumptions be given as in Theorem~\ref{T: Thm4.2.3 of CM17}.  For each $1\leq i\leq r$, we let $d_{i}:=s_{i}+\cdots+s_{r}$ and set $d:=d_{1}+\cdots+d_{r}$. Define
\[
\bY_{\fs,\bu}:=\begin{pmatrix}
Y_{1}\\
\vdots\\
Y_{r}
\end{pmatrix}\in \Mat_{d\times 1}(\CC_{\infty}),
\]
where for each $1\leq i\leq r$, $Y_{i}$ is given by
\[
\begin{array}{rl}
Y_{i} & = \left(
\begin{matrix}
(-1)^{r-i} \left(\partial_{t}^{d_{i}-1} \fLis_{(s_{r},\ldots,s_{i}), (u_{r},\ldots,u_{i})}(t)\right)|_{t=\theta}\\
(-1)^{r-i} \left(\partial_{t}^{d_{i}-2} \fLis_{(s_{r},\ldots,s_{i}), (u_{r},\ldots,u_{i})}(t)\right)|_{t=\theta}\\
\vdots\\
(-1)^{r-i} \left(\partial_{t}^{0} \fLis_{(s_{r},\ldots,s_{i}), (u_{r},\ldots,u_{i})}(t)\right)|_{t=\theta}
\end{matrix}
\right)\\
&\\
&=\left(
\begin{matrix}
(-1)^{r-i} \left(\partial_{t}^{d_{i}-1} \fLis_{(s_{r},\ldots,s_{i}), (u_{r},\ldots,u_{i})}(t)\right)|_{t=\theta}\\
(-1)^{r-i} \left(\partial_{t}^{d_{i}-2} \fLis_{(s_{r},\ldots,s_{i}), (u_{r},\ldots,u_{i})}(t)\right)|_{t=\theta}\\
\vdots\\
(-1)^{r-i} \Lis_{(s_{r},\ldots,s_{i})} (u_{r},\ldots,u_{i})
\end{matrix}
\right)\in \Mat_{d_i \times 1}(\CC_{\infty}).
\end{array}
\]Then we get the following formula for the logarithm evaluated at $\bv$
\[
\Log_G(\bv) = \bY_{\fs,\bu}\in \Lie G(\CC_{\infty}).
\]
\end{theorem}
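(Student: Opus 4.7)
The plan is to apply Anderson's Theorem~\ref{T: Anderson} to a pair $(\bg,\bw)$ whose components are built directly from the $t$-motivic CMSPL series themselves. For each $1 \leq i \leq r$, I would set
\[
g_i(t) := (-1)^{r-i}\,\fLis_{(s_r,\ldots,s_i),(u_r,\ldots,u_i)}(t),
\]
and let $\bg := (g_1,\ldots,g_r)$. The absolute value hypotheses of Theorem~\ref{T: Thm4.2.3 of CM17} are exactly what is needed to guarantee $g_i \in \TT_{\theta}$ for every $i$. I would then define $\bw := \bg^{(-1)} \Phi' - \bg$, where $\Phi'$ is the matrix from~\eqref{E:Phi s'} with $Q_i = u_i$, so that the Anderson functional equation~\eqref{AndersonCondition} holds automatically. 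The first task is to verify $\bw \in \Mat_{1\times r}(\oK[t])$ by computing it explicitly.

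When I expand $g_j^{(-1)}(t-\theta)^{d_j}$, the terms of the defining sum in which every summation index $k_j,\ldots,k_r$ is $\geq 1$ recollect, after the shift $k_l \mapsto k_l - 1$, precisely to $g_j$. The remaining boundary terms (those with $k_j = 0$) factor as $-u_j^{q^{-1}} g_{j+1}^{(-1)}(t-\theta)^{d_j}$. Since the $(j+1,j)$-entry of $\Phi'$ contributes exactly $u_j^{(-1)}g_{j+1}^{(-1)}(t-\theta)^{d_j}$ to $\bg^{(-1)}\Phi'$, these two contributions cancel and yield $w_j = 0$ for $1\leq j < r$. A parallel, simpler computation at $j = r$ produces $w_r = u_r^{q^{-1}}(t-\theta)^{s_r}$. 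Hence $\bw = (0,\ldots,0,\,u_r^{q^{-1}}(t-\theta)^{s_r})$, which is a polynomial vector of degree exactly $d_r$ in its last coordinate.

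Next, I would verify the two identities
\[
\delta_0 \circ \iota(\bg+\bw) = \bY_{\fs,\bu} \quad \text{and} \quad \delta_1 \circ \iota(\bw) = \bv_{\fs,\bu}.
\]
The first is immediate from Propositions~\ref{P: delta0} and~\ref{P: Taylor coeff}: $\delta_0\circ\iota$ extracts the coefficients of $J^{d_i-1}_\theta$ of each coordinate, and these coefficients equal the hyperderivatives of $g_i+w_i$ at $t=\theta$; because $w_r$ has degree exactly $d_r$ (contributing nothing through $J^{d_r-1}_\theta$) and $w_j = 0$ otherwise, all jets are those of $g_i$ alone, reproducing $\bY_{\fs,\bu}$. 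The second identity is the main technical step: one must rewrite $u_r^{q^{-1}}(t-\theta)^{d_r}m_r$ in the $\oK[\sigma]$-basis~\eqref{sigmabasis}. Iterating the relation $(t-\theta)^{d_j}m_j = \sigma m_j - u_{j-1}^{(-1)}(t-\theta)^{d_{j-1}}m_{j-1}$ (read off from $\Phi'$) from $j=r$ down to $j=1$ telescopes to
\[
u_r^{q^{-1}}(t-\theta)^{d_r}m_r = \sum_{j=1}^{r}(-1)^{r-j}\,(u_r u_{r-1}\cdots u_j)^{(-1)}\,\sigma m_j,
\]
and the one-step Frobenius twist built into $\delta_1$ converts $(u_r\cdots u_j)^{(-1)}$ into $u_j u_{j+1}\cdots u_r$ at the coordinate position corresponding to $m_j$, matching the nonzero entries of $\bv_{\fs,\bu}$ prescribed by~\eqref{E:v_s,u}.

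With both identities in hand, Theorem~\ref{T: Anderson} yields $\Exp_G(\bY_{\fs,\bu}) = \bv_{\fs,\bu}$; combined with the convergence of $\Log_G$ at $\bv_{\fs,\bu}$ afforded by Theorem~\ref{T: Thm4.2.3 of CM17}, together with the fact that the absolute value hypotheses place $\bY_{\fs,\bu}$ in the injective neighborhood of $\Exp_G$, this inverts to give $\Log_G(\bv_{\fs,\bu}) = \bY_{\fs,\bu}$. The main obstacle is the basis-change calculation underlying $\delta_1\circ\iota(\bw) = \bv_{\fs,\bu}$: the telescoping recursion through the $\oK[\sigma]$-basis interleaves Frobenius twists $u_i^{(-1)}$ with the alternating signs from the cascade, and matching the resulting combinatorial expression to~\eqref{E:v_s,u} requires careful bookkeeping. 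This is presumably why the paper isolates the computation inside the technical Lemma~\ref{L:technical lemma}, of which our setting would be the specialization recorded in Proposition~\ref{P: special case of general formulae}.
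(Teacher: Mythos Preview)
Your route is genuinely different from the paper's, and in several respects more direct. The paper fixes $\bw$ first (with arbitrary small coefficients $c_{i,j}$), builds $\bg$ as the infinite iteration $\sum_{n\ge 1}\bw^{(n)}\prod_{m}(\Phi'^{-1})^{(\cdot)}$, proves its convergence (Proposition~\ref{P:Convergence}), and then must carry out a substantial combinatorial computation---the index-set bijection in the proof of Theorem~\ref{T:Logcoords}---to identify $\delta_0\circ\iota(\bg+\bw)$ with the hyperderivatives of the $\fLis$ series. You reverse this: you \emph{define} $\bg$ to be the $\fLis$ series, so that $\delta_0\circ\iota(\bg+\bw)=\bY_{\fs,\bu}$ is immediate from Propositions~\ref{P: delta0} and~\ref{P: Taylor coeff}, and the work shifts to checking that $\bw:=\bg^{(-1)}\Phi'-\bg$ is the explicit polynomial vector $(0,\ldots,0,u_r^{1/q}(t-\theta)^{d_r})$ and that $\delta_1\circ\iota(\bw)=\bv_{\fs,\bu}$. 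Both of your computations are correct (the cancellation for $j<r$ and the telescoping expression of $(t-\theta)^{d_r}m_r$ in the $\sigma$-basis go through exactly as you sketch). What you gain is that you bypass the Technical Lemma~\ref{L:technical lemma} and the bijection of index sets entirely; what the paper gains is a formula valid for \emph{all} small $\bw$, not just the one arising from $\bv_{\fs,\bu}$.

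The one soft spot is your final inversion step. You have $\Exp_G(\bY_{\fs,\bu})=\bv_{\fs,\bu}$ and $\Exp_G(\Log_G(\bv_{\fs,\bu}))=\bv_{\fs,\bu}$, and you assert that ``the absolute value hypotheses place $\bY_{\fs,\bu}$ in the injective neighborhood of $\Exp_G$.'' That claim is plausible but not proved: it would require explicit estimates on the hyperderivatives $(\partial_t^k\fLis)|_{t=\theta}$ and on the radius of injectivity of $\Exp_G$, neither of which you supply. The paper closes this gap differently and more cheaply, via Lemma~\ref{Yu's lemma} (Yu's last-coordinate argument): since by Theorem~\ref{T: Thm4.2.3 of CM17} the $(d_1+\cdots+d_i)$-th coordinates of $\Log_G(\bv_{\fs,\bu})$ already equal $(-1)^{r-i}\Lis_{(s_r,\ldots,s_i)}(u_r,\ldots,u_i)$, and these coincide with the last entries of your $Y_i$, Lemma~\ref{Yu's lemma} forces $\bY_{\fs,\bu}=\Log_G(\bv_{\fs,\bu})$. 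You already have all the ingredients for this, so you should invoke Lemma~\ref{Yu's lemma} in place of the unproved injectivity claim.
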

\subsection{Preparation for the proof} This section is devoted to the preparation of the proof of Theorem~~\ref{T:Logcoords}. For convenience, we extend the hyperderivatives~\eqref{E: DefHyperderivative} to operators on vectors with entries in $\laurent{\CC_{\infty}}{t}$. Precisely, for a positive integer $m$ and for $g_1, \dots,g_n \in \laurent{\CC_{\infty}}{t}$ we define
\begin{equation}\label{E: partial matrix}
\partial_t^m[g_1,\dots,g_n] = 
\left (\begin{matrix}
\partial_t^{m-1}(g_1) & \hdots & \partial_t^{m-1}(g_n)\\
\vdots&   & \vdots \\
 \partial_t^{1}(g_1) & \hdots &  \partial_t^{1}(g_n)\\
g_1  & \hdots & g_n
\end{matrix}\right ) \in \Mat_{m\times n}( \laurent{\CC_{\infty}}{t}),
\end{equation}
and for $h\in \laurent{\CC_{\infty}}{t}$ define
\begin{equation}\label{E: d-matrix}
d_t^m[h] = 
\left (\begin{matrix}
h &  \partial_t^{1}(h) & \partial_t^2(h) & \hdots & \partial_t^{m-1}(h)\\
0 & h &  \partial_t^{1}(h) & \hdots & \partial_t^{m-2}(h) \\
0 & 0 & h & \hdots &  \partial_t^{m-3}(h) \\
\vdots & \vdots & \vdots & \ddots & \vdots \\
0 & 0 & 0 & \hdots & h
\end{matrix}\right ) \in \Mat_{ m}( \laurent{\CC_{\infty}}{t}).
\end{equation}
These matrices are all defined in \cite[\S 2.5]{Pp} and are called $\partial$-matrices and $d$-matrices, respectively.  We collect several facts about these matrices which are proved there.

\begin{proposition}\label{P:dmatfacts}
Let $m$ be a positive integer. 
For any $h,g_1, \dots,g_n \in \laurent{\CC_{\infty}}{t}
$, the following hold.
\begin{enumerate}
\item The $d$-matrices are multiplicative,
\[d_t^m[g_1]d_t^m[g_2] = d_t^m[g_1g_2].\]
\item We can combine $d$-matrices and $\partial$-matrices as follows,
\[d_t^m[h]\partial_t^m[g_1,\dots,g_n] = \partial_t^m[hg_1,\dots,hg_n].\]
\item Viewed as maps, $ d_t^m[\cdot]:\laurent{\CC_{\infty}}{t} \to  \Mat_{m}(\laurent{\CC_{\infty}}{t})$ and $ \partial_t^m[\cdot]:\Mat_{1\times n}(\laurent{\CC_{\infty}}{t})\to \Mat_{m\times n}(\laurent{\CC_{\infty}}{t})$ are $\CC_{\infty}$-linear injections of vector spaces.
\end{enumerate}
\end{proposition}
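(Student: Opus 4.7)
The plan is to reduce all three claims to two elementary facts: the $\CC_\infty$-linearity of each $\partial_t^n$, and the Leibniz-type product rule~\eqref{E:ProdRule}. The $d$-matrix $d_t^m[h]$ is upper triangular with $(i,j)$-entry $\partial_t^{j-i}(h)$ for $i \leq j$ (and zero for $i > j$), while the $\partial$-matrix $\partial_t^m[g_1,\ldots,g_n]$ has $(i,j)$-entry $\partial_t^{m-i}(g_j)$. With these explicit descriptions, each claim becomes a direct entrywise computation.

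For part (1), I would compute the $(i,j)$-entry of $d_t^m[g_1]\,d_t^m[g_2]$. Since both factors are upper triangular, only the indices $k$ with $i \leq k \leq j$ contribute, and after reindexing $\ell = k-i$ the sum is recognizable as a Leibniz expansion:
\[
\sum_{k=i}^{j} \partial_t^{k-i}(g_1)\,\partial_t^{j-k}(g_2) \;=\; \sum_{\ell=0}^{j-i} \partial_t^{\ell}(g_1)\,\partial_t^{(j-i)-\ell}(g_2) \;=\; \partial_t^{j-i}(g_1 g_2),
\]
by~\eqref{E:ProdRule}, which is exactly the $(i,j)$-entry of $d_t^m[g_1 g_2]$. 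For $i > j$ both sides vanish, so the matrices agree. For part (2), an entirely analogous computation, but with $k$ now running from $i$ to $m$ (since $\partial_t^m[g_1,\ldots,g_n]$ has full columns), yields
\[
\sum_{k=i}^{m}\partial_t^{k-i}(h)\,\partial_t^{m-k}(g_j) \;=\; \partial_t^{m-i}(h g_j),
\]
which is the $(i,j)$-entry of $\partial_t^m[hg_1,\ldots,hg_n]$.

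For part (3), the $\CC_\infty$-linearity of both maps is immediate from the fact that each entry of either matrix is a $\CC_\infty$-linear functional (an iterated hyperderivative) of the input. Injectivity is obtained by reading off a distinguished entry: the diagonal of $d_t^m[h]$ recovers $h$, and the bottom row of $\partial_t^m[g_1,\ldots,g_n]$ recovers the tuple $(g_1,\ldots,g_n)$.

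I do not expect any genuine obstacle; the only point requiring care is the index bookkeeping when translating the upper-triangular support of the $d$-matrix into the correct summation range, after which the Leibniz rule~\eqref{E:ProdRule} does all the work. Parts (1) and (2) can in fact be handled by a single computation done once and specialized, and part (3) is formal once the explicit entries are written down.
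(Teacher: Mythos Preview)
Your proof is correct. The paper itself does not supply a proof of this proposition; it merely records these facts and cites Papanikolas' manuscript~\cite{Pp} (see the sentence immediately preceding the proposition: ``We collect several facts about these matrices which are proved there''). Your direct entrywise verification via the product rule~\eqref{E:ProdRule} is exactly the expected argument, and the index bookkeeping is handled correctly in both parts (1) and (2); part (3) is, as you say, immediate from reading off the diagonal (resp.\ bottom row).
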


Applying the properties above, we have the following.

\begin{proposition}\label{P: formula of partial rho}
Let $(G,\rho)$ be the $t$-module defined in \eqref{E:Explicit t-moduleCMPL}. Then for every polynomial $b(t)\in \FF_{q}[t]$, $ \partial \rho (b)$ is a block diagonal  matrix with 
\[ d_{t}^{d_{i}}[b(t)]|_{t=\theta}=
\begin{pmatrix}
b(\theta)& \left(\partial_{t}^{1}b \right)|_{t=\theta}&\cdots&\left(\partial_{t}^{d_{i}-1}b \right)|_{t=\theta} \\
&\ddots& \ddots& \vdots\\
&&\ddots&  \left(\partial_{t}^{1}b \right)|_{t=\theta} \\
&&& b(\theta)
\end{pmatrix}\in \Mat_{d_i}(\oK)
\]located at the $i$-th block along the diagonal for $i=1,\ldots,r$.
\end{proposition}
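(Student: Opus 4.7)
The plan is to reduce the computation of $\partial\rho(b)$ to a purely matrix-theoretic identity and then verify that identity using a hyperderivative Taylor expansion, which is the correct characteristic-$p$ substitute for the usual Taylor formula.

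First, I would observe that the map $\partial\colon \Mat_d(\oK)[\tau] \to \Mat_d(\oK)$ sending a twisted polynomial to its constant term is multiplicative (since in any product $(\sum \alpha_i\tau^i)(\sum \beta_j\tau^j)$, the $\tau^0$-coefficient is $\alpha_0\beta_0$). Hence $\partial\rho\colon \F_q[t] \to \Mat_d(\oK)$ is an $\F_q$-algebra homomorphism, and from the explicit formula $\rho(t) = \theta I_d + N + E\tau$ of \eqref{E:Explicit t-moduleCMPL} we get $\partial\rho(t) = \theta I_d + N$. Therefore for any $b(t) \in \F_q[t]$,
\[
\partial\rho(b) = b(\theta I_d + N).
\]
Because $N$ is block diagonal with blocks $N_1,\dots,N_r$, so is $\theta I_d + N$, and the $i$-th diagonal block of $\partial\rho(b)$ equals $b(\theta I_{d_i} + N_i)$.

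The core step is then to verify the matrix identity
\[
b(\theta I_{d_i} + N_i) \;=\; d_t^{d_i}[b(t)]\bigl|_{t=\theta}
\]
for each $i$. For this I would prove the general Taylor-type expansion: if $X$ is any element commuting with $t$ in a commutative ring, then
\[
b(t+X) \;=\; \sum_{k\geq 0} \bigl(\partial_t^k b\bigr)(t)\, X^k.
\]
By $\F_q$-linearity in $b$, it suffices to check this on monomials $b(t) = t^n$, where the binomial expansion $(t+X)^n = \sum_{k=0}^n \binom{n}{k} t^{n-k} X^k$ matches the definition $\partial_t^k(t^n) = \binom{n}{k} t^{n-k}$ in \eqref{E: DefHyperderivative}. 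Applying this with $X = N_i$ (which commutes with $\theta I_{d_i}$) and $t \mapsto \theta$, and using $N_i^{d_i} = 0$ to truncate the sum, I obtain
\[
b(\theta I_{d_i} + N_i) \;=\; \sum_{k=0}^{d_i - 1} \bigl(\partial_t^k b\bigr)(\theta)\, N_i^k.
\]
Since $N_i^k$ is the matrix with $1$'s on the $k$-th superdiagonal and $0$'s elsewhere, the right-hand side is exactly the upper-triangular matrix whose $k$-th superdiagonal entries all equal $(\partial_t^k b)(\theta)$, i.e., the matrix $d_t^{d_i}[b(t)]|_{t=\theta}$ described in \eqref{E: d-matrix}.

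I do not foresee a genuine obstacle here: the only subtle point is invoking hyperderivatives rather than ordinary derivatives in the Taylor-type expansion, since in characteristic $p$ the factors $k!$ are not invertible. The identity $b(t+X) = \sum_k (\partial_t^k b)(t)\, X^k$ is, however, essentially the definition of hyperderivatives evaluated on a binomial expansion, so the verification is routine. Combining this with the block-diagonal structure inherited from $N$ completes the proof.
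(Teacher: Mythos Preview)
Your proof is correct and follows essentially the same route as the paper. The only difference in presentation is that the paper, after reducing to the block $\partial\bC^{\otimes d_i}(t)=\theta I_{d_i}+N_i=d_t^{d_i}[t]|_{t=\theta}$, invokes the multiplicativity of $d$-matrices from Proposition~\ref{P:dmatfacts}(1) to conclude that the two $\FF_q$-algebra homomorphisms $b\mapsto b(\theta I_{d_i}+N_i)$ and $b\mapsto d_t^{d_i}[b]|_{t=\theta}$ agree on all of $\FF_q[t]$, whereas you re-derive this directly via the hyperderivative Taylor expansion $b(t+X)=\sum_{k\geq 0}(\partial_t^k b)(t)\,X^k$ applied to $X=N_i$.
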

\begin{proof}
Note that by definition of $\rho$ we have
\[ 
\partial \rho(t)=
\begin{pmatrix}
\theta I_{d_{1}}+N_{1}& &\\
& \ddots& \\
& & \theta I_{d_{r}}+N_{r}
\end{pmatrix}=
\begin{pmatrix}
\partial \bC^{\otimes d_1}(t)& &\\
& \ddots& \\
& & \partial \bC^{\otimes d_r}(t)
\end{pmatrix}
\]
and also
\[ \partial \bC^{\otimes d_i}(t)= d_{t}^{d_{i}}[ t]|_{t=\theta}\]
for each $1\leq i \leq r$. Since $\partial \rho(\cdot)$ is an $\FF_{q}$-linear ring homomorphism on $\FF_{q}[t]$ and by Proposition~\ref{P:dmatfacts} so is $d_{t}^{d_{i}}[\cdot ]$ for all $1\leq i \leq r$, the desired result follows.
\end{proof}

The following lemma is a  generalization of Yu's last coordinate logarithms theory~\cite[Thm.~2.3]{Yu91} to our $t$-module $G$, and it will be used in the proof of the formulae given in the next subsection.

\begin{lemma}\label{Yu's lemma}
Fixing $\fs=(s_{1},\ldots,s_{r})\in \NN^{r}$ and $\bu=(u_{1},\ldots,u_{r})\in  \oK^{r}$, let $G$ be the $t$-module over  $\oK$ defined in {\rm{(\ref{E:Explicit t-moduleCMPL})}}.  For each $1\leq i\leq r$, we let $d_{i}:=s_{i}+\cdots+s_{r}$.  Suppose that we have two vectors 
\[Y= \begin{pmatrix}
Y_{1}\\
\vdots\\
Y_{r}
\end{pmatrix}\in  \Lie G(\CC_{\infty}) \ \textnormal{ and } V= \begin{pmatrix}
V_{1}\\
\vdots\\
V_{r}
\end{pmatrix}\in \Lie G(\CC_{\infty})
\]
with $Y_{i}, V_{i}\in \Mat_{d_{i}\times 1}(\CC_{\infty})$ for each $i$ so that 
\begin{itemize}
\item $ \Exp_{G}(Y) \in G( \oK)$ and $ \Exp_{G}(V) \in G( \oK)$;
\item the last coordinate of $Y_i$ equals the last coordinate of $V_i$ for all $1\leq i\leq r$.
\end{itemize}
Then we have that $Y=V$. 
\end{lemma}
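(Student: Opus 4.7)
I would set $W:=Y-V\in\Lie G(\CC_{\infty})$ and use the $\FF_{q}$-linearity of $\Exp_{G}$ to reduce to the following cleaner statement: \emph{if $W\in\Lie G(\CC_{\infty})$ satisfies $\Exp_{G}(W)\in G(\oK)$ and the last coordinate of each block $W_{i}$ vanishes, then $W=0$}. I would then proceed by induction on $r=\dep(\fs)$.

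\textbf{Base case $r=1$.} Here $G$ collapses to $\bC^{\otimes d_{1}}$ and the statement is precisely Yu's classical last-coordinate theorem~\cite[Thm.~2.3]{Yu91}: a logarithm of an algebraic point of $\bC^{\otimes n}$ with vanishing last coordinate must itself vanish.

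\textbf{Inductive step.} The key structural observation is that in the explicit form $\rho(t)=\theta I_{d}+N+E\tau$ of \eqref{E:Explicit t-moduleCMPL}, the matrix $N$ is block diagonal while $E[\ell m]=0$ for $\ell>m$. Consequently the $i$-th block of $\rho(t)\bz$ depends only on $z_{i},z_{i+1},\ldots,z_{r}$, and we obtain a short exact sequence of $t$-modules
\[0\longrightarrow\bC^{\otimes d_{1}}\xrightarrow{\ \iota\ }G\xrightarrow{\ \pi\ }G_{\fs',\bu'}\longrightarrow 0,\]
where $\iota(z_{1})=(z_{1},0,\ldots,0)$, $\pi(z_{1},\ldots,z_{r})=(z_{2},\ldots,z_{r})$, $\fs'=(s_{2},\ldots,s_{r})$ and $\bu'=(u_{2},\ldots,u_{r})$; the identification of the quotient with $G_{\fs',\bu'}$ comes from a direct comparison of the sub-blocks $E[\ell m]$ for $\ell,m\geq 2$ with the corresponding blocks in the definition of $G_{\fs',\bu'}$. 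Applying $\pi$ and using functoriality of $\Exp$, the vector $W':=\partial\pi(W)=(W_{2},\ldots,W_{r})$ satisfies $\Exp_{G_{\fs',\bu'}}(W')=\pi(\Exp_{G}(W))\in G_{\fs',\bu'}(\oK)$, and the vanishing last-coordinate condition is inherited block-by-block. The inductive hypothesis then yields $W'=0$, so $W=\iota(W_{1})$ for some $W_{1}\in\Lie\bC^{\otimes d_{1}}(\CC_{\infty})$. Functoriality of $\iota$ gives $\Exp_{\bC^{\otimes d_{1}}}(W_{1})\in\bC^{\otimes d_{1}}(\oK)$, and the last coordinate of $W_{1}$ vanishes by hypothesis, so the base case forces $W_{1}=0$ and hence $W=0$.

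\textbf{Expected obstacle.} The only non-routine step is confirming that $\pi$ is literally a morphism of $t$-modules landing in $G_{\fs',\bu'}$ with the exact constants from \eqref{E:Explicit t-moduleCMPL}, which reduces to matching the signs and products $(-1)^{m-\ell}\prod_{e=\ell}^{m-1}u_{e}$ after the index shift $\ell\mapsto\ell-1$, $m\mapsto m-1$. This is bookkeeping rather than a substantive difficulty, and no analytic input beyond Yu's theorem for $\bC^{\otimes n}$ is required.
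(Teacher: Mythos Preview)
Your argument is correct and follows the same overall scheme as the paper: reduce to $W:=Y-V$, induct on the depth $r$, and invoke Yu's last-coordinate theorem \cite[Thm.~2.3]{Yu91} for $\bC^{\otimes n}$ as the only analytic input. The difference lies in which short exact sequence you peel off.

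The paper uses the sequence
\[
0\longrightarrow G_{r-1}\longrightarrow G\xrightarrow{\ \pi_{r}\ }\bC^{\otimes d_{r}}\longrightarrow 0,
\]
obtained from the upper-left $(r-1)\times(r-1)$ block of $\Phi'$; here $\pi_{r}$ projects onto the \emph{last} $d_{r}$ coordinates, Yu's theorem kills $W_{r}$ first, and one then recurses on $G_{r-1}$, which is the $t$-module attached to the modified index $(s_{1},\ldots,s_{r-2},s_{r-1}+s_{r})$. You instead use the sequence
\[
0\longrightarrow \bC^{\otimes d_{1}}\longrightarrow G\longrightarrow G_{\fs',\bu'}\longrightarrow 0,
\]
projecting onto the last $r-1$ blocks; the induction hypothesis kills $W_{2},\ldots,W_{r}$ simultaneously, and Yu's theorem is applied at the very end to $W_{1}$ in $\bC^{\otimes d_{1}}$. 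Your route has the mild advantage that the quotient is literally $G_{\fs',\bu'}$ for the naturally truncated data $\fs'=(s_{2},\ldots,s_{r})$, $\bu'=(u_{2},\ldots,u_{r})$, so the inductive hypothesis applies without re-indexing; the paper's route instead requires observing that $G_{r-1}$ is again of the form \eqref{E:Explicit t-moduleCMPL} for a depth-$(r-1)$ index, which accounts for the somewhat unexpected combination $s_{r-1}+s_{r}$. Both filtrations exist because $E$ is block upper triangular, and the bookkeeping you flag (matching the constants $(-1)^{m-\ell}\prod_{e=\ell}^{m-1}u_{e}$ after the shift) goes through exactly as you say.
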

\begin{proof}
We first note that when $r=1$, $G=\bC^{\otimes d_{1}}$, otherwise from the explicit definition $G$ is an iterated extension of certain tensor powers of the Carlitz module.  We can also see this directly as follows, by using the dual $t$-motive $M'$ whose $\sigma$-action on the basis $\bmm'$ from \eqref{m'basis} is given by $\Phi'$ (\ref{E:Phi s'}).  Note first that the $t$-module  which corresponds to $M'$ is $G$.  Then, we put $G_{1}:=\bC^{\otimes d_{1}}$  and $G_{r}:=G$ and for each $1\leq i\leq r$, we let $\Phi_{i}'$ be the square matrix of size $i$ cut off from the upper left square of $\Phi'$, let $M_{i}'$ be the dual $t$-motive whose $\sigma$-action on a fixed $\CC_{\infty}[t]$-basis is given by $\Phi_{i}'$ and let $G_{i}$ be the $t$-module associated to $M_{i}'$ as per Sec.~\ref{Subsec: t-motives to t-modules}. For each $2\leq i\leq r$ we have the exact sequence of left $\CC_{\infty}[t,\sigma]$-modules
\begin{equation}\label{Mexact}
 \xymatrix{
0 \ar[r] & M_{i-1}' \ar@{^{(}->}[r] & M_{i}' \ar@{->>}[r] & C^{\otimes d_{i}} \ar[r] &0
}.
\end{equation}

Recall that $M_{i}'/(\sigma-1)M_{i}' \cong G_{i}(\CC_{\infty})$ and $C^{\otimes d_{i}}/(\sigma-1)C^{\otimes d_{i}}\cong \bC^{\otimes d_{i}}(\CC_{\infty})$ as $\FF_{q}[t]$-modules. Since the $\FF_{q}[t]$-linear map $(\sigma-1):\C^{\otimes d_i}\rightarrow \C^{\otimes d_i}$ is injective, the snake lemma combined with  the two previously mentioned facts show  the following exact sequence of $t$-modules

 \[ \xymatrix{
0 \ar[r] & G_{i-1} \ar@{^{(}->}[r]^{\ \ \iota_{i}}& G_{i} \ar@{->>}[r]^{\pi_{i} \ \ } & \bC^{\otimes d_{i}} \ar[r] &0
}\]induced from \eqref{Mexact}, where $\pi_i$ is the projection map onto the last $d_i$ coordinates which also equals $\partial \pi_i$ (cf.~proof of \cite[Prop.~6.1.1]{CPY14}).

We prove the lemma by induction on the depth $r$. When $r=1$, we consider $Y-V$ which is mapped to an algebraic point of $\bC^{\otimes d_1}$ via $\Exp_{\bC^{\otimes d_1}}$ by the  hypotheses on $Y$ and $V$.  Since the last coordinate of $Y-V$ is zero, we have $Y=V$ by \cite[Thm.~2.3]{Yu91}.

Suppose that the result is valid for all $r\leq n-1$ for a positive integer $n\geq 2$. Now we consider the case when $r=n$. In this case, we consider the following commutative diagram
\[
\xymatrix{
0 \ar[r] & G_{r-1} \ar@{^{(}->}[r]^{\iota_{r}}& G \ar@{->>}[r]^{\pi_{r}} & \bC^{\otimes d_{r}} \ar[r] &0\\
0 \ar[r] & \Lie G_{r-1} \ar@{^{(}->}[r]^{\ \  \partial \iota_{r}} \ar[u]^{\Exp_{G_{r-1}}} & \Lie G \ar@{->>}[r]^{ \partial \pi_{r}  \ \ }\ar[u]^{\Exp_{G}} & \Lie \bC^{\otimes d_{r}} \ar[r] \ar[u]_{ \Exp_{\bC^{\otimes d_{r}}}} &0.
}\] Note that
\[ 
\partial \pi_{r}\left( Y-V\right)=Y_{r}-V_{r}=\begin{pmatrix}
*\\
\vdots\\
0
\end{pmatrix},
\] which is mapped to an algebraic point of  $\bC^{\otimes d_{r}}$ via  $\Exp_{\bC^{\otimes d_{r}}}$. It follows again  by \cite[Thm.~2.3]{Yu91} that $Y_{r}=V_{r}$. Hence the  $Y-V$ are of the form
 \[
Y-V=\begin{pmatrix}
Y_{1} - V_{1}\\
\vdots\\
Y_{r-1} - V_{r-1}\\
{\bf{0}}
\end{pmatrix}\in \Ker \partial \pi_{r}=\Lie G_{r-1}.
\]
Using the commutative diagram above, by the hypotheses on $Y$ and $V$, the vector $Y-V$ is mapped to an algebraic point of $G_{r-1}$ via  $\Exp_{G_{r-1}}$. Then, since $G_{r-1}$ is defined using the index $(s_{1},\ldots,s_{r-2},s_{r-1}+s_{r})$ of depth $r-1$ and since the last coordinates of $Y_i$ and $V_i$ are the same for all $1\leq i \leq r-1$, by induction hypothesis we obtain that $Y_i=V_i$ for all $1\leq i\leq r-1$. Combining this with $Y_r=V_r$, the desired equality $Y=V$ follows. 
\end{proof}

In order to simplify when the situation needs to be discussed separately, from now on we use the following notation.
\begin{definition}
Let $R$ be a (not necessary commutative) ring and $(a_{k})_{k}$ a sequence in $R$.
For fixed integers $i$ and $j$, we define
\[
\prod_{i \leq k \leq j} a_{k} := \left\{ \begin{array}{cl} a_{i} a_{i+1} \cdots a_{j} & (i \leq j) \\ 1 & (i > j) \end{array} \right.
\ \mathrm{and} \
\prod_{i \leq k < j} a_{k} := \left\{ \begin{array}{cl} a_{i} a_{i+1} \cdots a_{j-1} & (i < j) \\ 1 & (i \geq j) \end{array} \right..
\] 
\end{definition}

We continue with above setting for fixed $\fs\in \NN^{r}$ and $\bu\in \oK^{r}$. Let $\Phi'$ be defined in \eqref{E:Phi s'} by letting $Q_{i}=u_{i}$ for $1\leq i\leq r$. We then define 
\[\Theta:=({\Phi'}^{-1})^{\tr}\in \Mat_{r}(\oK(t)),\]
which is an upper triangular matrix with $(i,j)$-entry equal to
\[  \Theta_{i, j} := (-1)^{j-i} \dfrac{\prod_{i \leq k < j} u_{k} \twistk{-1}}{(t - \theta)^{d_j}} \ (1\leq i \leq j\leq r).  \]
We further let $\bx_j :=  \left( (t - \theta)^{d_j-1}, (t - \theta)^{d_j-2}, \dots, 1 \right)$
and define
\begin{equation}\label{E: Def of X}
X := \left( \begin{matrix} \bx_{1} & & \\ & \ddots & \\ & & \bx_{r} \end{matrix} \right) \in \Mat_{r \times d}(A[t]).
\end{equation}

Define the following operator

\begin{equation}\label{E: Def of D}
D := \left( \begin{matrix} \partial_t^{d_1} & & \\ & \ddots & \\ & & \partial_t^{d_r} \end{matrix} \right):=\left( \left( \begin{matrix} \ba_1 \\ \vdots \\ \ba_r \end{matrix} \right) \mapsto \left( \begin{matrix} \partial_t^{d_1} [\ba_1] \\ \vdots \\ \partial_t^{d_r} [\ba_r] \end{matrix} \right) \right) \colon \Mat_{r \times d} (\oK(t)) \to \Mat_{ d} (\oK(t)), 
\end{equation}

where $\ba_i \in \Mat_{1 \times d}  (\oK(t))$ for each $i$.

The following identity is a special case of the Technical Lemma~\ref{L:technical lemma}, whose proof will be given in the next section, when replacing $Q_i$ by $u_i$ for $1\leq i\leq r$ and replacing \[\left(c_{1,1},\ldots,c_{1,d_{1}},\ldots,c_{r,1},\ldots,c_{r,d_{r}} \right)^{\tr}\] by $\bv$. It is key for the proof of Theorem~\ref{T:Logcoords} in the next subsection.

\begin{proposition}\label{P: special case of general formulae}
Given any $\fs=(s_{1},\ldots,s_{r})\in \NN^{r}$, we let $\bu=(u_{1},\ldots,u_{r}) \in \oK^{r}$ with $|u_i|_{\infty} \leq q^{\frac{s_i q}{q-1}}$ for each $1 \leq i \leq r-1$ and $|u_r|_{\infty} < q^{\frac{s_r q}{q-1}}$.  Let $G$ and $\bv$ be defined in \eqref{E:Explicit t-moduleCMPL} and \eqref{E:v_s,u} respectively using $\fs$ and $\bu$. 
 Then the following identity holds:
\[ 
\Exp_{G}\left( \sum_{n = 0}^{\infty} \left. D \left( \left( \prod_{1 \leq m \leq n} \Theta \twistk{m} \right)  X \twistk{n} \right) \right|_{t = \theta} \bv\twistk{n}\right)= \bv. 
\]
\end{proposition}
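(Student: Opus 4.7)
My plan is to apply Anderson's Theorem~\ref{T: Anderson}: the goal is to exhibit a pair $(\bg, \bw) \in \Mat_{1 \times r}(\TT_\theta) \times \Mat_{1 \times r}(\oK[t])$ satisfying $\bg^{(-1)} \Phi' - \bg = \bw$ such that $\delta_1 \circ \iota(\bw) = \bv$ and $\delta_0 \circ \iota(\bg + \bw)$ matches the argument of $\Exp_G$ in the proposition. I would take $\bw := (X\bv)^{\tr} = \bigl((-1)^{r-1} u_1 \cdots u_r,\ (-1)^{r-2} u_2 \cdots u_r,\ \ldots,\ u_r\bigr)$, a constant row vector. The identity $\delta_1 \circ \iota(\bw) = \bv$ then follows by direct bookkeeping: under the identification $\Mat_{1 \times r}(\oK[t]) \cong M'$, this $\bw$ corresponds to $\sum_{i=1}^{r} (-1)^{r-i}(u_i \cdots u_r)\, m_i$; since each $m_i$ is the last element of the $i$-th block of the $\sigma$-basis~\eqref{sigmabasis} and $\bw$ has entries in $\oK$, the coordinates of $\iota(\bw)$ are zero except for the entry $(-1)^{r-i} u_i \cdots u_r$ at position $d_1 + \cdots + d_i$, which exactly reproduces $\bv$ by~\eqref{E:v_s,u}.

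To construct $\bg$, I would transpose the Anderson equation. Setting $\bh := \bg^{\tr}$ and $\by := \bw^{\tr} = X\bv \in \oK^{r}$, it becomes $\bh^{(-1)} = \Theta(\bh + \by)$, equivalently $\bh = \Theta^{(1)}(\bh^{(1)} + \by^{(1)})$. Iterating this relation formally produces the candidate
\[
\bh \;=\; \sum_{n = 1}^{\infty} \Bigl(\prod_{m=1}^{n} \Theta^{(m)}\Bigr) \by^{(n)}, \qquad \text{so that} \qquad \bh + \by \;=\; \sum_{n \geq 0} \Bigl(\prod_{m=1}^{n} \Theta^{(m)}\Bigr) \by^{(n)}.
\]
The main technical obstacle is establishing convergence of this series in $\TT_\theta^{r}$: since the $(i,j)$-entry of $\Theta^{(m)}$ has denominator $(t-\theta^{q^m})^{d_j}$ of Gauss norm $q^{-q^{m} d_j}$ and the numerator contributes factors $|u_k|_\infty^{q^{m-1}}$, one must check that the assumed bounds $|u_i|_\infty \leq q^{s_i q/(q-1)}$ (with strict inequality for $i=r$) force the Gauss norms of the summands down to zero at super-exponential speed. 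This convergence analysis is precisely what the paper packages into the general Technical Lemma~\ref{L:technical lemma} in Section~\ref{Sec: general formulae}.

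Granting the convergence, Anderson's Theorem immediately yields $\Exp_G(\delta_0 \circ \iota(\bg + \bw)) = \delta_1 \circ \iota(\bw) = \bv$, so it only remains to recognise $\delta_0 \circ \iota(\bg + \bw)$ as the stated sum. By Propositions~\ref{P: delta0} and~\ref{P: Taylor coeff}, the map $\delta_0 \circ \iota$ on $\ba \in \Mat_{1 \times r}(\TT_\theta)$ equals $D(\ba^{\tr})\bigl|_{t=\theta}$, where $D$ is as in~\eqref{E: Def of D}. Combining this with $\CC_\infty$-linearity of $D$, the identity $X^{(n)} \bv^{(n)} = (X\bv)^{(n)} = \by^{(n)}$ (valid because the nonzero entries of $\bv$ lie in $\oK$), and the elementary observation that $D(M \bc) = D(M)\, \bc$ whenever $\bc$ has $t$-constant entries, one rewrites
\[
\delta_0 \circ \iota(\bg + \bw) \;=\; D(\bh + \by)\big|_{t = \theta} \;=\; \sum_{n = 0}^{\infty} D\Bigl(\Bigl(\prod_{m=1}^{n} \Theta^{(m)}\Bigr) X^{(n)}\Bigr)\Big|_{t = \theta}\, \bv^{(n)},
\]
matching the expression in the proposition exactly and completing the proof.
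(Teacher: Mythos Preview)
Your proposal is correct and follows essentially the same approach as the paper. The paper proves Proposition~\ref{P: special case of general formulae} as the special case $Q_i = u_i$ and $(c_{1,1},\ldots,c_{r,d_r})^{\tr} = \bv$ of the Technical Lemma~\ref{L:technical lemma}, and that lemma is proved precisely via Anderson's Theorem~\ref{T: Anderson} with the very same $\bw$ and $\bg$ you construct (your $\bh$ is the transpose of the paper's series~\eqref{E: Def of g}); the only cosmetic difference is that the paper works in row-vector form with $(\Phi'^{-1})^{(m)}$ while you transpose and use $\Theta^{(m)}$, and the paper separates out the convergence estimate as Proposition~\ref{P:Convergence} rather than the Technical Lemma itself. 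One small point worth making explicit in your write-up: to invoke the convergence result you should record that for the specific $\bv$ the only nonzero constants are $c_{i,d_i} = (-1)^{r-i} u_i\cdots u_r$, and the hypotheses $|u_i|_\infty \le q^{s_i q/(q-1)}$ for $i<r$ together with $|u_r|_\infty < q^{s_r q/(q-1)}$ give $|c_{i,d_i}|_\infty < q^{d_i q/(q-1)} = q^{d_i + d_i/(q-1)}$, matching the bound required in Proposition~\ref{P:Convergence}; you should also note (as the paper does at the start of the proof of Lemma~\ref{L:technical lemma}) that $\delta_0\circ\iota$ is continuous, which justifies commuting it with your infinite sum.
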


\subsection{Proof of Theorem~\ref{T:Logcoords}}
Now we give a proof of Theorem~\ref{T:Logcoords}.
Our starting point is  Proposition~\ref{P: special case of general formulae}. For each $n \geq 0$, we calculate that
\[ 
D \left( \left( \prod_{1 \leq m \leq n} \Theta \twistk{m} \right)  X \twistk{n} \right) \bv\twistk{n} =
\left( \begin{matrix} \beta_{n,1} \\ \vdots \\ \beta_{n,r} \end{matrix} \right), 
\]
where $\beta_{n,i} \in \Mat_{d_i\times 1}(\C_\infty(t))$ for $1\leq i \leq r$ is given by
\begin{align*}
\beta_{n, i} &= \sum_{j=i}^{r} \partial_t^{d_i} \left[ (-1)^{j-i} \sum_{i=k_0 \leq \cdots \leq k_n=j} \left( \prod_{1 \leq m \leq n} \dfrac{\prod_{k_{m-1} \leq k < k_{m}} u_{k}^{q^{m-1}}}{(t-\theta^{q^{m}})^{d_{k_{m}}}} \right) \cdot (-1)^{r-j} (u_j \cdots u_r)^{q^n} \right] \\
&= (-1)^{r-i} \partial_t^{d_i} \left[ \sum_{i=k_0 \leq \cdots \leq k_n \leq r} \ \dfrac{\displaystyle{\prod_{1 \leq m \leq n+1}} \ \displaystyle{\prod_{k_{m-1} \leq k < k_{m}}} u_{k}^{q^{m-1}}}{  {\displaystyle \prod_{1 \leq m \leq n} } (t-\theta^{q^{m}})^{d_{k_{m}}}  } \right] \ \ (k_{n+1} := r + 1) \\
&= (-1)^{r-i} \partial_t^{d_i} \left[ \sum_{i=k_0 \leq \cdots \leq k_n \leq r} \ \dfrac{\displaystyle{\prod_{0 \leq m \leq n}} \ \displaystyle{\prod_{k_{m} \leq k < k_{m+1}}} u_{k}^{q^{m}}}{  {\displaystyle \prod_{1 \leq m \leq m' \leq n} \ \prod_{k_{m'} \leq k < k_{m'+1}} } (t-\theta^{q^{m}})^{s_{k}}  } \right] \\
&= (-1)^{r-i} \partial_t^{d_i} \left[ \sum_{i=k_0 \leq \cdots \leq k_n \leq r} \ \dfrac{\displaystyle{\prod_{0 \leq m \leq n}} \ \displaystyle{\prod_{k_{m} \leq k < k_{m+1}}} u_{k}^{q^{m}}}{  {\displaystyle \prod_{1 \leq m' \leq n} \ \prod_{k_{m'} \leq k < k_{m'+1}} } \LL_{m'}^{s_{k}}  } \right] \\
&= (-1)^{r-i} \partial_t^{d_i} \left[ \sum_{i=k_0 \leq \cdots \leq k_n \leq r} \ \prod_{0 \leq m \leq n} \ \prod_{k_{m} \leq k < k_{m+1}} \dfrac{u_{k}^{q^{m}}}{\LL_{m}^{s_{k}}} \right].
\end{align*} 
Note that for $n \geq 0$ and $1 \leq i \leq r$, we have a bijective map of sets
\[
\left \{ (k_0, \dots, k_n) \mid i = k_0 \leq \cdots \leq k_n \leq r) \right \} \to \left \{ (m_i, \dots, m_r) \mid 0 \leq m_i \leq \cdots \leq m_r = n \right \}
\]
given by $m_{k} := \max \{ m | k_{m} \leq k \}$ and is explained by the following table. \\ \\
\begin{tabular}{|c||c|c|c|c|c|c|c|c|c|c|c|c|c|c|c|}
\hline
$k$ & $k_{0} = i$ & $\cdots$ & $k_{1}-1$ & $k_{1}$ & $\cdots$ & $k_{2}-1$ & $k_{2}$ & $\cdots$ & $k_{3}-1$ & $k_{3}$ & $\cdots$ & $k_{n}-1$ & $k_{n}$ & $\cdots$ & $r$ \\ \hline
$m_{k}$ & $0$ & $\cdots$ & $0$ & $1$ & $\cdots$ & $1$ & $2$ & $\cdots$ & $2$ & $3$ & $\cdots$ & $n-1$ & $n$ & $\cdots$ & $n$ \\ \hline
\end{tabular} \\ \\
The inverse map is given by $k_{m} := i + \# \{ k | m_{k} < m \}$.
Therefore we can express $\beta_{n,i}$ as the following:
\[
\beta_{n, i} = (-1)^{r-i} \partial_t^{d_i} \left[ \sum_{0 \leq m_i \leq \cdots \leq m_r = n} \dfrac{u_{i}^{q^{m_i}} \cdots u_r^{q^{m_r}}}{\LL_{m_i}^{s_i} \cdots \LL_{m_r}^{s_r}} \right].
\]

Thus, summing $\beta_{n,i}$ over $n\geq 0$ gives
\[
\sum_{n = 0}^\infty \beta_{n,i}  =  (-1)^{r-i} \partial_t^{d_i}  \left [ \sum_{ 0 \leq m_i \leq \dots \leq m_r} \frac{ u_i^{q^{m_i}}  \dots u_r^{q^{m_r}}}{ \LL_{m_i}^{s_i}  \dots \LL_{m_r}^{s_r}} \right ] = (-1)^{r-i} \partial_t^{d_i}  \left [ \fLis_{(s_r,\dots,s_i)}(t;u_r,\dots,u_i)\right ].
\]
 Note that the first equality comes from the continuity of of the map $\partial_{t}^{d_{i}}[\cdot] : \TT_{\theta} \to \Mat_{d_{i} \times 1}(\TT_{\theta})$,  which is clear from the definition of $\partial_{t}^{d_{i}}[\cdot]$.
To finish the proof,  we note that evaluating at $t=\theta$ gives 
\[
\sum_{n=0}^{\infty}
 \left. D\left(  \left( \prod_{1 \leq m \leq n} \Theta \twistk{m} \right)  X \twistk{n} \right) \right|_{t=\theta} \bv \twistk{n} 
=
\left .\sum_{n = 0}^\infty\left( \begin{matrix} \beta_{n,1} \\ \vdots \\ \beta_{n,r} \end{matrix} \right) \right |_{t=\theta}
=
\begin{pmatrix}
Y_{1}\\
\vdots\\
Y_{r}
\end{pmatrix}\]
and so by Proposition~\ref{P: special case of general formulae} we have
\[\Exp_{G} (Y_{1},\dots,Y_{r})^\tr =\bv. \]
Then we complete the proof by using Lemma~\ref{Yu's lemma} (since $\Exp_{G}(\Log_{G}(\bv))=\bv$)  and Theorem~\ref{T: Thm4.2.3 of CM17} that the $(d_{1} + \cdots + d_{i})$-th coordinates of $\Log_G(\bv)$ and  $\bY_{\fs,\bu}$ coincide with \[(-1)^{r-i} \Lis_{(s_{r},\ldots,s_{i})} (u_{r},\ldots,u_{i})\] for each $i$.

\begin{remark}
In the case of $r=1$, the formulae above are due to Papanikolas. See~\cite[Prop. 4.3.6]{Pp} and also \cite[(4.3.1)]{Pp}.
\end{remark}

The following is an immediate consequence of Theorem~\ref{T:Logcoords} and it will be applied in the proof of our main result, Theorem~\ref{T: IntrodT2}.

\begin{corollary}\label{C: partial bY}
 Let notation and assumptions be given in Theorem~\ref{T: Thm4.2.3 of CM17} and Theorem~\ref{T:Logcoords}. Then for any polynomial $b\in \FF_{q}[t]$, we have 
\[
\partial \rho(b) \Log_{G}(\bv)=\partial \rho(b) \bY_{\fs,\bu}=
\begin{pmatrix}
\partial\bC^{\otimes d_{1}}(b)\left( Y_{1}\right)\\
\vdots\\
\partial \bC^{\otimes d_{r}}(b)\left( Y_{r}\right)
\end{pmatrix}\in \Mat_{d}(\CC_{\infty}),
\] where for each $1\leq i\leq r$, $\partial\bC^{\otimes d_{i}}(b) \left(Y_{i}\right)$ is explicitly given by the following formula

\[
\begin{array}{rl}
\partial\bC^{\otimes d_{i}}(b) \left(Y_{i}\right) & = \left(
\begin{matrix}
(-1)^{r-i} \left(\partial_{t}^{d_{i}-1} b \fLis_{(s_{r},\ldots,s_{i}), (u_{r},\ldots,u_{i})}(t)\right)|_{t=\theta}\\
(-1)^{r-i} \left(\partial_{t}^{d_{i}-2} b \fLis_{(s_{r},\ldots,s_{i}), (u_{r},\ldots,u_{i})}(t)\right)|_{t=\theta}\\
\vdots\\
(-1)^{r-i} \left(\partial_{t}^{0} b \fLis_{(s_{r},\ldots,s_{i}), (u_{r},\ldots,u_{i})}(t)\right)|_{t=\theta}
\end{matrix}
\right)\\
&\\
&=\left(
\begin{matrix}
(-1)^{r-i} \left(\partial_{t}^{d_{i}-1} b \fLis_{(s_{r},\ldots,s_{i}), (u_{r},\ldots,u_{i})}(t)\right)|_{t=\theta}\\
(-1)^{r-i} \left(\partial_{t}^{d_{i}-2} b \fLis_{(s_{r},\ldots,s_{i}), (u_{r},\ldots,u_{i})}(t)\right)|_{t=\theta}\\
\vdots\\
(-1)^{r-i} b(\theta) \Lis_{(s_{r},\ldots,s_{i})} (u_{r},\ldots,u_{i})
\end{matrix}
\right)\in \Mat_{d_i \times 1}(\CC_{\infty}).
\end{array}
\]
\end{corollary}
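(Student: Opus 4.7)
The first equality $\partial\rho(b)\Log_G(\bv) = \partial\rho(b)\bY_{\fs,\bu}$ is immediate, since Theorem~\ref{T:Logcoords} gives $\Log_G(\bv) = \bY_{\fs,\bu}$ and $\partial\rho(b)$ is a fixed linear operator on $\Lie G$. So the content of the corollary is the block-by-block identification of $\partial\rho(b)\bY_{\fs,\bu}$ with the claimed column of hyperderivatives of $b(t)\fLis_{(s_r,\ldots,s_i),(u_r,\ldots,u_i)}(t)$ at $t=\theta$.

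The plan is to compute $\partial\rho(b)\bY_{\fs,\bu}$ block by block using the two structural facts already at hand. First, by Proposition~\ref{P: formula of partial rho}, $\partial\rho(b)$ is block diagonal with $i$-th diagonal block equal to $d_t^{d_i}[b(t)]|_{t=\theta}$. So applying $\partial\rho(b)$ to $\bY_{\fs,\bu}$ yields the column whose $i$-th block is $d_t^{d_i}[b(t)]|_{t=\theta}\,Y_i$. Second, the definition of $Y_i$ in Theorem~\ref{T:Logcoords} is precisely
\[
Y_i = (-1)^{r-i}\,\partial_t^{d_i}\!\bigl[\fLis_{(s_r,\ldots,s_i),(u_r,\ldots,u_i)}(t)\bigr]\Big|_{t=\theta},
\]
in the $\partial$-matrix notation of \eqref{E: partial matrix} applied to the single series $\fLis_{(s_r,\ldots,s_i),(u_r,\ldots,u_i)}(t)\in \TT_\theta$.

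Now I would invoke Proposition~\ref{P:dmatfacts}(2), which asserts the $d$-matrix/$\partial$-matrix compatibility
\[
d_t^m[h]\,\partial_t^m[g_1,\ldots,g_n] = \partial_t^m[hg_1,\ldots,hg_n]
\]
as an identity over $\laurent{\CC_\infty}{t}$. Taking $m=d_i$, $h=b(t)$ and $g_1 = \fLis_{(s_r,\ldots,s_i),(u_r,\ldots,u_i)}(t)$, this gives
\[
d_t^{d_i}[b(t)]\,\partial_t^{d_i}\!\bigl[\fLis_{(s_r,\ldots,s_i),(u_r,\ldots,u_i)}(t)\bigr]
= \partial_t^{d_i}\!\bigl[b(t)\,\fLis_{(s_r,\ldots,s_i),(u_r,\ldots,u_i)}(t)\bigr]
\]
in $\Mat_{d_i\times 1}(\TT_\theta)$, since $b(t)\fLis_{(s_r,\ldots,s_i),(u_r,\ldots,u_i)}(t)\in \TT_\theta$ (the hypothesis $|u_j|_\infty \leq q^{s_j q/(q-1)}$ together with the strict inequality for $j=r$ ensures $\fLis\in\TT_\theta$, and $b(t)\in\FF_q[t]\subset\TT_\theta$). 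Specializing at $t=\theta$ is legitimate because $\partial_t^k[b\,\fLis]$ lies in $\TT_\theta$ for every $k$ (cf.\ the remark following Proposition~\ref{P: Taylor coeff}). Multiplying by the sign $(-1)^{r-i}$ yields the $i$-th block of the formula in the statement, and assembling the blocks completes the proof. No real obstacle is anticipated: the argument is a direct concatenation of Theorem~\ref{T:Logcoords}, Proposition~\ref{P: formula of partial rho}, and the $d$-matrix/$\partial$-matrix identity in Proposition~\ref{P:dmatfacts}(2), plus the observation that all the series involved remain in $\TT_\theta$ so that specialization at $t=\theta$ commutes with the hyperderivatives.
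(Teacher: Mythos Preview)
Your proposal is correct and follows exactly the paper's approach: the paper's proof is a one-line appeal to Proposition~\ref{P:dmatfacts}(2) together with Theorem~\ref{T:Logcoords} and Proposition~\ref{P: formula of partial rho}, which is precisely the combination you invoke. You have simply spelled out the details of how these three ingredients fit together.
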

\begin{proof}
The proof follows from Proposition \ref{P:dmatfacts} (2) together with Theorem~\ref{T:Logcoords} and Proposition~\ref{P: formula of partial rho}.
\end{proof}

\section{Proof of Technical Lemma}\label{Sec: general formulae}

Throughout this section, we fix a positive integer $r$, and  an index $\fs=(s_{1},\ldots,s_{r})\in \NN^{r}$ with $d_{i}$ defined in (\ref{E: d_i}). Fix $r-1$ polynomials $Q_{1},\ldots,Q_{r-1}\in \oK[t]$ and let $\Phi'\in \Mat_{r}(\oK[t])$ be the matrix~(\ref{E:Phi s'}) defined using $Q_{1},\ldots,Q_{r-1}$, and let $G$ be the $t$-module over $\oK$ associated to the dual $t$-motive $M'$ defined by $\Phi'$ in Sec.~\ref{Subsec: t-module G}. The main goal of this section is to give an explicit formula for the coordinates of the logarithm of the $t$-module $G$ in the general setting.  To do this, we apply Theorem \ref{T: Anderson} to a specially crafted function $\bg$, which culminates in Lemma \ref{L:technical lemma}.  
 
\subsection{Constructions of $\bg$ and its convergence}  The aim of this subsection is to construct a solution $\bg\in \Mat_{1\times r}(\TT_{\theta})$ of the difference equation \eqref{AndersonCondition} under certain conditions.

\subsubsection{Gauss norms} First, we define a seminorm on $\Mat_{\ell \times m}(\C_\infty)$ for $B =  (b_{i,j}) \in \Mat_{\ell \times m}(\C_\infty)$ by setting
\[ \lVert B\rVert = \max_{i,j}\left\{ | b_{i,j} |_{\infty} \right\}.\]
Note that the seminorm is only submultiplicative, i.e. for matrices $B\in \Mat_{k \times \ell}(\C_\infty)$ and $C \in \Mat_{\ell \times m}(\C_\infty)$
\[\lVert BC\rVert\leq \lVert B\rVert \cdot \lVert C\rVert.\]
The above inequality also gives
\[\lVert B\inv \rVert \geq \left (\lVert B\rVert \right )\inv \] for any invertible matrix $B$ with entries in $\CC_{\infty}$. We also have identities and inequalities for $\alpha\in \C_\infty$ and $B,C \in \Mat_{\ell \times m}(\C_\infty)$
\[\lVert \alpha B\rVert = |\alpha|_{\infty}\cdot \lVert B\rVert,\quad \lVert B+C\rVert\leq \max\left\{ \lVert B\rVert, \lVert C\rVert\right\}.\]
Then, for $\alpha \in \C_\infty^{\times}$ we define the Tate algebra
\begin{equation}\label{Tatealgdef}
   \TT_\alpha = \left \{ \sum_{i=0}^\infty b_i t^i \in \power{\C_\infty}{t} \biggm| \big\lvert b_i \alpha^i \big\rvert_{\infty} \to 0 \right \}.
\end{equation}
 If $|\alpha|_{\infty} \geq 1$, then $\TT_{\alpha}$ is stable under the action $f \mapsto f^{(n)}$ for each $n \geq 0$.
Define the Gauss norm $\lVert \cdot \rVert_\alpha$ on $\TT_{\alpha}$ by putting \[ \lVert f \rVert_{\alpha}:=\max_{i} \left\{ |b_{i}\alpha^{i}|_{\infty}  \right\} \]
  for $f=\sum_{i \geq 0} b_{i}t^{i}\in \TT_{\alpha}$. We then extend the Gauss norm to $\Mat_{\ell \times m}\left (\TT_{\alpha}\right )$ by setting
\[\lVert \bh \rVert_\alpha = \max_{i,j} \left\{ \lVert h_{ij} \rVert_{\alpha} \right\}\] 
for $\bh=(h_{ij}) \in  \Mat_{\ell,m}\left (\TT_{\alpha}\right )$.
We mention that $\lVert \bh \rVert_{\alpha}$ coincides with $\lVert \bh \rVert$ when $\bh\in \Mat_{\ell\times m}(\CC_{\infty})$. Then for $\bh\in \Mat_{k \times \ell}(\TT_\alpha)$ and $\bk \in \Mat_{\ell \times m}(\TT_\alpha)$
\[\lVert\bh\bk\rVert_\alpha\leq \lVert\bh\rVert_\alpha \cdot  \lVert\bk\rVert_\alpha.\]
 Since $\TT_{\alpha} \to \TT_{1} ; t \mapsto \alpha t$ is an isomorphism of normed algebras and $\TT_{1}$ is complete (cf.~\cite[Sec.~1.4, Prop.~3]{BGR84}), $\Mat_{\ell \times m}(\TT_\alpha)$ is complete under the Gauss norm $\lVert \cdot \rVert_{\alpha}$.

\subsubsection{Definition of $\bg$ and its convergence} 
 We fix a vector $\bw = (w_1, \dots, w_r)\in \Mat_{1\times r}(\CC_{\infty}[t])$ with
\[w_i = c_{i,1}(t-\theta)^{d_{i}-1}+c_{i,2}(t-\theta)^{d_{i}-2} +\cdots+c_{i,d_{i}}, \quad c_{i,j}\in \CC_{\infty}, \quad 1\leq i\leq r,\]
then define
\begin{equation}\label{E: Def of g}
\bg = \bw\twist (\Phi\pinv)\twist + \bw \twistk{2} (\Phi\pinv)\twistk{2}(\Phi\pinv)\twist +\bw \twistk{3} (\Phi\pinv)\twistk{3}(\Phi\pinv)\twistk{2}  (\Phi\pinv)\twist  +\cdots.
\end{equation}
A quick check shows that the general term $\bw \twistk{n} (\Phi\pinv)\twistk{n} \cdots  (\Phi\pinv)\twist $ is in $\Mat_{1\times r}(\TT_{\theta})$ and under certain hypothesis on the polynomials $Q_{i}$'s and $\bw$, it is shown to converge to zero under the Gauss norm $\lVert \cdot\rVert_{\theta}$ in the following proposition.
\begin{proposition}\label{P:Convergence} 
Assume that $\lVert Q_i \rVert_{1} \leq q^{\frac{s_i q}{q-1}}$ for each $1 \leq i \leq r-1 $. Let $\bw = (w_1, \dots, w_r)\in \Mat_{1\times r}(\CC_{\infty}[t])$ be given above with the condition that $|c_{i,j}|_{\infty} < q^{j+\frac{d_i}{q-1}}$ for each $1 \leq i \leq r$ and $1 \leq j \leq d_i$. Then the formal series $\bg$ defined in {\rm{(\ref{E: Def of g})}} converges in $\Mat_{1\times r}(\TT_{\theta})$.
\end{proposition}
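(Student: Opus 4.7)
The plan is to bound the Gauss norm of each term
\[
\bg_{n} := \bw^{(n)}(\Phi'^{-1})^{(n)}(\Phi'^{-1})^{(n-1)}\cdots (\Phi'^{-1})^{(1)}
\]
of the formal series and show that it decays super-exponentially in $n$; since $\Mat_{1\times r}(\TT_{\theta})$ is complete with respect to $\|\cdot\|_{\theta}$, this will force convergence of $\bg$.

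First I would compute $(\Phi')^{-1}$ explicitly. Factoring $\Phi' = D(I+L)$, where $D = \diag((t-\theta)^{d_{i}})$ and $L$ is the nilpotent subdiagonal matrix with $L_{i,i-1} = Q_{i-1}^{(-1)}(t-\theta)^{s_{i-1}}$, inversion gives
\[
((\Phi')^{-1})_{i,j} = (-1)^{i-j}\,\frac{\prod_{j\leq k<i} Q_{k}^{(-1)}}{(t-\theta)^{d_{i}}} \qquad (i\geq j),
\]
using the identity $\sum_{j\leq k<i}s_{k} = d_{j}-d_{i}$. Applying the Frobenius twist and combining the standard estimate $\|1/(t-\theta^{q^{m}})\|_{\theta} = q^{-q^{m}}$ for $m\geq 1$ with the bound $\|Q_{k}^{(m-1)}\|_{\theta} \leq q^{s_{k}q^{m}/(q-1)+\deg Q_{k}}$ (which follows from the hypothesis $\|Q_{k}\|_{1}\leq q^{s_{k}q/(q-1)}$), I would obtain the entrywise estimate
\[
\bigl\|((\Phi'^{-1})^{(m)})_{i,j}\bigr\|_{\theta} \leq q^{(d_{j}q^{m} - d_{i}q^{m+1})/(q-1)+O(1)}.
\]

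The key step is to expand the product $(\Phi'^{-1})^{(n)}\cdots (\Phi'^{-1})^{(1)}$ as a sum over nondecreasing paths $j = k_{0}\leq k_{1}\leq \cdots \leq k_{n} = i$, so that the $(k_{m},k_{m-1})$-entry of $(\Phi'^{-1})^{(m)}$ contributes to the $m$-th factor. The exponent contributed by each path telescopes:
\[
\sum_{m=1}^{n}\frac{d_{k_{m-1}}q^{m} - d_{k_{m}}q^{m+1}}{q-1} = \frac{q\,d_{j} - q^{n+1}d_{i}}{q-1},
\]
depending only on the endpoints. Since the Gauss norm is ultrametric, the norm of the sum over paths is bounded by the maximum of its summands, yielding
\[
\bigl\|((\Phi'^{-1})^{(n)}\cdots (\Phi'^{-1})^{(1)})_{i,j}\bigr\|_{\theta} \leq q^{(qd_{j} - q^{n+1}d_{i})/(q-1) + C}
\]
for a constant $C$ independent of $n$.

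To close, the strict inequality $|c_{i,j}|_{\infty} < q^{j + d_{i}/(q-1)}$ gives $\|w_{i}\|_{\theta} < q^{d_{i}q/(q-1)}$ strictly for every $i$, so there exists $\delta > 0$ with $\|w_{i}^{(n)}\|_{\theta} \leq \|w_{i}\|_{\theta}^{q^{n}} \leq q^{d_{i}q^{n+1}/(q-1) - \delta q^{n}}$. Multiplying this against the path bound above, the $d_{i}q^{n+1}/(q-1)$ terms cancel exactly, and one is left with
\[
\|\bg_{n}\|_{\theta} \leq q^{qd_{1}/(q-1) + C}\, q^{-\delta q^{n}} \longrightarrow 0.
\]
I expect the main obstacle to be setting up the bookkeeping so that the telescoping is visible: the factorization of $\Phi'$ and the Gauss-norm estimate must be arranged so that, along any path of length $n$, the contributions from the interior indices cancel and only endpoint data survive. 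The strict form of the hypothesis on the $c_{i,j}$ is essential—it is precisely what supplies the uniform slack $\delta > 0$ that produces geometric decay after the telescoping has absorbed all the $q^{n+1}$-growth.
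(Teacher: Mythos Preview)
Your proposal is correct and follows essentially the same approach as the paper: explicit computation of $(\Phi')^{-1}$, expansion of the iterated product as a sum over nondecreasing paths $j=k_0\leq\cdots\leq k_n=i$, and a telescoping of the exponent $\sum_m (d_{k_{m-1}}q^m - d_{k_m}q^{m+1})/(q-1)$ down to endpoint data. The only cosmetic difference is that the paper tracks each coefficient $c_{i,j}$ individually through the estimate (obtaining the ratio $(|c_{i,j}|/q^{j+d_i/(q-1)})^{q^n}\to 0$), whereas you package this step via the inequality $\|w_i^{(n)}\|_\theta\leq\|w_i\|_\theta^{q^n}$ together with $\|w_i\|_\theta<q^{d_iq/(q-1)}$; both are equivalent and yield the same decay.
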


\begin{proof}
Let $\mu_i := \deg_t Q_i$.
It is clear that $\lVert Q_i^{(n)} \rVert_{\theta} \leq q^{\frac{s_i q^{n+1}}{q-1} + \mu_i}$ for each $n \geq 0$.
For each $n \geq 1$, we have $\dfrac{1}{t - \theta^{q^{n}}} \in \TT_{\theta}$ and $\left\lVert \dfrac{1}{t - \theta^{q^{n}}} \right\rVert_{\theta} = q^{-q^{n}}$. 
From the definition of $\Phi'$ immediately following \eqref{E:Phi s'}, we calculate that
\begin{equation}\label{Phi'inv}
\Phi\pinv = 
\left (\begin{matrix}
\frac{1}{(t-\theta)^{d_1}}  & 0 & 0 & \hdots & 0\\
\frac{-Q_1\twistinv}{(t-\theta)^{d_2}} & \frac{1}{(t-\theta)^{d_2}} & 0 & \hdots & 0 \\
\frac{(Q_1Q_2)\twistinv}{(t-\theta)^{d_3}} & \frac{-Q_2\twistinv}{(t-\theta)^{d_3}} & \frac{1}{(t-\theta)^{d_3}} & \hdots & 0\\
\vdots & \vdots & \vdots & \ddots & \vdots \\
\frac{(-1)^{r-1} (Q_1\dots Q_{r-1})\twistinv}{(t-\theta)^{d_r}}&\frac{(-1)^{r-2}(Q_2\dots Q_{r-1})\twistinv}{(t-\theta)^{d_r}} &\frac{(-1)^{r-3}(Q_3\dots Q_{r-1})\twistinv}{(t-\theta)^{d_r}} & \hdots & \frac{1}{(t-\theta)^{d_r}}
\end{matrix}\right ) \in \Mat_r(\overline K(t)).
\end{equation}
So for each $n \geq 0$ and $1 \leq \ell \leq r$, the $\ell$-th component of
$\bw \twistk{n} \prod_{1 \leq m \leq n} (\Phi \pinv) \twistk{n+1-m}$
is
\[ 
\sum_{\substack{\ell \leq i \leq r \\ 1 \leq j \leq d_i}} (-1)^{i-\ell} c_{i,j}^{q^n} (t - \theta^{q^{n}})^{d_{i} - j} \sum_{\ell = k_0 \leq k_1 \leq \cdots \leq k_n = i} \ \prod_{1 \leq m \leq n} \dfrac{\prod_{k_{m-1} \leq k < k_{m}} Q_{k} \twistk{m-1}}{(t - \theta^{q^{m}})^{d_{k_{m}}}}. 
\]

Then we calculate the Gauss norm for the general term:
\[
 \left \lVert (t - \theta^{q^{n}})^{d_{i} - j} \prod_{1 \leq m \leq n} \dfrac{\prod_{k_{m-1} \leq k < k_{m}} Q_{k} \twistk{m-1}}{(t - \theta^{q^{m}})^{d_{k_{m}}}} \right \rVert_{\theta} = q^{\alpha} 
\]
where $\alpha$ satisfies
\begin{align*}
\alpha & \leq (d_{i} - j) q^{n} + \sum_{1 \leq m \leq n} \left( \sum_{k_{m-1} \leq k < k_{m}} \left( \dfrac{s_{k} q^{m}}{q-1} + \mu_{k} \right) - d_{k_{m}} q^{m} \right) \\
& = \sum_{k_{0} \leq k < k_{n}} \mu_{k} + (d_{i} - j) q^{n} + \sum_{1 \leq m \leq n} \left( \dfrac{d_{k_{m-1}} - d_{k_{m}}}{q-1} q^{m} - d_{k_{m}} q^{m} \right) \\
& = \sum_{\ell \leq k < i} \mu_{k} + (d_{i} - j) q^{n} + \dfrac{d_{k_0} q}{q-1} - \dfrac{d_{k_{n}} q^{n+1}}{q-1} + \sum_{1 \leq m \leq n} d_{k_{m}} \left( \dfrac{q^{m+1}-q^{m}}{q-1} - q^{m} \right) \\
& = \sum_{\ell \leq k < i} \mu_{k} + \dfrac{d_{\ell} q}{q-1} - \left( j+\dfrac{d_i}{q-1} \right) q^n.
\end{align*} 
Therefore we have
\begin{align*}
\left \lVert \bw \twistk{n}\prod_{1 \leq m \leq n} (\Phi \pinv) \twistk{n+1-m} \right \rVert_{\theta}
 \leq &\max_{\substack{1 \leq \ell \leq i \leq r \\ 1 \leq j \leq d_i}} \left\{ q^{(\mu_{\ell} + \cdots + \mu_{i-1}) + \frac{d_{\ell} q}{q-1}} \left( \dfrac{  |c_{i,j}|_{\infty} }{q^{j+\frac{d_i}{q-1}}} \right)^{q^n} \right\},
\end{align*}
which goes to 0 as $(n \to \infty)$.  Since $\TT_{\theta}$ is complete with respect to $\lVert \cdot \rVert_{\theta}$, we have the desired result.

\end{proof}
\begin{remark}
Note that the condition on $|c_{i,j}|_{\infty}$ coincides with \cite[Prop.~4.2.2]{CM17b}.
\end{remark}

\begin{remark}\label{g:difference-eq} 
 Note that from the definition one sees that $\bg$ satisfies the difference equation
\[ \bg^{(-1)}\Phi'-\bg=\bw  .\]

\end{remark}

\begin{remark}\label{R: DeltaIota_a}
Using Proposition~\ref{P: Taylor coeff}, we can rewrite $\delta_{0}\circ \iota (\ba)$ in Proposition~\ref{P: delta0} as
\begin{align} \label{E: DeltaIota_a} 
\begin{array}{r}
\begin{array}{rrccccccccc}
& & \multicolumn{4}{c}{\overbrace{\hspace{14.0em}}^{d_{1}}} & \multicolumn{4}{c}{\overbrace{\hspace{14.0em}}^{d_{2}}} & \\
\delta_0 \circ \iota(\ba) = \hspace{-0.5em} & \ldelim({1}{0em} & (\partial_{t}^{d_{1}-1} a_{1})|_{t=\theta}, \hspace{-0.6em} & \ldots, \hspace{-0.6em} & (\partial_{t}^{1} a_{1})|_{t=\theta}, \hspace{-0.6em} & a_{1}(\theta), \hspace{-0.6em} & (\partial_{t}^{d_{2}-1} a_{2})|_{t=\theta}, \hspace{-0.6em} & \ldots, \hspace{-0.6em} & (\partial_{t}^{1} a_{2})|_{t=\theta}, \hspace{-0.6em} & a_{2}(\theta), \hspace{-0.6em} & \ldots \hspace{1.0em}
\end{array} \\
\begin{array}{cccccl}
& \multicolumn{4}{c}{\overbrace{\hspace{14.0em}}^{d_{r}}} & \\
\ldots, \hspace{-0.6em} & (\partial_{t}^{d_{r}-1} a_{r})|_{t=\theta}, \hspace{-0.6em} & \ldots, \hspace{-0.6em} & (\partial_{t}^{1} a_{r})|_{t=\theta}, \hspace{-0.6em} & a_{r}(\theta) & \hspace{-0.6em} \rdelim){1}{0.5em}^{\tr}.
\end{array}
\end{array} 
\end{align}
Moreover, Theorem~\ref{T: Anderson} can be reformulated via hyperderivatives as the following. Let the notation be as given  in Theorem~\ref{T: Anderson} and put $\ba:=\bg+\bw$. Then we have
$\Exp_G \left(\delta_0 \circ \iota(\ba) \right)=\delta_{1}\circ \iota(\bw)$, where $\delta_0 \circ \iota(\ba)$ is given in (\ref{E: DeltaIota_a}).

\end{remark}

\subsection{The formulae} We continue the notation given at the beginning of this section. Define
\[
\Theta_{i, j} := (-1)^{j-i} \dfrac{ \prod_{i \leq k < j} Q_{k}\twistk{-1}}{(t - \theta)^{d_j}} \ (1 \leq i \leq j \leq r)
\textrm{ and }
\Theta := \left( \begin{matrix} \Theta_{1, 1} & \cdots & \Theta_{1, r} \\ & \ddots & \vdots \\ & & \Theta_{r, r} \end{matrix} \right)\]
and note that $\Theta = (\Phi\pinv)^\tr \in \Mat_{ r }(\oK(t))$. Let $X\in \Mat_{r\times d}(A[t])$ be defined in \eqref{E: Def of X} using 
\[
\bx_j :=  \left( (t - \theta)^{d_j-1}, (t - \theta)^{d_j-2}, \dots, 1 \right) \hbox{ for }1\leq j\leq r. 
\]
Let $D := \Mat_{r \times d} (\oK(t)) \to \Mat_{ d} (\oK(t))$  be the operator given in \eqref{E: Def of D} The main result of this subsection is the following technical lemma, which was the main ingredient in the proof of Theorem \ref{T:Logcoords}. The essence of this technical lemma is to give a formula for the logarithm of the $t$-module $G$, evaluated at a sufficiently small argument, in terms of $\Phi$, the defining matrix of the dual $t$-motive $M$ associated to $G$.
\begin{lemma}[Technical Lemma]\label{L:technical lemma}

 Assume that $\lVert Q_i \rVert_{1} \leq q^{\frac{s_i q}{q-1}}$ for each $1 \leq i \leq r-1 $. Let $c_{i,j} \in \CC_{\infty}$ satisfy the condition that $|c_{i,j}|_{\infty} < q^{j+\frac{d_i}{q-1}}$ for each $1 \leq i \leq r$ and $1 \leq j \leq d_i$.  Let $G$ be the $t$-module defined in \S \ref{Subsec: t-module G}. Then the following identity holds:
\begin{equation} \label{logformula}
\Exp_{G}\left( \sum_{n = 0}^{\infty} \left. D \left( \left( \prod_{1 \leq m \leq n} \Theta \twistk{m} \right) X \twistk{n} \right) \right|_{t = \theta} \left( \begin{matrix} c_{1, 1} \\ \vdots \\ c_{r, d_r} \end{matrix} \right)^{\!\!\! (n)}\right)= \left( \begin{matrix} c_{1,1} \\ \vdots \\ c_{r,d_r} \end{matrix} \right).
\end{equation}
\end{lemma}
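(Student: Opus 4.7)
The plan is to derive (\ref{logformula}) as a direct consequence of Anderson's Theorem~\ref{T: Anderson} applied to the pair $(\bg, \bw)$, where $\bw = (w_1, \dots, w_r) \in \Mat_{1 \times r}(\CC_{\infty}[t])$ is the row vector built from the prescribed coefficients $c_{i,j}$ at the start of Section~\ref{Sec: general formulae} and $\bg$ is the series defined in (\ref{E: Def of g}). Both hypotheses of Theorem~\ref{T: Anderson} are already at hand: convergence $\bg \in \Mat_{1 \times r}(\TT_\theta)$ follows from Proposition~\ref{P:Convergence}, and the functional equation $\bg^{(-1)} \Phi' - \bg = \bw$ is Remark~\ref{g:difference-eq}. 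Theorem~\ref{T: Anderson} then gives
\[
\Exp_{G}\bigl(\delta_0 \circ \iota(\bg + \bw)\bigr) = \delta_1 \circ \iota(\bw),
\]
and the remainder of the argument will be to match each side of this identity with the corresponding side of (\ref{logformula}).

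The right-hand side is the easier of the two. Since each $w_i$ is a polynomial in $(t - \theta)$ of degree strictly less than $d_i$, writing $\bw$ in terms of the $\CC_\infty[\sigma]$-basis (\ref{sigmabasis}) of $M'$ shows that $\iota(\bw)$ has $\sigma$-degree zero and its constant term is exactly the stacked vector $\bc := (c_{1,1}, \dots, c_{r, d_r})^\tr$. By the defining formula (\ref{E: Def of delta1}) for $\delta_1$, we therefore get $\delta_1 \circ \iota(\bw) = \bc$, which is the right-hand side of (\ref{logformula}).

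For the left-hand side, Remark~\ref{R: DeltaIota_a} (combining Proposition~\ref{P: delta0} and Proposition~\ref{P: Taylor coeff}) shows that $\delta_0 \circ \iota$ acts on a row $(a_1, \dots, a_r) \in \Mat_{1 \times r}(\TT_\theta)$ by stacking the columns $\partial_t^{d_i}[a_i]$ and evaluating at $t = \theta$; in other words, it is $D((\cdot)^\tr)\big|_{t = \theta}$, where we extend $D$ entrywise to $r \times 1$ inputs. Transposing the defining series (\ref{E: Def of g}) and using $\Theta = (\Phi'^{-1})^\tr$ converts it into
\[
(\bg + \bw)^\tr = \sum_{n \geq 0} \left( \prod_{1 \leq m \leq n} \Theta\twistk{m} \right) (\bw\twistk{n})^\tr.
\]
A direct block computation shows $\bw^\tr = X \bc$, whence $(\bw\twistk{n})^\tr = X\twistk{n} \bc\twistk{n}$. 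Because each hyperderivative $\partial_t^{k}$ is continuous on $\TT_\theta$ and the entries of $\bc\twistk{n}$ are constants, applying $D$, passing it through the convergent sum, and then evaluating at $t = \theta$ yields
\[
\delta_0 \circ \iota(\bg + \bw) = \sum_{n \geq 0} \left. D \left( \left( \prod_{1 \leq m \leq n} \Theta\twistk{m} \right) X\twistk{n} \right) \right|_{t = \theta} \bc\twistk{n},
\]
which is precisely the argument of $\Exp_G$ on the left-hand side of (\ref{logformula}). Putting the two identifications together proves the lemma.

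The main---and essentially only---technical subtlety is justifying the interchange of the operator $D$ and evaluation at $t = \theta$ with the infinite sum in (\ref{E: Def of g}); this will be handled by the continuity of each $\partial_t^{k}$ on $\TT_\theta$ together with the $\|\cdot\|_\theta$-convergence provided by Proposition~\ref{P:Convergence}. The remaining steps---careful bookkeeping of transposes and Frobenius twists when moving between $\Phi'^{-1}$ and $\Theta$, and the identification $\bw^\tr = X \bc$---are routine block-matrix computations.
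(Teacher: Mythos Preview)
Your proposal is correct and follows essentially the same strategy as the paper: apply Anderson's Theorem~\ref{T: Anderson} to the pair $(\bg,\bw)$, identify $\delta_1\circ\iota(\bw)$ with the column vector $\bc$, and identify $\delta_0\circ\iota(\bg+\bw)$ with the infinite sum via continuity and the hyperderivative description of $\delta_0\circ\iota$. The only cosmetic difference is that the paper carries out the computation of $\delta_0\circ\iota$ term-by-term at the level of explicit coordinates (introducing $c_i^{<n>}(t)$, $\bd_i^{<n>}$, $\Upsilon_{i,j}^{<n>}$), whereas you package the same computation more compactly by transposing to $\Theta$, using the block identity $\bw^{\tr}=X\bc$, and pulling the constant vector $\bc^{(n)}$ through $D$ by $\CC_\infty$-linearity; both routes yield the same identification.
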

\begin{proof}
 Firstly, we remark that $\delta_0 \circ \iota : \Mat_{1\times r}(\TT_{\theta}) \to \CC_{\infty}^{d}$ is continuous. Indeed, let $\bff = (f_{1}, \dots, f_{r}) \in \Mat_{1 \times r}(\TT_{\theta})$ with $f_{i} = \sum_{j=0}^{\infty} b_{i, j} t^{j}$. Since by Proposition~\ref{P: delta0} the $(d_{1} + \cdots + d_{i-1} + j)$-th coordinate of $\delta_0 \circ \iota(\bff)$ is
\[ \sum_{\ell \geq d_{i}-j} {{\ell}\choose{d_{i}-j}} b_{i, \ell} \theta^{\ell-d_{i}+j} \]
for each $1 \leq i \leq r$ and $1 \leq j \leq d_{i}$, we have
\[  \lVert \delta_0 \circ \iota(\bff) \rVert  \leq \max_{\substack{1 \leq i \leq r \\ 1 \leq j \leq d_{i} \\ \ell \geq d_{i}-j}} \left\{ \left| b_{i, \ell} \theta^{\ell-d_{i}+j} \right|_{\infty} \right\} \leq \max_{\substack{1 \leq i \leq r \\ 1 \leq j \leq d_{i} \\ \ell \geq d_{i}-j}} \left\{ \left| b_{i, \ell} \theta^{\ell} \right|_{\infty} \right\} = \max_{\substack{1 \leq i \leq r \\ \ell \geq 0}} \left\{ \left| b_{i, \ell} \theta^{\ell} \right|_{\infty} \right\} = \lVert \bff \rVert_{\theta}. \]
Therefore, $\delta_0 \circ \iota$ is continuous.

Now we consider
\[
\delta_0 \circ \iota \left( \bw \twistk{n} \prod_{1 \leq m \leq n} (\Phi \pinv) \twistk{n+1-m} \right)
\]
with
\[
\bw := (w_1, \dots, w_r), \ \ \ w_j := \sum_{\ell = 1}^{d_j} c_{j, \ell} (t - \theta)^{d_j - \ell}
\]
for each $n \geq 0$.
Putting
\[
(\phi^{<n>}_{i, j}) :=  \left(\prod_{1 \leq m \leq n} \Theta \twistk{m} \right)^{\tr}
\]
and
\begin{equation}\label{cin}
(c_1^{<n>}(t), \dots, c_r^{<n>}(t))
:= \bw \twistk{n} \prod_{1 \leq m \leq n} (\Phi \pinv) \twistk{n+1-m} = \bw \twistk{n} \left( \prod_{1 \leq m \leq n} \Theta \twistk{m} \right)^{\tr},
\end{equation}
then we have that
\[
c_i^{<n>}(t) = \sum_{j = i}^{r} \sum_{\ell = 1}^{d_j} c_{j, \ell}^{q^n} (t - \theta^{q^n})^{d_j - \ell} \phi^{<n>}_{j, i} \\
= \sum_{j = i}^{r} \phi^{<n>}_{j,i} \bx_{j} \twistk{n} \left( \begin{matrix} c_{j, 1} \\ \vdots \\ c_{j, d_j} \end{matrix} \right)^{\!\!\! (n)}
\]
and hence
\begin{align}\label{E: din}
\bd_i^{<n>} &:= \left. \left( \begin{matrix} \partial_t^{d_i-1} c_i^{<n>} \\ \vdots \\  \partial_t^{1} c_i^{<n>} \\ c_i^{<n>} \end{matrix} \right) \right|_{t=\theta}
= {\displaystyle \sum_{j=i}^{r}} \left. \partial_t^{d_i} \left[ \phi^{<n>}_{j,i} \bx_{j} \twistk{n} \right] \right|_{t = \theta} \left( \begin{matrix} c_{j, 1} \\ \vdots \\ c_{j, d_j} \end{matrix} \right)^{\!\!\! (n)} \\
&= {\displaystyle \sum_{j=i}^{r}} \left. \Upsilon_{i, j}^{<n>} \right|_{t = \theta} \left( \begin{matrix} c_{j, 1} \\ \vdots \\ c_{j, d_j} \end{matrix} \right)^{\!\!\! (n)}
= \left. \left( 0, \dots, 0, \Upsilon_{i, i}^{<n>}, \Upsilon_{i, i+1}^{<n>}, \dots, \Upsilon_{i, r}^{<n>} \right) \right|_{t = \theta} \left( \begin{matrix} c_{1, 1} \\ \vdots \\ c_{r, d_r} \end{matrix} \right)^{\!\!\! (n)}, \nonumber
\end{align}
where
\begin{align*}
\Upsilon_{i, j}^{<n>}
:= & \ \partial_t^{d_i} \left[ \phi^{<n>}_{j,i} \bx_{j} \twistk{n} \right].
\end{align*}
Note that the second equality of (\ref{E: din}) comes from the definition of $\partial$-matrices~(\ref{E: partial matrix}), and the last two equalities are from definitions.
Based on the definition~(\ref{cin}), by Remark~\ref{R: DeltaIota_a} and the above calculations we have that
\begin{align*}
\delta_0 \circ \iota \left( \bw \twistk{n} \prod_{1 \leq m \leq n} (\Phi \pinv) \twistk{n+1-m} \right)
&= \left( \begin{matrix} \bd_1^{<n>} \\ \vdots \\ \bd_r^{<n>} \end{matrix} \right)
= \left. \left( \begin{matrix} \Upsilon_{1, 1}^{<n>} & \cdots & \Upsilon_{1, r}^{<n>} \\ & \ddots & \vdots \\ & & \Upsilon_{r, r}^{<n>} \end{matrix} \right) \right|_{t = \theta} \left( \begin{matrix} c_{1, 1} \\ \vdots \\ c_{r, d_r} \end{matrix} \right)^{\!\!\! (n)} \\
&= \left. D \left( \left(\prod_{1 \leq m \leq n} \Theta \twistk{m} \right) X \twistk{n} \right) \right|_{t = \theta} \left( \begin{matrix} c_{1, 1} \\ \vdots \\ c_{r, d_r} \end{matrix} \right)^{\!\!\! (n)}.
\end{align*}
It  follows by Theorem~\ref{T: Anderson}, Proposition~\ref{P:Convergence} and Remark~\ref{g:difference-eq}  that 
\begin{equation*}
 \Exp_{G}  \left( \sum_{n = 0}^{\infty} \left. D \left( \left(\prod_{1 \leq m \leq n} \Theta \twistk{m} \right) X \twistk{n} \right) \right|_{t = \theta} \left( \begin{matrix} c_{1, 1} \\ \vdots \\ c_{r, d_r} \end{matrix} \right)^{\!\!\! (n)} \right)=\left( \begin{matrix} c_{1, 1} \\ \vdots \\ c_{r, d_r} \end{matrix} \right),
\end{equation*} since from the definition~(\ref{E: Def of delta1}) of $\delta_{1}$ for the given $\bw$ one has 

\[ \delta_{1}\circ \iota (\bw)=
( c_{1, 1}, \ldots, c_{r, d_r} )^{\tr}.
\] 

\end{proof}

\begin{remark}

The shtuka function is defined for rank 1 Drinfeld modules and caries a great deal of arithmetic information (see \cite[\S 7.7-8.2]{T04}).  We view the matrix $\Phi'$ as a matrix analogue of the shtuka function and this motivates some of the constructions in this paper.  Indeed, the shtuka function for the Carlitz module is defined as $f=t-\theta \in A[t]$ and  if one sets $\fs=s_1=1$ ($r=1$), then $\Phi' = f(t)$.
Further, one can express the coefficients of the logarithm function associated to a rank 1 Drinfeld module in terms of the reciprocal of the shtuka function.  For the Carlitz module one has (due to Anderson, see \cite[Prop. 0.3.8]{T93})
\begin{equation}\label{Carlitzlog}
\log_C(z) = \sum_{i\geq 0} \frac{z^{q^i}}{f\twist \dots f\twisti}\Big|_{t=\theta}.
\end{equation}
Recalling that $\Theta = (\Phi\pinv)^\tr $, there is a natural comparison between our formula \eqref{logformula} and formula \eqref{Carlitzlog}.  There are several other arithmetic applications of the shtuka function (see \cite{GP16}, \cite{G17b} and \cite{ANT17}), and it would be interesting to study how they apply in this setting.

\end{remark}

\section{Anderson-Thakur series}\label{Section of t-motivic MZV's}
In this section we study Anderson-Thakur series, which are deformations of the MZV's in \eqref{MZVdef}.  We then establish explicit formulae for Anderson-Thakur series in terms of $t$-motivic CMSPL's.

\subsection{Definition of Anderson-Thakur series}
\subsubsection{Anderson-Thakur polynomials}\label{AT polynomials}
For any non-negative integer $n$, we let $H_{n}(t)\in A[t]$ be the Anderson-Thakur polynomial defined in~\cite{AT90, AT09}, but we follow the notation given in~\cite{C14, CPY14}.  Namely, we first let $x,y$ be two independent variables and put $G_{0}(y):=1$ and define the polynomials $G_{n}(y)\in \FF_{q}[t,y]$ for positive integers $n$ by 
\[G_{n}(y):=\prod_{i=1}^{n}(t^{q^{n}}-y^{q^{i}}). \]

Put $D_{0}:=1$ and $D_{i}:=\prod_{j=0}^{i-1}(\theta^{q^{i}}-\theta^{q^{j}})\in A$ for $i\in \N$. For any non-negative integer $n$, recall that the Carlitz factorial is defined by
\begin{equation}\label{E: Carlitz factorials}
\Gamma_{n+1}:=\prod_{i=0}^{\infty}D_{i}^{n_i}\in A 
\end{equation}
where the $n_i \in \Z_{\geq 0}$ are given by writing the base $q$-expansion $n=\sum_{i=0}^{\infty} n_{i}q^{i}$ for $0\leq n_i\leq q-1$. We then define the sequence of Anderson-Thakur polynomials~\cite{AT90} $H_{n}(t)\in A[t]$ by the following generating function identity,
\[ \left( 1-\sum_{i=0}^{\infty} \frac{G_{i}(\theta) }{ D_{i}|_{\theta=t} }  x^{q^{i}}\right)^{-1} =\sum_{n=0}^{\infty}  \frac{H_{n}(t) }{ \Gamma_{n+1}|_{\theta=t}}  x^{n}  .\]
For any  non-negative integer $d$, we denote by $A_{d,+}$ the set of elements of degree $d$ in $A_{+}$.  Anderson and Thakur showed in \cite{AT90} that for any positive integer $s$,
\begin{equation}\label{E:BoundATpolynomials}
 \| H_{s-1}(t)\|_{1} <|\theta|_{\infty}^{\frac{sq}{q-1}},   
\end{equation}
and that the interpolation formula holds for every $d \in  \ZZ_{\geq 0}$:
\begin{equation}\label{E:InterpolationAT}
\dfrac{H_{s-1}^{(d)}(\theta)}{L_{d}^{s}} =\Gamma_{s} \cdot \sum_{a\in A_{d,+}}\frac{1}{a^{s}}  .
\end{equation}

\subsubsection{The definition of Anderson-Thakur series}
 In \cite[\S 2.5]{AT09}, Anderson and Thakur construct deformation series for MZV's.  We use a similar construction here, but it differs from Anderson and Thakur's construction by a factor of powers of $\Omega$, and thus has slightly different evaluation properiets.  Due to the similarities, we still call it an \textit{Anderson-Thakur series}.  Recall the notation $\LL_{i}$ given in~\eqref{E: LLi}.

\begin{definition}\label{Def: motivicMZV} For any index $\fs=(s_{1},\ldots,s_{r})\in \NN^{r}$, we define the Anderson-Thakur series associated to $\fs$ by the following series 
\[ 
\zetaAmot(\fs):= \sum_{i_{1}>\cdots>i_{r}\geq 0} \dfrac{H_{s_{1}-1}^{(i_{1})}\cdots H_{s_{r}-1}^{(i_{r})} } {\LL_{i_{1}}^{s_{1}}\cdots \LL_{i_{r}}^{s_{r}} }  \in  \TT_{\theta}.
 \]
\end{definition}

Note that $\zetaAmot(\fs)$ may not be entire. For example, let $q=3$ and $\fs=(2)$. Then we have $H_1=1$ and so from the defining series, $\zetaAmot(2)$ has poles at $t=\theta^{q^{i}}$ for each $i\in \NN$.

We now fix a fundamental period $\tilde{\pi}$ of the Carlitz $\FF_{q}[t]$-module $\bC$, i.e., $\Ker \Exp_{\bC}=A \cdot \tilde{\pi}$. Put
\begin{equation}\label{D:OmegaFunc}
\Omega(t):=(-\theta)^{\frac{-q}{q-1}} \prod_{i=1}^{\infty} \biggl(
1-\frac{t}{\theta^{q^{i}}} \biggr)\in \power{\CC_{\infty}}{t},
\end{equation}

where $(-\theta)^{\frac{1}{q-1}}$ is a suitable choice of  $(q-1)$-st root of $-\theta$ satisfying $\frac{1}{\Omega(\theta)}=\tilde{\pi}$ (see~\cite{ABP04, AT09}). We note that $\Omega$ satisfies the functional equation
\[ \Omega^{(1)}=\Omega/(t-\theta^{q}), \]
and hence
\[ \Omega^{s_{1}+\cdots+s_{r}}\cdot \zetaAmot(\fs)=\sum_{i_{1}>\cdots>i_{r}\geq 0}(\Omega^{s_{r}}H_{s_{r}-1})^{(i_{r})}\cdots (\Omega^{s_{1}} H_{s_{1}-1})^{(i_{1})} ,  \]
which is an entire series studied in \cite{AT09}. 

\begin{remark}
 By the Anderson-Thakur's interpolation formula~(\ref{E:InterpolationAT}) we have that 
\begin{equation}\label{MMZV=MZV}
\zetaAmot(\fs)|_{t=\theta}=\Gamma_{\fs}\zeta_{A}(\fs),
\end{equation}
where $\Gamma_{\fs}:=\Gamma_{s_{1}}\cdots \Gamma_{s_{r}}$. Note that $\Omega^{s_{1}+\cdots+s_{r}} \zetaAmot(\fs)$ is an entire power series (see~\cite{AT09, CPY14}). Since $\Omega$ is entire on $\CC_{\infty}$ with simple zeros at $t=\theta^{q},\theta^{q^{2}},\cdots$, the series $\zetaAmot(\fs)$ lies in $\TT_{\theta}$ and hence $\partial_{t}^{j} \zetaAmot(\fs)\in \TT_{\theta}$ for every positive integer $j$. 
\end{remark}

\subsection{Explicit formulae for Anderson-Thakur series} For any index  $\fs=(s_{1},\ldots,s_{r})\in \NN^{r}$, combining Anderson-Thakur's work on the interpolation formula with \cite[Thm.~5.2.5]{CM17b} we can express $\zeta_{A}(\fs)$ as
\begin{equation}\label{E: FormulaMZV}
 \Gamma_{\fs}\zeta_{A}(\fs)=\sum_{\ell=1}^{T_\fs} b_{\ell}(\theta)\cdot (-1)^{\dep(\fs_{\ell})-1}\Lis_{\fs_{\ell}}(\bu_{\ell}), 
\end{equation}
for some number $T_\fs \in \NN$, explicit coefficients $b_{\ell}(t)\in \FF_{q}[t]$, explicit indexes  $\fs_{\ell}\in \NN^{\dep(\fs_{\ell})}$ with $\dep(\fs_{\ell})\leq \dep(\fs)$ and $\wt(\fs_{\ell})=\wt(\fs)$ and explicit integral points $\bu_{\ell}\in A^{\dep(\fs_{\ell})}$.

The aim of this section is to deform the identity (\ref{E: FormulaMZV}) to an identity of power series.
\begin{lemma}\label{L:Deformation identity}
Fix an index $\fs=(s_{1},\ldots,s_{r})\in \NN^{r}$. Let $(\fs_{\ell},\bu_{\ell},b_{\ell}(t))$ and $T_\fs$ be given in (\ref{E: FormulaMZV}). Then we have the following identity
\begin{equation}\label{E:FormulaMotivicMZV}
\zetaAmot(\fs)=\sum_{\ell=1}^{T_\fs} b_{\ell}(t)\cdot (-1)^{\dep(\fs_{\ell})-1}\fLis_{\fs_{\ell},\bu_{\ell}}  .
\end{equation}

\end{lemma}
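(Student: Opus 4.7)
The identity \eqref{E:FormulaMotivicMZV} is the natural $t$-deformation of \eqref{E: FormulaMZV}: both sides lie in $\TT_{\theta}$ (the left by the remark after Definition~\ref{Def: motivicMZV}, and the right because the integral points $\bu_{\ell}$ constructed in \cite{CM17b} are derived from the Anderson-Thakur polynomials and thus automatically satisfy the size condition \eqref{E: hypothesis on u} via the bound \eqref{E:BoundATpolynomials}), and specializing at $t=\theta$ recovers \eqref{E: FormulaMZV} thanks to \eqref{MMZV=MZV} and $\fLis_{\fs_{\ell},\bu_{\ell}}|_{t=\theta} = \Lis_{\fs_{\ell}}(\bu_{\ell})$. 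My plan is to revisit the derivation of \eqref{E: FormulaMZV} given in \cite[Thm.~5.2.5]{CM17b} and verify that every step can be carried out at the level of $\TT_{\theta}$, so that repeating the same derivation starting from $\zetaAmot(\fs)$ rather than $\Gamma_{\fs}\zeta_{A}(\fs)$ produces \eqref{E:FormulaMotivicMZV}.

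The derivation in \cite{CM17b} rests on two ingredients. The first is the Anderson-Thakur interpolation formula \eqref{E:InterpolationAT}, which identifies $H_{s-1}^{(d)}(\theta)/L_{d}^{s}$ with $\Gamma_{s}\sum_{a\in A_{d,+}} a^{-s}$; this is already built into the definition of $\zetaAmot(\fs)$, whose factors $H_{s_{j}-1}^{(i_{j})}/\LL_{i_{j}}^{s_{j}}$ are precisely the $t$-motivic lifts of $H_{s_{j}-1}^{(i_{j})}(\theta)/L_{i_{j}}^{s_{j}}$, and no extra work is required for this ingredient. The second is a combinatorial reorganization of iterated sums over $(i_{1},\ldots,i_{r})$ with strict inequalities into iterated sums with non-strict inequalities (converting $\Li$-type sums into $\Lis$-type sums), together with an expansion of the Anderson-Thakur polynomials into pieces that produce the integral points $\bu_{\ell}\in A^{\dep(\fs_{\ell})}$ and coefficients $b_{\ell}\in \FF_{q}[t]$. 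These manipulations are purely combinatorial, involving only the $\FF_{q}[t]$-algebra structure of $\TT_{\theta}$ and the index-set relation between $\fLi$ and $\fLis$, and they apply verbatim with $L_{i}$ replaced by $\LL_{i}$ throughout.

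The step that I expect to require the most care is the bookkeeping that ensures the coefficient polynomials $b_{\ell}(t) \in \FF_{q}[t]$ and the integral points $\bu_{\ell} \in A^{\dep(\fs_{\ell})}$ produced by the $t$-deformed derivation agree, after specialization at $t=\theta$, with the corresponding data in \eqref{E: FormulaMZV}. Since the $\bu_{\ell}$ are $t$-independent and each $b_{\ell}$ specializes to $b_{\ell}(\theta)$, this is in principle a direct check, but it must be carried out for every step of the iterative derivation to confirm that no additional $t$-dependence is introduced by the lifted manipulations. Once this matching is established, the deformed identity \eqref{E:FormulaMotivicMZV} follows from the parallel computation, with specialization at $t=\theta$ providing an independent consistency check via \eqref{E: FormulaMZV}.
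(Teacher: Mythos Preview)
Your proposal is correct and follows essentially the same approach as the paper: expand the Anderson--Thakur polynomials $H_{s_i-1}$ to express $\zetaAmot(\fs)$ as an $\FF_q[t]$-linear combination of $t$-motivic CMPL's $\fLi_{\fs,\bu_{\bj}}$, then apply inclusion--exclusion on the index set $\{i_1>\cdots>i_r\geq 0\}$ to rewrite each $\fLi$ in terms of $\fLis$'s, observing that every step is purely combinatorial and identical to the derivation of \eqref{E: FormulaMZV} with $\LL_i$ in place of $L_i$. The paper makes exactly this argument, referring back to its review of \eqref{E: FormulaMZV} and noting that the same coefficients $b_\ell(t)$ and points $\bu_\ell$ arise.
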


\begin{remark} We mention in this remark that each value $\Lis_{\fs_{\ell}}(\bu_{\ell})$ occurring in (\ref{E: FormulaMZV}) is non-vanishing  (when $\bu_{\ell} \in (A \setminus \{ 0 \})^{\dep(\fs_{\ell})}$). We simply argue as follows. For any index $\fs=(s_{1},\ldots,s_{r})\in \NN^{r}$, we put
\[ 
\mathbb{D}_{\fs}':=\left\{(z_{1},\ldots,z_{r})\in \CC_{\infty}^{r}; |z_{i}|_{\infty}< q^{\frac{s_{i}q}{q-1}} \hbox{ for }i=1,\ldots,r \right\}
\]
 and
\[
\mathbb{D}_{\fs}'':=\left\{(z_{1},\ldots,z_{r})\in \CC_{\infty}^{r}: |z_{1}|_{\infty} < q^{\frac{s_{1}q}{q-1}} \hbox{ and } |z_{i}|_{\infty} \leq q^{\frac{s_{i}q}{q-1}} \hbox{ for }i=2,\ldots,r \right\}.
\] Then for any $\bu=(u_{1},\ldots,u_{r})\in \mathbb{D}_{\fs}''$, by \cite[Rem.~5.1.4]{C14} the absolute value of the general terms of $\Lis_{\fs}(\bu)$ is given by
\[ \left |\frac{ u_1^{q^{i_1}}  \dots u_r^{q^{i_r}}}{ L_{i_1}^{s_1}  \dots L_{i_r}^{s_r}}   \right |_{\infty} = q^{\frac{q}{q-1}(s_1+\cdots+s_r)}\cdot 
\left |\frac{u_1}{\theta^{s_1 q/(q-1)}}\right |_{\infty}^{q^{i_1}} \cdots \left |\frac{u_r}{\theta^{s_r q/(q-1)}}\right |_{\infty}^{q^{i_r}}.\]
Therefore the absolute values above have a unique maximal one when $i_1=\cdots=i_r=0$ for any $\bu \in \mathbb{D}_{\fs}'' \cap (\CC_{\infty}^{\times })^{r}$.
Note that $\bu_{\ell} \in \mathbb{D}_{\fs_{\ell}}' \subset \mathbb{D}_{\fs_{\ell}}''$ by (\ref{E:BoundATpolynomials}) and \cite[Prop.~5.2.2 and Rmk.~5.2.6]{CM17b}, whence $\Lis_{\fs_{\ell}}(\bu_{\ell})\neq 0$. 
\end{remark}

\subsubsection{Formula of $\zetaAmot(\fs)$ in terms of $\fLi_{\fs,\bu}$}  We will give the proof of Lemma \ref{L:Deformation identity} shortly, but first we discuss some new ideas in order to connect $\zetaAmot(\fs)$ to $\fLi_{\fs,\bu}$.  Fix an index $\fs=(s_{1},\ldots,s_{r})\in \NN^{r}$. For each $1\leq i \leq r$, we expand the Anderson-Thakur polynomial $H_{s_{i}-1}(t)$ as 
\[H_{s_{i}-1}(t)=\sum_{j=0}^{n_{i}} u_{ij}t^{j}, \]
where $u_{ij}\in A$ with $|u_{ij}|_{\infty}<|\theta|_{\infty}^{\frac{s_{i}q}{q-1}} $ and $u_{in_{i}}\neq 0$. Following the notation of \cite{CM17b} we define
\[ J_{\fs}:=\left\{0,1,\ldots,n_{1} \right\}\times \cdots \times \left\{0,1,\ldots,n_{r} \right\}  .\]
For each $\bj=(j_{1},\ldots,j_{r})\in J_{\fs}$, we put 
\begin{equation}\label{budef}
\bu_{\bj}:=(u_{1 j_{1}},\ldots,u_{r j_{r}} )\in A^{r} \ {\textnormal{and}} \ a_{\bj}(t):=t^{j_{1}+\cdots+j_{r}}\in \FF_{q}[t].
\end{equation}
One then observes that
\[H_{s_{1}-1}^{(i_{1})}\cdots H_{s_{r}-1}^{(i_{r})}= \left( \sum_{j=0}^{n_{1}} u_{1 j}^{q^{i_{1}}}t^{j} \right) \cdots \left(\sum_{j=0}^{n_{r}} u_{r j}^{q^{i_{r}}}t^{j} \right)
=\sum_{\bj=(j_{1},\ldots,j_{r})\in J_{\fs}} a_{\bj}(t)u_{1j_{1}}^{q^{i_{1}}}\cdots u_{r j_{r}}^{q^{i_{r}}}.  \]
Dividing the above equality by $\LL_{i_{1}}^{s_{1}}\cdots \LL_{i_{r}}^{s_{r}}$ and then summing over all $i_{1}>\cdots i_{r}\geq 0$ we find the following identity from the definitions of $\zetaAmot(\fs)$ and $\fLi_{\fs,\bu}$.
\begin{proposition}\label{P:zetaCMPL}
Let $\fs=(s_{1},\ldots,s_{r})\in \NN^{r}$ and let $J_{\fs}$ be defined as above. Then we have the following identity
\begin{equation}\label{E:MMZV=MCMPL}
\zetaAmot(\fs)=\sum_{\bj\in J_{\fs}}a_{\bj}(t)\fLi_{\fs,\bu_{\bj}}.
\end{equation}
\end{proposition}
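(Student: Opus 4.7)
The strategy is essentially laid out in the paragraph preceding the proposition: substitute the explicit expansion of the Anderson--Thakur polynomials into the defining series for $\zetaAmot(\fs)$, distribute, and reorganize the sums to recognize $\fLi_{\fs,\bu_\bj}$ inside.

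More precisely, my plan is to start from the definition
\[
\zetaAmot(\fs) = \sum_{i_{1}>\cdots>i_{r}\geq 0} \frac{H_{s_{1}-1}^{(i_{1})}(t)\cdots H_{s_{r}-1}^{(i_{r})}(t)}{\LL_{i_{1}}^{s_{1}}\cdots \LL_{i_{r}}^{s_{r}}}
\]
and insert the twisted expansion $H_{s_i-1}^{(i_i)}(t) = \sum_{j=0}^{n_i} u_{ij}^{q^{i_i}} t^j$, which follows immediately from $H_{s_i-1}(t) = \sum_{j=0}^{n_i} u_{ij}t^j$ and the fact that Frobenius twisting acts only on coefficients. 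Multiplying the $r$ resulting finite sums and collecting terms by the multi-index $\bj = (j_1,\ldots,j_r) \in J_\fs$ yields exactly
\[
H_{s_{1}-1}^{(i_{1})}\cdots H_{s_{r}-1}^{(i_{r})} = \sum_{\bj \in J_\fs} a_\bj(t)\, u_{1j_1}^{q^{i_1}}\cdots u_{rj_r}^{q^{i_r}},
\]
with $a_\bj(t)$ and $\bu_\bj$ as in \eqref{budef}. Dividing by $\LL_{i_1}^{s_1}\cdots \LL_{i_r}^{s_r}$ and summing over $i_1 > \cdots > i_r \geq 0$ then gives the claim, provided we can pull the finite sum over $\bj \in J_\fs$ outside the infinite sum over the $i_i$'s.

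The only step that requires any justification is this interchange of summations. Since $J_\fs$ is a finite set (it is a product of $r$ finite index sets), the interchange is a matter of the absolute convergence in $\TT_\theta$ of each series $\fLi_{\fs,\bu_\bj}$. This is exactly what is noted just after the definition of $t$-motivic CMPL's: for each $\bj \in J_\fs$ the entries of $\bu_\bj$ lie in $A$ and satisfy $|u_{ij_i}|_{\infty} < |\theta|_{\infty}^{s_iq/(q-1)}$ by \eqref{E:BoundATpolynomials}, so $\bu_\bj$ satisfies the hypothesis \eqref{E: hypothesis on u} and $\fLi_{\fs,\bu_\bj}$ converges in $\TT_\theta$. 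Hence each term $a_\bj(t)\fLi_{\fs,\bu_\bj}$ is a well-defined element of $\TT_\theta$, and the interchange of the finite sum over $\bj$ with the infinite sum over the tuples $(i_1,\ldots,i_r)$ is legitimate.

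I expect no serious obstacle: the proof is a direct computation together with a convergence remark. The only mildly delicate point is keeping the notation for multi-indices straight, and confirming that the same bound on the Anderson--Thakur coefficients which makes $\zetaAmot(\fs)$ lie in $\TT_\theta$ also guarantees that each $\fLi_{\fs,\bu_\bj}$ does, so that both sides of the identity are genuine elements of the same ring.
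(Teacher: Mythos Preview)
Your proposal is correct and follows exactly the approach given in the paper: expand each twisted Anderson--Thakur polynomial, multiply out, collect over $\bj\in J_{\fs}$, divide by the $\LL$-product, and sum over $i_{1}>\cdots>i_{r}\geq 0$. The paper does not spell out the convergence justification for the interchange of sums, so your remark that $J_{\fs}$ is finite and each $\fLi_{\fs,\bu_{\bj}}\in\TT_{\theta}$ by \eqref{E:BoundATpolynomials} and \eqref{E: hypothesis on u} is a welcome addition.
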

\begin{remark}
When we specialize the both sides of (\ref{E:MMZV=MCMPL}) at $t=\theta$, then we obtain the identity 
\begin{equation}\label{E:MZV=CMPL}
\Gamma_{\fs}\zeta_{A}(\fs)=\sum_{\bj\in J_{\fs}}a_{\bj}(\theta)\Li_{\fs}(\bu_{\bj})
\end{equation}
 given in \cite[Thm.~5.5.2]{C14}. 
\end{remark}

\subsubsection{Review of the identity~\textnormal{(\ref{E: FormulaMZV})}}\label{Sub:ReviewFormulaMZV}

We first mention that the arguments of proving the identity (\ref{E:FormulaMotivicMZV}) are essentially the same as the arguments of deriving (\ref{E: FormulaMZV}), and so we quickly review the ideas how we derive (\ref{E: FormulaMZV}). As we have the formula~(\ref{E:MZV=CMPL}), it suffices to express the CMPL $\Li_{\fs}(\bu_{\bj})$ of the right hand side of (\ref{E:MZV=CMPL}) in terms of linear combination of $\Lis_{\fs_{\ell}}$ with coefficients $\pm 1$. It is easy to achieve this goal by using inclusion-exclusion principle on the set
\[ \left\{ i_{1}>\cdots>i_{r}\geq 0 \right\}. \]
Note that the coefficients $b_{\ell}(\theta)$ arise from some $a_{\bj}(\theta)$ up to $\pm 1$. 

\subsubsection{Proof of Lemma~\ref{L:Deformation identity}} Now we prove the identity (\ref{E:FormulaMotivicMZV}). First, we start with the identity (\ref{E:MMZV=MCMPL}).  We then use inclusion-exclusion principle on the set 
\[ \left\{ i_{1}>\cdots>i_{r}\geq 0 \right\}\]
to express the $\fLi_{\fs,\bu_{\bj}}$ of the right hand side of (\ref{E:MMZV=MCMPL}) as a linear combinations of  $\fLis_{\fs_{\ell},\bu_{\ell}}$ with coefficients $\pm 1$. Since such a procedure is completely the same as in Sec.~\ref{Sub:ReviewFormulaMZV} and since the coefficients of the right hand side of (\ref{E:MMZV=MCMPL}) and (\ref{E:MZV=CMPL}) are the same when replacing $t$ by $\theta$, by going through the details we obtain the desired formula (\ref{E:FormulaMotivicMZV}). 

\begin{remark}
The formula~(\ref{E:FormulaMotivicMZV}) will be used in the proof of Theorem~\ref{T:MainThm}. The key idea of the proof is that formula (\ref{E:FormulaMotivicMZV}) is the deformation of (\ref{E: FormulaMZV}).  In the proof of Theorem~\ref{T:MainThm} we do not need to know the precise coefficients $b_{\ell}$, so we avoid presenting the repetitive details given in \cite[Sec.~5.2]{CM17b}. One certainly could write down the precise coefficients $b_{\ell}$ by going through the procedure mentioned above.
\end{remark}

\section{Explicit formulae for $Z_{\fs}$}\label{S:ExplicitFormulae}
For a given index $\fs=(s_{1},\ldots,s_{r})\in\NN^{r}$, in~\cite{CM17b} the first and third authors explicitly construct a uniformizable $t$-module $G_{\fs}$ defined over $A$, a special point $\bv_{\fs}\in G_{\fs}(A)$ and a vector $Z_{\fs}\in\Lie G_{\fs}(\CC_{\infty})$ so that 
\begin{itemize}
\item $\Exp_{G_{\fs}}(Z_{\fs})=\bv_{\fs}$, and
\item the $d_{1}$-th coordinate of $Z_{\fs}$ gives $\Gamma_{\fs}\zeta_{A}(\fs)$, 
\end{itemize}
where we recall that $d_{i}:=s_{i}+\cdots+s_{r}$ for $i=1,\ldots,r$. The purpose of this section is to give explicit formulae for all coordinates of $Z_{\fs}$ in terms of hyperderivatives of Anderson-Thakur series and $t$-motivic CMSPL's. This is done in Theorem \ref{T:MainThm}.  We then also give examples of our main theorem, and apply it to monomials of MZV's.

\subsection{Review of the constructions of $G_{\fs},\bv_{\fs},Z_{\fs}$}\label{Sub: fiber coproduct M} In this subsection, for a fixed index $\fs=(s_{1},\ldots,s_{r})\in \NN^{r}$ we recall the constructions of the $t$-module $G_{\fs}$ defined over $A$, the special point $\bv_{\fs}\in G_{\fs}(A)$ and the vector $Z_{\fs}\in \Lie G_{\fs}(\CC_{\infty})$ given in~\cite{CM17b}. 
\subsubsection{Review of fiber coproducts of dual $t$-motives} Let $\KK\subset \CC_{\infty}$ be an algebraically closed subfield containing $K$. Fix a dual $t$-motive $\cN$, and let $\left\{ \cM_{1},\ldots,\cM_{T} \right\}$ be dual $t$-motives so that   $\cN \subseteq \cM_{i}$ is a left $\KK[t,\sigma]$-submodule   and the quotient $\cM_{i}/\cN$ is either zero or a dual $t$-motive for each $i$. Let $\bn$ be a $\KK[t]$-basis of $\cN$ and denote by $\bn_{i}$ the image of $\bn$ under the inclusion $\cN\hookrightarrow \cM_{i}$.  Under the assumptions on $\cM_{i}$, we note that for each $i$ the set $\bn_{i}$ is either a $\KK[t]$-basis of $\cM_{i}$ or can be extended to a $\KK[t]$-basis of $\cM_{i}$. 

We define $\cM$ to be the fiber coproduct of $\left\{ \cM_{i}\right\}_{i=1}^{T}$ over $\cN$ denoted by  $\cM_{1}\sqcup_{\cN}\cdots\sqcup_{\cN}\cM_{T} $. As a left $\KK[t]$-module, $\cM$ is defined by the following quotient module
\[ \cM:=\bigoplus_{i=1}^{T} \cM_{i}/ \left( \Span_{\KK[t]}\left\{ x_{i}'-x_{j}' | \ \forall \ x\in \bn \ \forall 1\leq i, j \leq T \right\} \right),\]
where $x_{i}'$ denotes the image of $x$ under the embedding $\cN\hookrightarrow \cM_{i}$ for $i=1,\ldots,T$. It is shown in \cite[Sec.~2.4.2]{CM17b} that the $\KK[t]$-module  $\left( \Span_{\KK[t]}\left\{ x_{i}'-x_{j}' | \ \forall \ x\in \bn \ \forall 1\leq i, j \leq T \right\} \right)$ is stable under the $\sigma$-action, and hence $\cM$ is a left $\KK[t,\sigma]$-module. In fact, $\cM$ is shown to be a dual $t$-motive.

\subsubsection{The set up} 
Recall the formula $\Gamma_{\fs}\zeta_{A}(\fs)=\sum_{\ell=1}^{T_\fs}  b_{\ell}(\theta)  \cdot (-1)^{\dep(\fs_{\ell})-1}\Lis_{\fs_{\ell}}(\bu_{\ell})$ given in (\ref{E: FormulaMZV}).  To simplify notation we fix $T = T_\fs$ and let $s$ be the cardinality of those triples $(\fs_{\ell},\bu_{\ell},b_{\ell}(\theta))$ with $\dep(\fs_{\ell})=1$.  We then renumber the indexes $\ell$ of $\left\{1,\ldots,T \right\}$ such that 
\begin{itemize}
\item $1\leq \ell \leq s$ if $\dep(\fs_{\ell})=1$, and
\item $s+1\leq \ell \leq T$ for $\dep(\fs_{\ell})\geq 2$,
\end{itemize}
where $\ell$ corresponds to the triple $(\fs_{\ell},\bu_{\ell},b_{\ell}(\theta))$.

Now for each $1\leq \ell \leq T$ we define matrices $\Phi_{\ell}\in \Mat_{\dep(\fs_{\ell})+1}(\KK[t])$ and $\Phi_{\ell}'\in \Mat_{\dep(\fs_{\ell})}(\KK[t])$ using the $\dep(\fs_{\ell})$-tuple  $\widetilde{\fs}_{\ell}$ (recall that $\widetilde \cdot$ reverses the order of the tuple) as in (\ref{E:Phi s}) and (\ref{E:Phi s'}), respectively, with $\mathfrak{Q} = \widetilde \bu_\ell$.  Further, define the Frobenius module $M_{\ell}$ and the dual $t$-motive $M_{\ell}'$ as in Sec.~\ref{Subsec: t-motives to t-modules} with sigma actions given by $\Phi_\ell$ and $\Phi_\ell'$, respectively. For each  $(\widetilde{\fs}_{\ell}, \widetilde{\bu}_{\ell})$, let $(G_{\ell},\rho_{\ell})$ be the $t$-module associated to $M_{\ell}'$, i.e., $\rho_{\ell}$ is given in (\ref{E:Explicit t-moduleCMPL}), and let $\bv_{\ell}$  be the special point of $G_{\ell}$ given in  (\ref{E:v_s,u}). Note that $\bv_{\ell}$ arises from $M_{\ell}$ by Remark~\ref{Rem: Ext1}.

Since each  $\widetilde{\bu}_{\ell}$ is an integral point, by Sec.~\ref{Sub:Gsu} we see that $G_{\ell}$ is defined over $A$ and $\bv_{\ell}\in G_{\ell}(A)$. Note further that by \cite[Thm.~4.2.3]{CM17b} the logarithm $\Log_{G_{\ell}}$ converges at the special point $\bv_{\ell}$, and the $d_{1}$-th coordinate of $\Log_{G_{\ell}}(\bv_{\ell})$ is equal to
\[ (-1)^{\dep(\fs_{\ell})-1} \Lis_{\fs_{\ell}} (\bu_{\ell}) ,\]
where $d_{1}:=\wt(\fs)=\wt(\fs_{\ell})$ for all $\ell$. Finally, we put $Z_{\ell}:=\Log_{G_{\ell}}(\bv_{\ell})$ for each $\ell$. We note that the above setting is the same as \cite[Sec.~5.3]{CM17b}.

\subsubsection{The $t$-module $G_{\fs}$} \label{Subsec: fiber coproduct}
 For each $\ell$, we let $\rho_{\ell}$ be the map defining the $t$-module structure on $G_{\ell}$. By (\ref{E:Explicit t-moduleCMPL}) we see that if $\ell\geq s+1$, $\rho_{\ell}(t)$ is  a right upper triangular block matrix with $[t]_{d_1}$ located as upper left square. So $\rho_{\ell}(t)$ has the shape
\[ 
\rho_\ell(t) = \begin{pmatrix}
[t]_{d_{1}}& F_{\ell}\\
& \rho_{\ell}(t)'
\end{pmatrix},
\]
where $[t]_{d_{1}}$ is defined in \eqref{[t]ndef} and where $F_\ell$ and $\rho_\ell'$ are matrices over $A[\tau]$ which one could calculate explicitly, but going forward we only need to know that $[t]_{d_1}$ is the top left block without knowing the precise sizes of $F_{\ell}$ and $\rho_{\ell}'$.  Note that if  $\ell \leq s$ then $\rho_{\ell}(t)=[t]_{d_{1}}$. We define the $t$-module $(G_{\fs},\rho)$ defined over $A$ to be the $t$-module associated to the dual $t$-motive
\[\cM_{\fs} =  M_{1}'\sqcup_{C^{\otimes d_{1}}}\cdots\sqcup_{C^{\otimes d_{1}}}M_{T}', \]
which is the fiber coproduct of the dual $t$-motives $\left\{ M_{\ell}'\right\}_{\ell=1}^{T}$ over $C^{\otimes d_{1}}$.  We claim that $\rho(t)$ is a block upper triangular matrix given by
\begin{equation}
\begin{pmatrix}\label{E: rho}
[t]_{d_{1}}&F_{s+1}&\cdots & F_{T}\\
& \rho_{s+1}(t)' & &\\
& & \ddots &\\
& & & \rho_{T}(t)'\\
\end{pmatrix},
\end{equation}
where again $F_i$ and $\rho_i'$ are certain matrices over $A[\tau]$ whose exact description we do not require going forward.  We will prove the above claim after giving a brief definition.
\begin{definition}\label{Def: z-}
For any vector $\bz\in \Mat_{n\times 1}(\CC_{\infty})$ with $n\geq d_{1}$, we define $\hat{\bz}$ as the vector of the first $d_{1}$ coordinates of $\bz$, and $\bz_{-}$ as the vector of the remaining $n-d_{1}$ coordinates, i.e.,
\[
\bz=\begin{pmatrix}
\hat{\bz}\\
{\bz}_{-}
\end{pmatrix}
\] for which $\hat{\bz}$ is of length $d_{1}$ and ${\bz}_{-}$ is of length $n - d_{1}$.
\end{definition}
To prove the claim, we note that from the construction of fiber coproducts of dual $t$-motives  there is a natural morphism of $t$-modules: 
\[ \pi:=\left(  \left(  \bz_{1}, \dots, \bz_{T} \right) \mapsto \left( \sum_{\ell=1}^{T} \hat{\bz}_{\ell}^{\tr},{\bz_{s+1}^{\tr}}_{-},\ldots, {\bz_{T}^{\tr}}_{-}\right)^{\tr}\right): \bigoplus_{\ell=1}^{T} G_{\ell} \rightarrow G_{\fs}. \]

 Note further that given a point $\bz \in G_{\fs}$, we can pick a suitable point $\bz_{\ell} \in G_{\ell}$ for each $1 \leq \ell \leq T$ so that $\hat{\bz}_{\ell} = \mathbf{0}$ for all $\ell \neq s+1$ and $\pi\left(  \bz_{1}, \dots, \bz_{T} \right) = \bz$.
Since $\pi$ is a morphism of $t$-modules, we have
\begin{align*}
\rho(t) (\bz)
&= \rho(t)\left( \pi\left( \bz_{1}, \dots, \bz_{T} \right)\right)
= \pi  \left(  \rho_{1}(t)(\bz_{1}), \dots,  \rho_{T}(t)(\bz_{T}) \right) \\
&= \pi \left( \mathbf{0}, \dots, \mathbf{0}, \begin{pmatrix} [t]_{d_{1}} \hat{\bz}_{s+1} + F_{s+1} {\bz_{s+1}}_{-} \\ \rho_{s+1}(t)' {\bz_{s+1}}_{-} \end{pmatrix}, \begin{pmatrix} F_{s+2} {\bz_{s+2}}_{-} \\ \rho_{s+2}(t)' {\bz_{s+2}}_{-} \end{pmatrix}, \ldots, \begin{pmatrix} F_{T} {\bz_{T}}_{-} \\ \rho_{T}(t)' {\bz_{T}}_{-} \end{pmatrix} \right) \\
&= \begin{pmatrix} [t]_{d_{1}} \hat{\bz}_{s+1} + {\displaystyle \sum_{\ell=s+1}^{T}} F_{\ell} {\bz_{\ell}}_{-} \\ \rho_{s+1}(t)' {\bz_{s+1}}_{-} \\ \vdots \\ \rho_{T}(t)' {\bz_{T}}_{-} \end{pmatrix}
= \begin{pmatrix}
[t]_{d_{1}}&F_{s+1}&\cdots & F_{T}\\
& \rho_{s+1}(t)' & &\\
& & \ddots &\\
& & & \rho_{T}(t)'\\
\end{pmatrix}
\begin{pmatrix} \hat{\bz}_{s+1} \\ {\bz_{s+1}}_{-} \\ \vdots \\ {\bz_{T}}_{-} \end{pmatrix} \\
&= \begin{pmatrix}
[t]_{d_{1}}&F_{s+1}&\cdots & F_{T}\\
& \rho_{s+1}(t)' & &\\
& & \ddots &\\
& & & \rho_{T}(t)'\\
\end{pmatrix}
\bz.
\end{align*}

Finally, we mention that the special point $\bv_{\fs}\in G_{\fs}(A)$  and the vector $Z_{\fs} \in \Lie G_{\fs}(\CC_{\infty})$ in \cite{CM17b} are defined by

\begin{equation}\label{E: Def of vs}
\bv_{\fs}:=\pi\left( (\rho_{1}(b_{1}(t))(\bv_{1}), \dots, \rho_{T}(b_{T}(t))(\bv_{T}))\right)
\end{equation}
and
\begin{equation}\label{E: Def of Zs}
Z_{\fs} := \partial \pi \left( (\partial \rho_{1}(b_{1}(t)) Z_{1},\dots,\partial \rho_{T}(b_{T}(t)) Z_{T}) \right), 
\end{equation}
and one has the fact \cite[Thm.~5.1.3]{CM17b} that
\[\Exp_{G_{\fs}}(Z_{\fs})=\bv_{\fs} .\]

\subsection{The main result}
 The primary result of this paper is stated as follows.

\begin{theorem}\label{T:MainThm} For any index $\fs=(s_{1},\ldots,s_{r})\in \NN^{r}$, we  let $d_{1}:=s_{1}+\cdots+s_{r}$ and let $Z_{\fs}\in \Lie G_{\fs}(\CC_{\infty})$ be the vector given as above. For each $s+1\leq \ell \leq T$, we  set
\[
\bY_{\ell}:=\bY_{\tilde{\fs}_{\ell},\tilde{\bu}_{\ell}},
\]which is defined in Theorem~\ref{T:Logcoords}. Then $Z_{\fs}$ has the following explicit formula 
\[
Z_{\fs}=\begin{pmatrix}
\left(\partial_{t}^{d_{1}-1}\zetaAmot(\fs)\right)|_{t=\theta} \\
\vdots\\
\left(\partial_{t}^{1}\zetaAmot(\fs)\right)|_{t=\theta}\\
\zetaAmot(\fs)|_{t=\theta}\\
\left(\partial \rho_{s+1}(b_{s+1})(\bY_{s+1})\right)_{-}\\
\vdots\\
\left( \partial \rho_{T}(b_{T}) (\bY_{T})\right)_{-}
\end{pmatrix}
=
\begin{pmatrix}
\left(\partial_{t}^{d_{1}-1}\zetaAmot(\fs)\right)|_{t=\theta} \\
\vdots\\
\left(\partial_{t}^{1}\zetaAmot(\fs)\right)|_{t=\theta}\\
\Gamma_{\fs}\zeta_{A}(\fs)\\
\left(\partial \rho_{s+1}(b_{s+1})(\bY_{s+1})\right)_{-}\\
\vdots\\
\left( \partial \rho_{T}(b_{T}) (\bY_{T})\right)_{-}
\end{pmatrix},\]
where $b_{\ell}\in \FF_{q}[t]$ is given in (\ref{E:FormulaMotivicMZV}) and $\partial \rho_{\ell}(b_{\ell})\left( \bY_{\ell}\right)$ is explicitly given in Corollary~\ref{C: partial bY} for each $s+1\leq \ell \leq T$, and the notation $(\cdot)_{-}$ is defined in Definition~\ref{Def: z-}.

\end{theorem}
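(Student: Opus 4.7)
My plan is to unwind the definition of $Z_{\fs}$ layer by layer and match the pieces to the stated formula, using Theorem~\ref{T:Logcoords}, Corollary~\ref{C: partial bY}, and Lemma~\ref{L:Deformation identity} as the three main inputs. First, I would recall from \eqref{E: Def of Zs} that
\[
Z_{\fs} = \partial \pi \bigl((\partial \rho_{1}(b_{1}) Z_{1}, \ldots, \partial \rho_{T}(b_{T}) Z_{T})\bigr),
\]
and that by Theorem~\ref{T:Logcoords} we have $Z_{\ell} = \Log_{G_{\ell}}(\bv_{\ell}) = \bY_{\tilde{\fs}_{\ell}, \tilde{\bu}_{\ell}} = \bY_{\ell}$. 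Differentiating the description of $\pi$ given in Section~\ref{Subsec: fiber coproduct}, the map $\partial \pi$ sends a tuple $(\bz_{1}, \ldots, \bz_{T})$ to the vector whose first $d_{1}$ coordinates are $\sum_{\ell=1}^{T} \hat{\bz}_{\ell}$ and whose remaining blocks are $(\bz_{s+1})_{-}, \ldots, (\bz_{T})_{-}$ (noting that $(\bz_{\ell})_{-} = 0$ for $1 \leq \ell \leq s$ since $\dep(\fs_{\ell}) = 1$ forces $G_{\ell} = \bC^{\otimes d_{1}}$). Thus the bottom blocks of $Z_{\fs}$ are tautologically $(\partial \rho_{\ell}(b_{\ell}) \bY_{\ell})_{-}$ for $s+1 \leq \ell \leq T$, matching the claimed form.

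It remains to identify the top $d_{1}$-block, namely $\sum_{\ell=1}^{T} \widehat{\partial \rho_{\ell}(b_{\ell}) \bY_{\ell}}$, with the hyperderivative vector of $\zetaAmot(\fs)$ at $t = \theta$. For this I would invoke Corollary~\ref{C: partial bY}: the first diagonal block of $\partial \rho_{\ell}(b_{\ell})$ is $\partial \bC^{\otimes d_{1}}(b_{\ell})$, and applying it to the top $d_{1}$-part of $\bY_{\ell}$ gives
\[
\widehat{\partial \rho_{\ell}(b_{\ell}) \bY_{\ell}} = (-1)^{\dep(\fs_{\ell})-1} \bigl( \partial_{t}^{d_{1}-1}(b_{\ell} \, \fLis_{\fs_{\ell}, \bu_{\ell}}), \ldots, \partial_{t}^{1}(b_{\ell} \, \fLis_{\fs_{\ell}, \bu_{\ell}}), b_{\ell} \fLis_{\fs_{\ell}, \bu_{\ell}} \bigr)^{\tr}\big|_{t=\theta}.
\]
Summing over $\ell$ and using the $\CC_{\infty}$-linearity of each operator $\partial_{t}^{j}$ together with the evaluation map at $t=\theta$, the $j$-th entry from the bottom becomes
\[
\partial_{t}^{j}\!\left( \sum_{\ell=1}^{T} (-1)^{\dep(\fs_{\ell})-1} b_{\ell}(t) \, \fLis_{\fs_{\ell}, \bu_{\ell}} \right)\bigg|_{t=\theta}.
\]

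Finally, I would apply Lemma~\ref{L:Deformation identity}, which identifies the bracketed sum with $\zetaAmot(\fs)$; this converts the above expression into $\partial_{t}^{j}\zetaAmot(\fs)|_{t=\theta}$ for $j=0, 1, \ldots, d_{1}-1$, giving the top block. The constant term $a_{\fs,0}$ equals $\zetaAmot(\fs)|_{t=\theta} = \Gamma_{\fs}\zeta_{A}(\fs)$ by \eqref{MMZV=MZV}. This assembles the stated formula for $Z_{\fs}$. The only real subtlety, and the step most worth double-checking, is the bookkeeping identifying $\widehat{\partial \rho_{\ell}(b_{\ell}) \bY_{\ell}}$ with the $d_{1}$-dimensional piece from the $i=1$ case of Theorem~\ref{T:Logcoords} applied to $(\tilde{\fs}_{\ell}, \tilde{\bu}_{\ell})$ (so that $\Lis_{\fs_{\ell}}(\bu_{\ell})$ appears with sign $(-1)^{\dep(\fs_{\ell})-1}$); once this orientation/sign match is verified, the remaining assembly is routine linear algebra combined with the two preparatory results.
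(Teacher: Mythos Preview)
Your proposal is correct and follows essentially the same approach as the paper's proof: unwind the definition \eqref{E: Def of Zs} via $\partial\pi$, identify $Z_{\ell}=\bY_{\ell}$ by Theorem~\ref{T:Logcoords}, use Corollary~\ref{C: partial bY} (equivalently Proposition~\ref{P:dmatfacts}(2) with Proposition~\ref{P: formula of partial rho}) to compute the top $d_{1}$-block of each $\partial\rho_{\ell}(b_{\ell})\bY_{\ell}$, and then sum using Lemma~\ref{L:Deformation identity}. The sign/orientation check you flag is exactly the content of the $i=1$ case of Theorem~\ref{T:Logcoords} applied to $(\tilde{\fs}_{\ell},\tilde{\bu}_{\ell})$, and your reading of it is correct; one cosmetic point is that for $1\leq\ell\leq s$ the vector $(\bz_{\ell})_{-}$ is empty rather than zero, since $G_{\ell}=\bC^{\otimes d_{1}}$ has dimension exactly $d_{1}$.
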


\begin{proof}
 Note that by Proposition~\ref{P: formula of partial rho}, $\partial \rho_{\ell}( b_{\ell})$ is given explicitly as diagonal block matrices, and the first block matrix is given by
\begin{equation}\label{E:FirstBlock}
\begin{pmatrix}
b_{\ell}(\theta)& \left(\partial_{t}^{1}b_{\ell} \right)|_{t=\theta}&\cdots&\left(\partial_{t}^{d_{1}-1}b_{\ell} \right)|_{t=\theta} \\
&\ddots& \ddots& \vdots\\
&&\ddots&  \left(\partial_{t}^{1}b_{\ell} \right)|_{t=\theta} \\
&&& b_{\ell}(\theta)
\end{pmatrix}.
\end{equation}Recall by Theorem~\ref{T:Logcoords} that $Z_{\ell}=\begin{pmatrix}\hat{Z}_{\ell}\\{Z_{\ell}}_{-} \end{pmatrix}$, where 
\[ \hat{Z}_{\ell}=\begin{pmatrix}
(-1)^{\dep(\fs_{\ell})-1 }\left(\partial_{t}^{d_{1}-1} \fLis_{\fs_{\ell},\bu_{\ell}}\right)|_{t=\theta} \\
\vdots\\
(-1)^{\dep(\fs_{\ell})-1 }\left(\partial_{t}^{1} \fLis_{\fs_{\ell},\bu_{\ell}}\right)|_{t=\theta} \\
(-1)^{\dep(\fs_{\ell})-1 } \left(\fLis_{\fs_{\ell},\bu_{\ell}}\right)|_{t=\theta} 
\end{pmatrix}= \begin{pmatrix}
(-1)^{\dep(\fs_{\ell})-1 }\left(\partial_{t}^{d_{1}-1} \fLis_{\fs_{\ell},\bu_{\ell}}\right)|_{t=\theta} \\
\vdots\\
(-1)^{\dep(\fs_{\ell})-1 }\left(\partial_{t}^{1} \fLis_{\fs_{\ell},\bu_{\ell}}\right)|_{t=\theta} \\
(-1)^{\dep(\fs_{\ell})-1 } \fLis_{\fs_{\ell}}(\bu_{\ell})
\end{pmatrix}. \]

Since $Z_{\fs} := \partial \pi \left( \partial \rho_{1}(b_{1}(t)) Z_{1},\dots,\partial \rho_{T}(b_{T}(t)) Z_{T} \right)$, by the definition of $\pi$  we have
\begin{equation}\label{E: Zs in proof}
 Z_{\fs}= \left( \sum_{\ell=1}^{T} \widehat{\partial \rho_{\ell}(b_{\ell}) ({Z}_{\ell}})^{\tr}, \left({\partial\rho_{s+1}(b_{s+1})(Z_{s+1})^{\tr}}\right)_{-},\ldots, \left( {\partial\rho_{T}(b_{T})(Z_{T}) ^{\tr}}\right)_{-}\right)^{\tr} . 
 \end{equation}Recall that $\partial \rho_{\ell}(b_{\ell})$ is a diagonal block matrix with the first block given as (\ref{E:FirstBlock}), whence we have that
\[
\begin{array}{rl}
&\sum_{\ell=1}^{T} \widehat{\partial \rho_{\ell}(b_{\ell}) ({Z}_{\ell}})\\
&\\
=& \sum_{\ell=1}^{T} \begin{pmatrix}
b_{\ell}(\theta)& \left(\partial_{t}^{1}b_{\ell} \right)|_{t=\theta}&\cdots& \left(\partial_{t}^{d_{1}-1}b_{\ell} \right)|_{t=\theta} \\
&\ddots& \ddots& \vdots\\
&&\ddots&  \left(\partial_{t}^{1}b_{\ell} \right)|_{t=\theta} \\
&&& b_{\ell}(\theta)
\end{pmatrix} 
\begin{pmatrix}
(-1)^{\dep(\fs_{\ell})-1}\cdot \left( \partial_{t}^{d_{1}-1}\fLis_{\fs_{\ell},\bu_{\ell}}(t)\right)|_{t=\theta}\\
(-1)^{\dep(\fs_{\ell})-1}\cdot  \left( \partial_{t}^{d_{1}-2}\fLis_{\fs_{\ell},\bu_{\ell}}(t)\right)|_{t=\theta}\\
\vdots\\
(-1)^{\dep(\fs_{\ell})-1}\cdot \fLis_{\fs_{\ell},\bu_{\ell}}(t)|_{t=\theta}
\end{pmatrix}\\
& \\
=& 
\begin{pmatrix}
\sum_{\ell=1}^{T}\partial_{t}^{d_{1}-1} \left( (-1)^{\dep(\fs_{\ell})-1} \cdot b_{\ell}(t) \fLis_{\fs_{\ell},\bu_{\ell}}(t) \right)|_{t=\theta}\\
\sum_{\ell=1}^{T}\partial_{t}^{d_{1}-2} \left( (-1)^{\dep(\fs_{\ell})-1} \cdot b_{\ell}(t) \fLis_{\fs_{\ell},\bu_{\ell}}(t) \right)|_{t=\theta} \\
\vdots\\
\sum_{\ell=1}^{T} (-1)^{\dep(\fs_{\ell})-1} \cdot b_{\ell}(t) \fLis_{\fs_{\ell},\bu_{\ell}}(t)|_{t=\theta}
\end{pmatrix}\\
& \\
=& \begin{pmatrix}
\left(\partial_{t}^{d_{1}-1}\zetaAmot(\fs)\right)|_{t=\theta} \\
\vdots\\
\left(\partial_{t}^{1}\zetaAmot(\fs)\right)|_{t=\theta}\\
\zetaAmot(\fs)|_{t=\theta}
\end{pmatrix}= \begin{pmatrix}
\left(\partial_{t}^{d_{1}-1}\zetaAmot(\fs)\right)|_{t=\theta} \\
\vdots\\
\left(\partial_{t}^{1}\zetaAmot(\fs)\right)|_{t=\theta}\\
 \Gamma_{\fs}\zeta_{A}(\fs)
\end{pmatrix},
\end{array}
\]where the second equality comes from Proposition \ref{P:dmatfacts} (2) and the third equality arises from linearity of hyperderivatives and (\ref{E:FormulaMotivicMZV}).

Since by definition $G_{\ell}=G_{\tilde{\fs}_{\ell},\tilde{\bu}_{\ell}}$, $\bv_{\ell}=\bv_{\tilde{\fs}_{\ell},\tilde{\bu}_{\ell}}$ and $Z_{\ell}=\Log_{G_{\ell}}(\bv_{\ell})$,  we have $Z_{\ell}=\bY_{\ell}:=\bY_{\tilde{\fs}_{\ell},\tilde{\bu}_{\ell}}$ defined in Theorem~\ref{T:Logcoords} and so the explicit formulae of the remaining coordinates of $Z_{\fs}$ follow from (\ref{E: Zs in proof}) and Corollary~\ref{C: partial bY}.
\end{proof}

\begin{example}
Take $q = 2$ and $\fs = (1,3)$. The fourth coordinate of $Z_{(1,3)}$ is given in \cite[Example 5.4.2]{CM17b}; however we can give the other coordinates explicitly here. In this case, we have $\Gamma_{1} = 1$, $\Gamma_{3} = \theta^{2} + \theta$, $H_{1-1} = 1$, $H_{3-1} = t + \theta^{2}$, $J_{(1,3)} = \{ (0,0), (0,1) \}$, $\bu_{(0,0)} = (1,\theta^{2})$, $\bu_{(0,1)} = (1,1)$, $a_{(0,0)} = 1$, $a_{(0,1)} = t$.
Thus we have
\begin{eqnarray*}
(\theta^{2} + \theta) \zeta_{A}(1,3)
\!\!\! & = & \!\!\! \Li_{(1,3)}(1,\theta^{2}) + \theta \Li_{(1,3)}(1,1) \\
\!\!\! & = & \!\!\! \Lis_{(1,3)}(1,\theta^{2}) - \Lis_{4}(\theta^{2}) + \theta \Lis_{(1,3)}(1,1) - \theta \Lis_{4}(1) \\
\!\!\! & = & \!\!\! (-1)^{1-1} \Lis_{4}(\theta^{2}) + \theta \cdot (-1)^{1-1} \Lis_{4}(1) \\
& & + (-1)^{2-1} \Lis_{(1,3)}(1,\theta^{2}) + \theta \cdot (-1)^{2-1} \Lis_{(1,3)}(1,1), \\
\zetaAmot(1,3)
\!\!\! & = & \!\!\! (-1)^{1-1} \fLis_{4, \theta^{2}}(t) + t \cdot (-1)^{1-1} \fLis_{4, 1}(t) \\
& & + (-1)^{2-1} \fLis_{(1,3), (1,\theta^{2})}(t) + t \cdot (-1)^{2-1} \fLis_{(1,3), (1,1)}(t), \\
\end{eqnarray*}
and $(b_{1}(t), \fs_{1}, \bu_{1}) = (1, 4, \theta^{2})$, $(b_{2}(t), \fs_{2}, \bu_{2}) = (t, 4, 1)$, $(b_{3}(t), \fs_{3}, \bu_{3}) = (1, (1,3), (1,\theta^{2}))$, $(b_{4}(t), \fs_{4}, \bu_{4}) =  (t, (1,3), (1,1)) $.

For $\ell = 1$, we have $G_{1} = \bC^{\otimes 4}$,
and points
\[ \bv_{1} = (0, 0, 0, \theta^{2})^{\tr} \in \bC^{\otimes 4}(A), \]
\[ Z_{1} = \left( \left( \partial_{t}^{3} \fLis_{4, \theta^{2}} \right)|_{t=\theta}, \left( \partial_{t}^{2} \fLis_{4, \theta^{2}} \right)|_{t=\theta}, \left( \partial_{t}^{1} \fLis_{4, \theta^{2}} \right)|_{t=\theta}, \Lis_{4}(\theta^{2}))^{\tr} \in \Lie \bC^{\otimes 4}(\CC_{\infty} \right). \]
For $\ell = 2$, we have $G_{2} = \bC^{\otimes 4}$,
and points
\[ \bv_{2} = (0, 0, 0, 1)^{\tr} \in \bC^{\otimes 4}(A), \]
\[ Z_{2} = \left( \left( \partial_{t}^{3} \fLis_{4, 1} \right)|_{t=\theta}, \left( \partial_{t}^{2} \fLis_{4, 1} \right)|_{t=\theta}, \left( \partial_{t}^{1} \fLis_{4, 1} \right)|_{t=\theta}, \Lis_{4}(1) \right)^{\tr} \in \Lie \bC^{\otimes 4}(\CC_{\infty}). \]
We also have
\[ \rho_{2}(t)(\bv_{2}) = [t]_{4} \bv_{2} = (0, 0, 1, \theta)^{\tr} \in \bC^{\otimes 4}(A), \]
\[ \partial \rho_{2}(t)Z_{2} = \left( \left( \partial_{t}^{3} t \fLis_{4, 1} \right)|_{t=\theta}, \left( \partial_{t}^{2} t \fLis_{4, 1} \right)|_{t=\theta}, \left( \partial_{t}^{1} t \fLis_{4, 1} \right)|_{t=\theta}, \theta \Lis_{4}(1) \right)^{\tr} \in \Lie \bC^{\otimes 4}(\CC_{\infty}). \]
For $\ell = 3$, we have $G_{3} = \GG_{a}^{5}$ with the $t$-action
\[
\rho_{3}(t) = \left( \begin{array}{cccc|c} \theta & 1 & & & \\ & \theta & 1 & & \\ & & \theta & 1 & \\ \tau & & & \theta & - \theta^{2} \tau \\ \hline & & & & \theta + \tau \end{array} \right),
\]
and points
\[ \bv_{3} = (0, 0, 0, - \theta^{2}, 1)^{\tr} \in G_{3}(A), \]
\[ Z_{3}
= \begin{pmatrix}
- \left( \partial_{t}^{3} \fLis_{(1,3), (1,\theta^2)} \right)|_{t=\theta} \\
- \left( \partial_{t}^{2} \fLis_{(1,3), (1,\theta^2)} \right)|_{t=\theta} \\
- \left( \partial_{t}^{1} \fLis_{(1,3), (1,\theta^2)} \right)|_{t=\theta} \\
- \Lis_{(1,3)}(1,\theta^{2}) \\
\Lis_{1}(1) \end{pmatrix} \in \Lie G_{3}(\CC_{\infty}). \]
For $\ell = 4$, we have $G_{4} = \GG_{a}^{5}$ with the $t$-action
\[
\rho_{4}(t) = \left( \begin{array}{cccc|c} \theta & 1 & & & \\ & \theta & 1 & & \\ & & \theta & 1 & \\ \tau & & & \theta & - \tau \\ \hline & & & & \theta + \tau \end{array} \right),
\]
and 
\[ \bv_{4} = (0, 0, 0, - 1, 1)^{\tr} \in G_{4}(A), \]
\[ Z_{4}
= \begin{pmatrix}
- \left( \partial_{t}^{3} \fLis_{(1,3), (1,1)} \right)|_{t=\theta} \\
- \left( \partial_{t}^{2} \fLis_{(1,3), (1,1)} \right)|_{t=\theta} \\
- \left( \partial_{t}^{1} \fLis_{(1,3), (1,1)} \right)|_{t=\theta} \\
- \Lis_{(1,3)}(1,1) \\
\Lis_{1}(1)
\end{pmatrix} \in \Lie G_{4}(\CC_{\infty}). \]
We also have
\[ \rho_{4}(t) (\bv_{4}) = (0, 0, 1, \theta+1, \theta+1)^{\tr} \in G_{4}(A), \]
\[ \partial \rho_{4}(t) Z_{4}
= \begin{pmatrix}
- \left( \partial_{t}^{3} t \fLis_{(1,3), (1,1)} \right)|_{t=\theta} \\
- \left( \partial_{t}^{2} t \fLis_{(1,3), (1,1)} \right)|_{t=\theta} \\
- \left( \partial_{t}^{1} t \fLis_{(1,3), (1,1)} \right)|_{t=\theta} \\
- \theta \Lis_{(1,3)}(1,1) \\
\theta \Lis_{1}(1)
\end{pmatrix} \in \Lie G_{4}(\CC_{\infty}). \]
Therefore we have $G_{(1,3)} = \GG_{a}^{6}$ with the $t$-action
\[
\rho(t) = \left( \begin{array}{cccc|c|c}
\theta & 1 & & & & \\
& \theta & 1 & & & \\
& & \theta & 1 & & \\
\tau & & & \theta & - \theta^{2} \tau & -\tau \\ \hline
& & & & \theta + \tau & \\ \hline
& & & & & \theta + \tau
\end{array} \right),
\]
and 
\[ \bv_{(1,3)} = \pi(\bv_{1}, \rho_{2}(t)( \bv_{2}) , \bv_{3}, \rho_{4}(t) (\bv_{4})) = (0, 0, 0, 1, 1, \theta+1)^{\tr} \in G_{(1,3)}(A), \]
\[ Z_{(1,3)}
= \begin{pmatrix}
\left( \partial_{t}^{3} \zetaAmot(1,3) \right)|_{t=\theta} \\
\left( \partial_{t}^{2} \zetaAmot(1,3) \right)|_{t=\theta} \\
\left( \partial_{t}^{1} \zetaAmot(1,3) \right)|_{t=\theta} \\
(\theta^{2} + \theta) \zeta_{A}(1,3) \\
\Lis_{1}(1) \\
\theta \Lis_{1}(1)
\end{pmatrix} \in \Lie G_{(1,3)}(\CC_{\infty}). \]
\end{example}

\begin{corollary}\label{C: partial cZ}
Let notation and hypotheses be given in Theorem~\ref{T:MainThm}. Then for any polynomial $c(t)\in \FF_{q}[t]$, we have
\[  
\partial \rho(c(t)) Z_{\fs}=
\begin{pmatrix}
\left(\partial_{t}^{d_{1}-1}c(t) \zetaAmot(\fs)\right)|_{t=\theta} \\
\vdots\\
\left(\partial_{t}^{1} c(t)\zetaAmot(\fs)\right)|_{t=\theta}\\
c(t)\zetaAmot(\fs)|_{t=\theta}\\
\left(\partial \rho_{s+1}(c b_{s+1})(\bY_{s+1})\right)_{-}\\
\vdots\\
\left( \partial \rho_{T}(c b_{T}) (\bY_{T})\right)_{-}
\end{pmatrix}
=
\begin{pmatrix}
\left(\partial_{t}^{d_{1}-1} c(t)\zetaAmot(\fs)\right)|_{t=\theta} \\
\vdots\\
\left(\partial_{t}^{1}c(t)\zetaAmot(\fs)\right)|_{t=\theta}\\
c(\theta)\Gamma_{\fs}\zeta_{A}(\fs)\\
\left(\partial \rho_{s+1}(c b_{s+1})(\bY_{s+1})\right)_{-}\\
\vdots\\
\left( \partial \rho_{T}(c b_{T}) (\bY_{T})\right)_{-}
\end{pmatrix},\]where 
$\partial \rho_{\ell}(c b_{\ell})(\bY_{\ell}) $ is explicitly given in Corollary~\ref{C: partial bY} for $s+1\leq \ell \leq T$.
\end{corollary}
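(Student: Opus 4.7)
The plan is to reduce this to the argument already carried out in the proof of Theorem~\ref{T:MainThm}, with $b_\ell$ replaced by $c \cdot b_\ell$. The starting point is the functoriality of differentials: because $\pi\colon \bigoplus_{\ell=1}^T G_\ell \to G_{\fs}$ is a morphism of $t$-modules, one has the identity $\partial \rho(c(t)) \circ \partial \pi = \partial \pi \circ \bigoplus_{\ell=1}^T \partial \rho_\ell(c(t))$. Applying $\partial \rho(c(t))$ to the defining formula \eqref{E: Def of Zs} for $Z_{\fs}$ and using that each $\partial \rho_\ell$ is a ring homomorphism, we obtain
\[
\partial \rho(c(t)) Z_{\fs} = \partial \pi \bigl( \partial \rho_1(c b_1) Z_1, \dots, \partial \rho_T(c b_T) Z_T \bigr).
\]
In other words, $\partial \rho(c(t)) Z_{\fs}$ has exactly the same shape as $Z_{\fs}$ but with the polynomial $b_\ell \in \FF_q[t]$ replaced everywhere by $c(t) b_\ell(t)$.

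Next, I would unpack the definition of $\pi$ exactly as in the proof of Theorem~\ref{T:MainThm}. The last $(T-s)$ blocks of coordinates are by definition $\left(\partial \rho_\ell(c b_\ell)(Z_\ell)\right)_{-} = \left(\partial \rho_\ell(c b_\ell)(\bY_\ell)\right)_{-}$ for $s+1 \leq \ell \leq T$, since $Z_\ell = \bY_\ell$ by Theorem~\ref{T:Logcoords}; these match the corresponding entries claimed in the statement, and Corollary~\ref{C: partial bY} applied with $c b_\ell$ in place of $b_\ell$ gives the promised explicit description.

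For the top block of $d_1$ coordinates, the computation from the proof of Theorem~\ref{T:MainThm} carries over verbatim: Proposition~\ref{P: formula of partial rho} identifies the first block of $\partial \rho_\ell(c b_\ell)$ with the $d$-matrix $d_t^{d_1}[c(t) b_\ell(t)]|_{t=\theta}$, and Proposition~\ref{P:dmatfacts}(2) converts the sum $\sum_\ell d_t^{d_1}[c b_\ell]|_{t=\theta}\cdot \widehat{Z}_\ell$ into the vector whose $k$-th entry ($k=1,\dots,d_1$) is
\[
\left.\partial_t^{d_1-k}\!\left( c(t) \sum_{\ell=1}^T (-1)^{\dep(\fs_\ell)-1} b_\ell(t)\, \fLis_{\fs_\ell,\bu_\ell}(t) \right)\right|_{t=\theta}.
\]
By Lemma~\ref{L:Deformation identity}, the inner parenthesized expression equals $c(t)\zetaAmot(\fs)$, yielding the asserted top block, whose bottom entry specializes to $c(\theta)\Gamma_{\fs}\zeta_A(\fs)$ by \eqref{MMZV=MZV}.

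I do not foresee a substantive obstacle: the entire content is bookkeeping that repackages the proof of Theorem~\ref{T:MainThm}. The only mild point requiring attention is confirming that replacing $b_\ell$ by $c b_\ell$ throughout does not disturb the fiber-coproduct identification used to define $\pi$ and $\partial\pi$, which is immediate because $\partial \pi$ is $\FF_q[t]$-linear and the relations defining $\cM_{\fs}$ are respected by multiplication by $c(t)$.
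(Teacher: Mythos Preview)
Your proposal is correct and takes essentially the same approach as the paper, which simply states that ``the arguments are entirely the same as above and so we omit them.'' You have in fact supplied more detail than the paper does, correctly identifying that the commutation $\partial\rho(c)\circ\partial\pi = \partial\pi\circ\bigoplus_\ell\partial\rho_\ell(c)$ reduces everything to the computation in Theorem~\ref{T:MainThm} with $b_\ell$ replaced by $c\,b_\ell$.
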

\begin{proof}
The arguments are entirely the same as above and so we omit them.
\end{proof}

\subsection{Monomials of MZV's} Given any two MZV's with weight $n_1$ and $n_2$, Thakur showed in \cite{T10} that the product of these two MZV's is an $\FF_p$-linear combination of certain MZV's of weight $n_{1}+n_{2}$, where $\FF_{p}$ is the prime field of $K$. It follows that for any indexes $\bk_1,\ldots,\bk_{m}$, there exist some distinct indexes $\fs_{1},\ldots,\fs_{n}$ of weight $w:=\wt(\bk_{1})+\cdots+\wt(\bk_{m})$ and coefficients $a_{1},\ldots,a_{n}\in \FF_{p}$ so that 
\begin{equation}\label{E: MonomialMZVs}
\zeta_{A}(\bk_{1})\cdots\zeta_{A}(\bk_{m})=a_{1}\zeta_{A}(\fs_{1})+\cdots+a_{n}\zeta_{A}(\fs_{n}). 
\end{equation}

For each $\fs_{i}$ above, we let $(G_{\fs_{i}},\rho_{\fs_{i}} )$ be the $t$-module over $A$ and $\bv_{\fs_{i}}\in G_{\fs_{i}}(A)$ be the special point defined in Sec.~\ref{Subsec: fiber coproduct}. Let $Z_{\fs_{i}}$ be given as in Theorem~\ref{T:MainThm}, and note that $\Exp_{G_{\fs_{i}}}(Z_{\fs_{i}})=\bv_{\fs_{i}}$. Recall that $G_{\fs_{i}}$ is the $t$-module associated to the dual $t$-motive $\cM_{\fs_{i}}$ containing $C^{\otimes w}$ as a sub-dual-$t$-motive. We put \[\cM:=\cM_{\fs_{1}}\sqcup_{C^{\otimes w}}\cdots \sqcup_{C^{\otimes w}}\cM_{\fs_{n}} ,\] which is the fiber coprodcut of $\left\{ \cM_{\fs_{i}}\right\}_{i=1}^{n}$ over $C^{\otimes w}$, and let $(G,\rho)$ be the $t$-module over $A$ associated to the dual $t$-motive $\cM$  in Sec.~\ref{Subsec: t-module G}. Using the arguments above, we can write $(G,\rho)$ explicitly as follows. 

We first note that for each $i$, $\rho_{\fs_{i}}(t)$ is a right upper triangular block matrix with $[t]_w$ located at upper left square in the sense that $\rho_{\fs_{i}}(t)$ has the shape of the form 
\[ 
\begin{pmatrix}
[t]_{w}& B_{\fs_{i}}\\
& \rho_{\fs_{i}}(t)'
\end{pmatrix}.
\]
We note that  if $\fs_{i} = (w)$, then $\rho_{\fs_{i}}(t) = [t]_{w}$, or in other words, the terms $B_{\fs_{i}}$ and $\rho_{\fs_{i}}(t)'$ are not present in this case.
Then $\rho(t)$ is given by
\[
\begin{pmatrix}
[t]_{w}&B_{\fs_{1}}&\cdots & B_{\fs_{n}}\\
& \rho_{\fs_{1}}(t)' & &\\
& & \ddots &\\
& & & \rho_{\fs_{n}}(t)'\\
\end{pmatrix}.
\] We further note that there is a natural morphism of $t$-modules 
\[\pi:=\left( \left(  \bz_{\fs_{1}},\dots,\bz_{\fs_{n}} \right) \mapsto \left( \sum_{i=1}^{n} \hat{\bz}_{\fs_{i}}^{\tr},{\bz_{\fs_{1}}^{\tr}}_{-},\ldots, {\bz_{\fs_{n}}^{\tr}}_{-}\right)^{\tr} \right): \bigoplus_{i=1}^{n}G_{\fs_{i}} \rightarrow G
,\] 
where $\hat{\bz}$ and ${\bz}_{-}$ are defined in Definition~\ref{Def: z-} by putting $d_{1}:=w$ there.

Using the methods of fiber coproduct~\cite[Lem.~3.3.2]{CM17b} and Theorem~\ref{T:MainThm}, we can relate the monomial $\zeta_{A}(\bk_{1})\cdots\zeta_{A}(\bk_{m})$ to the $w$-th coordinate of a certain logarithmic vector $Z$ and give explicit formulae for all the other coordinates. Before stating the result, we multiply both sides of (\ref{E: MonomialMZVs}) by $\Gamma_{\fs_{1}}\cdots\Gamma_{\fs_{n}}$ which gives
\begin{equation}\label{E: MonomialMZVs 2}
\Gamma_{\fs_{1}}\cdots\Gamma_{\fs_{n}}\zeta_{A}(\bk_{1})\cdots\zeta_{A}(\bk_{m})=c_{1}\Gamma_{\fs_{1}} \zeta_{A}(\fs_{1})+\cdots+c_{n} \Gamma_{\fs_{n}}\zeta_{A}(\fs_{n}),
\end{equation}
where for each $i$,
\begin{equation}\label{E: ci}
c_{i}:=a_{i}  \prod_{\substack{1\leq j\leq n \\ j\neq i}} \Gamma_{\fs_{j}}\in A
\end{equation}
and denote by $c_{i}(t):=c_{i}|_{\theta=t}\in \FF_{q}[t]$. Finally, we denote
\[Z:=  \partial\pi\left( \partial\rho_{\fs_{1}}(c_{1}(t)) Z_{\fs_{1}},\dots,\partial\rho_{\fs_{n}}(c_{n}(t)) Z_{\fs_{n}}\right)  \in \Lie G(\CC_{\infty}), \]
\[\bv:=\pi\left(\rho_{\fs_{1}}(c_{1}(t))( \bv_{\fs_{1}}),\dots,\rho_{\fs_{n}}(c_{n}(t))( \bv_{\fs_{n}}) \right)\in G(A),\]
so that $\Exp_{G}(Z)=\bv.$  Then, we have the following commutative diagram,

\[
\xymatrix{
{\displaystyle \bigoplus_{i=1}^{n}} G_{\fs_{i}} \ar@{->}[r]^{\ \ \pi}& G  \\
{\displaystyle \bigoplus_{i=1}^{n}} \Lie G_{\fs_{i}} \ar@{->}[r]^{ \ \ \ \partial \pi} \ar[u]^{\oplus_{i=1}^{n}\Exp_{G_{\fs_i}}} & \Lie G \ar[u]_{\Exp_{G}} .
}\]

\begin{theorem}\label{T: Monomial of MZV's}
 Let $\bk_{1},\ldots,\bk_{m}$ be $m$ indexes and put $w:=\wt(\bk_{1})+\cdots+\wt(\bk_{m})$.  Let $\left\{ \fs_{i},c_{i}\right\}_{i=1}^{n}$ be given in (\ref{E: MonomialMZVs 2}) and (\ref{E: ci}), and $Z_{\fs_{i}}$ be given in Theorem~\ref{T:MainThm} for each $i$. Let $G$ be the $t$-module, $Z\in \Lie G(\CC_{\infty})$ be the logarithmic vector and $\bv\in G(A)$ be the special point defined above. Then we have 
\begin{enumerate}
\item
$Z$ is given by
\[
\begin{pmatrix}
 {\displaystyle \sum_{i=1}^{n}}  \widehat{\partial \rho_{\fs_{i}}(c_{i}(t))Z_{\fs_{i}}} \\
\left(\partial \rho_{\fs_{1}}(c_{1}(t))Z_{\fs_{1}}\right)_{-}\\
\vdots\\
\left(\partial \rho_{\fs_{n}}(c_{n}(t))Z_{\fs_{n}}\right)_{-}
\end{pmatrix},
\]where $\partial \rho_{\fs_{i}}(c_{i}(t))Z_{\fs_{i}}$ is explicitly given in Corollary~\ref{C: partial cZ} for each $1\leq i \leq n$ and where we recall the notation from Definition~\ref{Def: z-}. 
\item The $w$-th coordinate of $Z$ is given by $ \Gamma_{\fs_{1}}\cdots\Gamma_{\fs_{n}}\zeta_{A}(\bk_{1})\cdots\zeta_{A}(\bk_{m})$.
\end{enumerate}
\end{theorem}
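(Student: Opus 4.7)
\textbf{Proof plan for Theorem~\ref{T: Monomial of MZV's}.} The plan is to reduce both assertions to a direct unpacking of the definitions of $\pi$, $\partial\pi$, and the explicit formulae for each $\partial\rho_{\fs_i}(c_i(t))Z_{\fs_i}$ supplied by Corollary~\ref{C: partial cZ}, and then apply the monomial identity \eqref{E: MonomialMZVs 2}.

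First, I would establish the block structure of $\partial\pi$ exactly as in Section~\ref{Subsec: fiber coproduct} but with the role of $d_1$ replaced by $w$: since $G$ is the $t$-module associated to the fiber coproduct $\cM := \cM_{\fs_1}\sqcup_{C^{\otimes w}}\cdots\sqcup_{C^{\otimes w}}\cM_{\fs_n}$, the same computation that produced the displayed formula \eqref{E: rho} shows that $\rho(t)$ is the block upper triangular matrix with $[t]_w$ in the upper left and $\rho_{\fs_i}(t)'$ along the remaining diagonal, and that the induced map $\partial\pi\colon\bigoplus_{i=1}^n\Lie G_{\fs_i}\to\Lie G$ on tuples of vectors sums their first $w$ coordinates and concatenates the remaining $(\cdot)_{-}$ parts. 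Applying this description to the tuple $\bigl(\partial\rho_{\fs_1}(c_1(t))Z_{\fs_1},\ldots,\partial\rho_{\fs_n}(c_n(t))Z_{\fs_n}\bigr)$ gives part (1) immediately:
\[
Z = \partial\pi\bigl(\partial\rho_{\fs_1}(c_1(t))Z_{\fs_1},\ldots,\partial\rho_{\fs_n}(c_n(t))Z_{\fs_n}\bigr)
= \begin{pmatrix} \sum_{i=1}^n \widehat{\partial\rho_{\fs_i}(c_i(t))Z_{\fs_i}} \\ (\partial\rho_{\fs_1}(c_1(t))Z_{\fs_1})_{-} \\ \vdots \\ (\partial\rho_{\fs_n}(c_n(t))Z_{\fs_n})_{-} \end{pmatrix},
\]
with each $\partial\rho_{\fs_i}(c_i(t))Z_{\fs_i}$ given explicitly by Corollary~\ref{C: partial cZ}.

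For part (2), I would read off the $w$-th coordinate: by Corollary~\ref{C: partial cZ} the $w$-th (i.e., last) entry of $\widehat{\partial\rho_{\fs_i}(c_i(t))Z_{\fs_i}}$ equals $c_i(\theta)\Gamma_{\fs_i}\zeta_A(\fs_i) = c_i\,\Gamma_{\fs_i}\zeta_A(\fs_i)$. Summing over $i$ and invoking the identity \eqref{E: MonomialMZVs 2} yields
\[
\sum_{i=1}^n c_i\,\Gamma_{\fs_i}\zeta_A(\fs_i) \;=\; \Gamma_{\fs_1}\cdots\Gamma_{\fs_n}\,\zeta_A(\bk_1)\cdots\zeta_A(\bk_m),
\]
which is precisely the claimed value of the $w$-th coordinate of $Z$. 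The identity $\Exp_G(Z)=\bv$ then follows from functoriality of the exponential map applied to the commutative diagram displayed just before the theorem, together with the fact that $\Exp_{G_{\fs_i}}\bigl(\partial\rho_{\fs_i}(c_i(t))Z_{\fs_i}\bigr) = \rho_{\fs_i}(c_i(t))(\bv_{\fs_i})$ for each $i$.

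There is no substantive obstacle here since the statement is a packaging theorem: the only things that must be checked are (a) that the block structure of $\rho(t)$ transcribes correctly from the fiber coproduct construction of Section~\ref{Subsec: fiber coproduct} when $d_1$ is replaced by $w$ and the indexing set runs over $\{\fs_1,\ldots,\fs_n\}$ rather than the indices $\ell$ used there, and (b) that the convention for which block of $\rho_{\fs_i}(t)$ is singled out is consistent with the hypothesis $\wt(\fs_i)=w$ for every $i$. These are essentially bookkeeping issues, handled in the same manner as the proof of Theorem~\ref{T:MainThm}; once they are in place, parts (1) and (2) follow by the direct computation sketched above.
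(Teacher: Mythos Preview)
Your proposal is correct and follows essentially the same approach as the paper: part~(1) is read off from the definition of $\partial\pi$, and part~(2) follows by summing the $w$-th coordinates $c_i(\theta)\Gamma_{\fs_i}\zeta_A(\fs_i)$ (from Corollary~\ref{C: partial cZ}) and invoking \eqref{E: MonomialMZVs 2}. The only cosmetic difference is that the paper packages the summation step by citing \cite[Lem.~3.3.2]{CM17b}, whereas you obtain it directly from the explicit formula in part~(1); these are the same argument.
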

\begin{proof}
The first assertion follows from the definition of $\partial \pi$. To prove the second one, we first note that for each $i$, we have the following.
\begin{itemize}
\item $G_{\fs_{i}}$ comes from the dual $t$-motive $\cM_{\fs_{i}}$, and contains $\bC^{\otimes w}$ as sub-$t$-module.
\item  $\cM$ is the fiber coproduct of $\left\{ \cM_{\fs_{i}}\right\}_{i=1}^{n}$ over $C^{\otimes w}$ and $G$ is its corresponding $t$-module.
\item $\Exp_{G_{\fs_i}}(\partial \rho_{\fs_{i}}(c_{i}(t)) Z_{\fs_{i}})=\rho_{\fs_{i}}(c_{i}(t))\left(\bv_{\fs_{i}}\right)$.
\item The $w$-th coordinate of $\partial \rho_{\fs_{i}}(c_{i}(t))Z_{\fs_{i}}=c_{i}(\theta) \Gamma_{\fs_{i}} \zeta_{A}(\fs_{i})$.
\end{itemize}
So by \cite[Lem.~3.3.2]{CM17b} the $w$-th coordinate of $Z$ is given by $c_{1}\Gamma_{\fs_{1}} \zeta_{A}(\fs_{1})+\cdots+c_{n} \Gamma_{\fs_{n}}\zeta_{A}(\fs_{n})$, which is equal to $\Gamma_{\fs_{1}}\cdots\Gamma_{\fs_{n}}\zeta_{A}(\bk_{1})\cdots\zeta_{A}(\bk_{m})$ by (\ref{E: MonomialMZVs 2}).

\end{proof}

\begin{remark}
We mention that the coefficients $\left\{ a_{i}\right\}$ in (\ref{E: MonomialMZVs}) are shown to exist by Thakur \cite{T10}, and in general we do not know how to write them down explicitly. In the simplest case that $m=2$ and $\dep(\bk_1)=\dep(\bk_2)=1$, Chen has explicit formulae for the coefficients $\left\{ a_{i}\right\}$ in~\cite{Ch15}. Precisely, we have that for any positive integers $s_1$ and $s_2$ with $n:=s_1+s_2$, then
\begin{equation}\label{E:Chen}
\begin{aligned}\zeta_{A}(s_1) \zeta_{A}(s_2)&= \zeta_{A} (s_1,s_2) +  \zeta_{A} (s_2,s_1) + \zeta_{A} (s_1+s_2) \\
&+  \sum_{i+j =n  \atop   (q-1) | j}  \left[(-1)^{s_{1}-1}\binom{j-1}{s_{1}-1}+(-1)^{s_{2}-1}\binom{j-1}{s_{2}-1}  \right] \zeta_{A} (i, j). \ \ \ \
\end{aligned}
\end{equation}

\end{remark}

\end{document}